\newcommand {\N} {\mathbb{N}}
\newcommand {\R} {\mathbb{R}}
\newcommand {\C} {\mathbb{C}}
\newcommand {\Z} {\mathbb{Z}}
\newcommand {\tr} {\text{tr }}
\newcommand{\rb}[1]{{\unskip\nobreak\hfil\penalty 50%
\hspace{1em}\hbox{}\nobreak\hfil\hbox{#1}%
\parfillskip=0pt\finalhyphendemerits=0\par}}
\DeclareRobustCommand{\qed}{\quad\ensuremath{\square}}
\title{A Lie-Group Approach to Rigid Image Registration}
\author{Martin Schr\"oter\footnotemark[1] 
        \and Uwe Helmke\footnotemark[1]
\and Otto Sauer\footnotemark[2]}
\begin{document}

\maketitle

\renewcommand{\thefootnote}{\fnsymbol{footnote}}
\footnotetext[1]{Mathematisches Institut, Universit\"at W\"urzburg, Am Hubland, 97074 W\"urzburg, Germany ({\tt helmke, schroeter@mathematik.uni-wuerzburg.de}).}
\footnotetext[2]{Klinik und Poliklinik f\"ur Strahlentherapie, Josef-Schneider-Str. 11,
97080 W\"urzburg, Germany ({\tt sauer\_o@klinik.uni-wuerzburg.de}).}

\begin{abstract}
The task of image restration is to find the spatial correspondence of two or more given images. In this paper we assume that the correspondence is given either by an Euclidean, or by an affine volume-preserving transformation. Since the registration problem can be seen as an optimization problem on a finite dimensional Lie group, we use a recently developed framework of approximate-Newton methods on manifolds, which leads to locally quadratically convergent algorithms. To reduce numerical costs, we present two strategies: One makes use of the quasi Monte Carlo Method and the other ends up with an algorithm acting on spline function spaces. An extension for multi-modal image registration is given as well.
\end{abstract}

\begin{keywords} 
Image Registration, Newton-like Optimization, B-splines, Quasi Monte Carlo Methods  
\end{keywords}

\pagestyle{myheadings}
\thispagestyle{plain}
\markboth{M. SCHR\"OTER, U. HELMKE, O. SAUER}{A Lie-Group Approach to Rigid Image Registration}

\section{Introduction}
In this paper we study the task of rigid image registration as an optimization problem on a Lie group. Although standard formulations of the problem focus on two- or three-dimensional images, the subsequent mathematical analysis goes through in any dimension. Any gray-scale image is thus identified with its associated intensity function $f:\R^n\rightarrow\R$, which we assume to be a
three times continuously differentiable function with compact support. The rigid image registration task for two such functions $f,g$ then amounts to find the Euclidean transformation $(A,t)$ that minimizes the $L^2$-distance  
\begin{equation}
\label{leastsquares}
\int_{\R^n}\big(f(Ax+t)-g(x)\big)^2dx. 
\end{equation}
Since the set of rigid body transformations $ \rho_{A,t}:\R^n\rightarrow\R^n, x\mapsto Ax+t,$ forms a Lie group, the Euclidean transformation group $SE(n)$, we obtain a least squares optimization on $SE(n)$, which is solved here using appropriate-Newton algorithms. 

Image registration is a fundamental task in image processing, with applications in various fields, including e.g. robotics \cite{Computer} and geophysics \cite{Geophysiks}. For medical image applications, registration is used for image-based treatment planning and image-guided treatment delivery, see e.g. \cite{Moders.-Uerbersicht}, \cite{Viergever-Uerbersicht}, \cite{Braun-Uerbersicht} and the references therein. Although an overwhelming number of publications focuses on non-rigid registration, where the task is to find a non-linear diffeomorphism, rigid registration is still of considerable interest and may lead to good starting point for a subsequent non-rigid registration phase; see e.g. \cite{AnwMed1}, \cite{AnwMed2}, \cite{AnwMed3}, \cite{AnwMed4}, and \cite{AnwMed5}. Often, as in e.g. \cite{Modersitzki}, \cite{Levenberg}, rigid image registration algorithms are designed, that employ fixed local coordinates of the Euclidean group via Euler-angles and then apply standard optimization algorithms on the affine parameter space. This simple local coordinate chart approach however has its drawbacks and may lead to ill-conditioned algorithms at the boundary of the parameter space. For example, in \cite{640-2} it is observed that the singularities inherent in local Euler angle coordinates may reduce the speed of convergence, compared to algorithms acting on the Lie group. Therefore Lee et al propose in \cite{640-2} a linearly convergent Nelder-Mead algorithm on the Lie group $SE(n)$ for multi-modal image registration. In order to achieve faster, local quadratic convergence rates we introduce a new type of approximate-Newton methods on $SE(n)$ that avoids singularities of Euler-angle coordinates. 


Of course, the task of studying Newton's method on Lie groups is not new and has been already applied e.g. to robotics and computer vision problems; see e.g Park~\cite{Park}, H\"uper et al~\cite{EssentialMatrix} and Sastry~\cite{3DVision} for background material. However, such prior work suffers from a number of shortcomings that limit the applicability to image registration problems. The Riemannian Gauss-Newton method \cite{Absil} performs a Newton step via a line search along a geodesic. For the Euclidean rotation group $SO(n)$, such geodesics require the computation of the matrix exponential of a skew-symmetric matrix; which may be a formidable numerical task in high dimensions. Moreover, since the Euclidean group $SE(n)$ carries no natural bi-invariant Riemannian metric, such geodesics are described by solutions to nonlinear second order differential equations which are hard to compute. Even in the case of non-rigid registration, the Lie group structure of the set of diffeomorphisms get a key position in the performance of the algorithms. In \cite{Beg} it is mentioned that incorporating geodesics leads to an increasing of the accuracy. However, calculating the geodesics, which is equivalent to solve a time-varying ODE, is a time consuming procedure and several approaches are known in literature to avoid this step: For example, in \cite{Younes} vector fields are used which fulfill the momentum conservation equation, in \cite{log-Euclid} the authors uses one-parameter subgroups to approximate the geodesics and propose a fast method for calculating the vector field exponential. We refer to \cite{diffeos} or \cite{pennecII} and the reference therein for a further study of the task of non-parametric image registration.


In this paper, we present a new class of approximate Newton algorithms that are taylored to the least squares optimization problem (\ref{leastsquares}) and avoid the above mentioned difficulties.  Following earlier work on Newton's method on manifolds by Helmke and Moore~\cite{Helmke&Moore}, Shub~\cite{Shub:86}, Manton~\cite{Manton}, H\"uper and Trumpf~\cite{640-3}, Helmke, H\"uper and Trumpf~\cite{grassmann} and also Absil~\cite{Absil}, in this paper, we use very simple local parameterizations of the Euclidean transformation group to compute an approximated version of the Hessian and to perform the Newton-step. These previous works mainly deal with minimizing trace-functions on $SO(n)$ or on its homogeneous spaces. In comparison, we study the action of the Euclidean transformation Group on an infinite dimensional vector space and give an extension to the Special Affine Group $SA(n)$. We will show local quadratic convergence of the algorithms under suitable genericity conditions, which also arise in the classical theory of Newton methods on vector spaces. Our algorithm seems to be new even for minimizing standard trace functions. We are not awear of similar algorithms on $SA(n)$.

A bottleneck in implementing such algorithms lies in the difficulty of effectively evaluating the higher dimensional integrals, which may suffer from the curse of dimensionality. In this paper we will present and discuss two different strategies to circumvent this problem. First, we use Quasi Monte--Carlo methods to approximate the integrals by taking samplings of the functions at suitable random points. This leads to an easily implementable algorithm in the case of image registration. 
Second, we used $B$-spline approximations of the images to get an approximation of the integral. The second method actually works better in practice as will be shown by examining concrete medical imaging tasks. Extensions to rigid registrations using volume preserving transformations are given, too.

\section{Local Parameterizations}
To construct a Newton-like algorithm on a Lie Group, we use local parameterizations and coordinate charts. The difference to previous work as e.g. \cite{Modersitzki}, \cite{Levenberg} is that we do not use fixed local coordinates, as our parameterizations change with the iteration points. This offers considerable advantages in designing the algorithm. Recall, that a local parameterization on an $n$-dimensional manifold $M$ is a family  
$\left\lbrace
\mu_p\right\rbrace _{p\in M}$ of smooth maps $\mu_p:\R^n\rightarrow M$
that satisfies $\mu_p(0)=p,~p\in M,$ and defines a local diffeomorphism around $0$. Such local parameterizations exist on every manifold and provide coordinate charts around each point of the manifold. In contrast to usual coordinate charts, local parameterizations vary with each point of the manifold; a trivial fact, that actually helps to simplify the construction of numerical algorithms considerably. On a Riemannian manifold, a standard set of local parameterizations are given by the so-called Riemannian normal coordinates that are defined via the Riemannian exponential map. In the sequel, our local parameterizations have the advantage of being more easily computable than the exponential map, although they may not allow such immediate Riemannian geometry interpretations.

\subsection{Parameterization of Volume Preserving Transformations}
For necessary background on Lie groups and Lie algebras we refer to \cite{Hilgert}. Any affine transformation of $\R^n$ is of the form  $x\mapsto Ax+t$ for a given transformation matrix $A\in \R^{n\times n}$ and a translation vector $t\in\R^n$. Let $SL(n)$ denotes the special linear group of matrices $A\in \R^{n\times n}$ with determinant $1$. Its Lie algebra then is $sl(n)$, the set of $n\times n$-matrices with trace zero. The group of affine, volume preserving transformations than can be identified with the Lie group of all $(n+1)\times (n+1)$-matrices of the form
\begin{eqnarray}
M=\left(\begin{matrix}
 A & t\\
0 & 1
\end{matrix}
\right),\label{Form_affine}
\end{eqnarray}
which define the  ``special affine group'' $SA(n)$. 
By identifying a vector $x\in \R^n$ with its homogenous coordinates 
\begin{eqnarray}
 \bar{x}=\left(
\begin{array}{c}
 x\\
 1
\end{array}
\right),\nonumber
\end{eqnarray}
the affine transformation $\rho_M:x\mapsto Ax+t$ then becomes the linear map 
\begin{eqnarray}
 \bar{x}\mapsto M \bar{x}=\left(
\begin{array}{cc}
 A & t\\
0 & 1
\end{array}
\right) \bar{x}.\nonumber
\end{eqnarray}

Using standard terminology from group theory, this just says that $SA(n)$ is the semidirect product $SL(n)\ltimes\R^n$. The associated Lie algebra $sa(n)$ of $SA(n)$ consists of all matrices
of the form 
\begin{eqnarray}
\left( 
\begin{array}{cc}
 \Omega & v\\
0 & 0
\end{array}
\right) \label{Form_Algebra}
\end{eqnarray}
with the condition that $\tr \Omega =0$ holds. Local parameterizations of $SA(n)$ are then given as 
\begin{align}
\begin{split}
&\mu_M:sl(n)\times \R^n \rightarrow SA(n)\\
&\mu_M(\Omega,v):=M\exp \left( 
\begin{array}{cc}
 \Omega & v\\
0 & 0
\end{array}
\right).
\end{split}
\label{Exp_SAn}
\end{align}

In order to construct a computationally more feasible local parameterization, we consider the first order approximation of this map. Thus we decompose each Lie algebra element of $sl(n)$ as 
\begin{eqnarray}
 X=X_l+X_d+X_u\nonumber
\end{eqnarray}
where $X_d$ is a diagonal matrix and $X_u, X_l$ are strictly upper and lower triangular matrices, respectively. Let $A_Q$ denote the $Q$-factor in the $QR$-decomposition of a matrix $A$. One can easily show that
\begin{eqnarray}
 \theta :sl(n)\rightarrow SL(n),~~~X\mapsto (I+X_l-X_l^{\top})_Q\left[ \exp(X_d)+X_u+X_l^{\top}
\right] \nonumber
\end{eqnarray}
is a first order approximation of $\exp|_{sl(n)}$. This leads to the
system of local parameterization for $SA(n)$
\begin{align}\begin{split}
&\nu_M^{QR}:sl(n)\times\R^n\rightarrow SA(n)\\
 &\nu_M^{QR} 
 (\Omega, v)
:=M\left(
\begin{array}{cc}
 \theta(\Omega) & (I+\frac{1}{2}\Omega)v\\
0 & 1
\end{array}
\right). \end{split} \label{Karte-SA(n)}
\end{align}

An important property of this parameterization which will prove useful in the sequel is that the derivative of $\nu_M^{QR}$ at the origin is the identity map.


\subsection{Parameterization of the Euclidean group}
Every rigid body transformation of $\R^n$ can be decomposed in a rotation around the origin and a translation $x\mapsto Ax+t$. Thus the Euclidean group parameterizes all affine transformations  $\rho_{A,t}(x)= Ax+t$, where $A\in SO(n)$ is a rotation matrix. Here $SO(n)$ denotes the compact Lie group of $n\times n$ real matrices $A$ satisfying $AA^{\top}=A^{\top}A=I$ and $\det A=1$. Using homogenous coordinates, this ``Euclidean transformation group'' becomes identified with the subgroup of $SA(n)$, consisting of all $(n+1)\times (n+1)$ matrices of the form (\ref{Form_affine}) with $A\in SO(n)$. Similarly, the associated Lie Algebra $se(n)$ consists of all matrices of the form (\ref{Form_Algebra}) in which $v\in\R^n$ and $\Omega$ is a skew-symmetric $n\times n$ matrix.

The matrix exponential map $\exp: se(n)\rightarrow SE(n)$ then provides us with a canonical map between the Lie algebra and the Lie group. This leads to the local parameterization around any $M\in SE(n)$ of the form (\ref{Form_affine}) as
\begin{align}\begin{split}
&\mu_M:so(n)\times \R^n \rightarrow SE(n)\\
&\mu_M(\Omega,v):=M\exp \left( 
\begin{array}{cc}
 \Omega & v\\
0 & 0
\end{array}
\right).\end{split}\label{Exp_SEn}
\end{align}

Note, that the computation of the matrix exponential is expensive for large scale matrices  but in the special cases $n=2,3$ explicit formulas such as that by Rodriguez (cf. e.g. \cite{3DVision} p.27) are available. Nevertheless, such explicit formulas are not very useful in an optimization algorithm and the simplified formulas we use is more numerical efficient  even in the low dimensional case. In the sequel, we  approximate $\exp \Omega$ by the the orthogonal part of the QR-factorization of $I+\Omega$. Note, that $I+\Omega$ is invertible for every skew-symmetric matrix $\Omega$, thus, $(I+\Omega)_Q$ is always well defined. This leads the local parameterization of the $SE(n)$ as:
\begin{align}\begin{split}
&\nu_M^{QR}:so(n)\times \R^n\rightarrow SE(n)\\
& \nu_M^{QR} (\Omega,v):=M\left( 
\begin{array}{cc}
 (I+\Omega)_Q & (I+\frac{1}{2}\Omega)v\\
0 & 1
\end{array}
\right). \end{split}\label{Karte-SE(n)}
\end{align} 

Note, that this map coincides with the previous map for $SA(n)$, when restricted to $so(n)\times \R^n$. In particular, $D\nu_M^{QR}(0)=id$ holds, i.e. $\nu_M^{QR}$ is locally diffeomorphic around $0$ and a valid first order approximation of the exponential map.


\section{Quasi-Newton Method}
In this section, we propose a novel approximate-Newton algorithm for image registration that is based on the above local parameterizations. Our construction differs essentially from the well-known Riemannian Newton algorithm, which is based on knowledge of the geodesics to calculate the Hessian. Since the geodesics in  $SA(n)$ are available only implicitly via the solutions of complicated nonlinear second-order differential differential equation, we prefer to avoid the Riemannian Newton method. Instead, we adapt a version of the approximate-Newton method as developed by Shub \cite{Shub:86} and H\"uper and Trumpf \cite{640-3}, which has been already successfully applied for several optimization problems (see e.g.~\cite{grassmann}).

Let $\left\lbrace  \mu_M\right\rbrace _{M\in G}$ be a set of local parameterizations of a Lie Group $G$. Thus, $\mu_M$ is defined on an open neighborhood $U\subset\R^n$ of $0\in U$ such that $\mu_M:U\rightarrow G$ is diffeomorphic with $\mu_M(0)=M$. Additionally, we assume that $\mu(M,x):=\mu_M(x)$ is a smooth map. Let $\left\lbrace \nu_M\right\rbrace _{M\in G}$ be another set of local parameterizations, subject to the same conditions. The $(\mu,\nu)$ Newton-iteration on $G$ for a smooth objective function $\Phi:G\rightarrow \R$ then is defined as
\begin{equation}
 M_{k+1}=\nu_{M_k}\left( -\left(
     \mbox{Hess}_{\Phi\circ\mu_{M_k}}\left( 0\right) \right) ^{-1}\nabla
   ({\Phi\circ\mu_{M_k}}) \left( 0\right) \right),~~~~~~M_0\in G, \label{newton-G}
\end{equation}
where $\nabla h(0)$ and $\mbox{Hess}_h(0)$ denote the standard gradient and Hesse-operator of a smooth function $h:\R^n\to\R$, respectively. The iteration in (\ref{newton-G}) can be decomposed into the calculation of the Newton-step $d=-( \mbox{Hess}_{\Phi\circ\mu_{M_k}}(0)) ^{-1}\nabla ({\Phi\circ\mu_{M_k}})(0)$ and subsequent application of the local parameterization $M_{k+1}=\nu_{M_k}(d)$. The local parameterizations $\left\lbrace  \mu_M\right\rbrace _{M\in G}$ are used in (\ref{newton-G}) to calculate a classical Newton-step in Euclidean coordinates of $\R^n$. The second parameterization $\left\lbrace \nu_M\right\rbrace _{M\in G}$ acts as a retraction of the tangent space onto the manifold, to carry out the actual Newton step. A practical choice of $\mu$ would be via the Riemannian exponential map, while for the retraction $\nu$ any first order approximation of the exponential map would be sufficient. 
Local quadratic convergence of this method has been recently established; see \cite{640-3}, \cite{grassmann}.

In the case of the image-registration problem, $G$ is either the Euclidean transformation group $SE(n)$ or the Special Affine Group $SA(n)$. Moreover, $\left\lbrace  \mu_M\right\rbrace$ is chosen as the exponential map, while (\ref{Karte-SE(n)}), (\ref{Karte-SA(n)}) are chosen for the retraction map $\left\lbrace  \nu_M\right\rbrace_{M\in G}$ in $SA(n)$ and $SE(n)$, respectively. In this combination, neither the Riemannian nor the matrix exponential map have to be evaluated in a point different from zero. In order to reduce the numerical costs, we use the QR-factorizations in (\ref{Karte-SE(n)}) and (\ref{Karte-SA(n)}) for the update part of the iteration. The cost function to maximize on the Euclidean motion groups $G=SA(n),SE(n)$ is $\Phi:G\rightarrow \R$
\begin{equation}
\Phi(A,t):=\int_{\R^n}^{}f(Ax+t)g(x)dx.\label{Zielfunktion}
\end{equation}
By invariance of the integral under volume preserving maps, this function differs from the least squares index (\ref{leastsquares}) by a constant. In particularly, maximization of $\Phi$ is equivalent to minimization of (\ref{leastsquares}).



\subsection{Calculation of Gradient and Hessian}
The aim of this section is to compute the Newton-iteration (\ref{newton-G}) for the objective function~(\ref{Zielfunktion}). In each iteration-step, we have to consider the function $\Phi\circ\mu_M$ and its first and second derivatives.

\begin{lemma} Let $G$ denote either the Euclidean and affine Lie group $SE(n)=SO(n)\ltimes \R^n$, $SA(n)=SL(n)\ltimes\R^n$, respectively. Let $\mu_M$ denote the local parameterizations (\ref{Exp_SAn}), (\ref{Exp_SEn}) and let $\Phi$ denote the objective function (\ref{Zielfunktion}). Endow the Lie algebras $\frak{g}=sa(n),se(n)$ with their standard Euclidean inner product. For a fixed $M\in G$, we get:  \\
\begin{itemize}
 \item [(a)] The gradient of $\Phi\circ\mu$ in $0$ is $\nabla(\Phi\circ\mu_M)(0,0)=(\tilde{\Omega},\tilde{v})$ with 
\begin{align}
 \tilde{\Omega}=\int\limits_{\R^n}g_M(z)\pi_{\frak{k}}(\nabla f(z)z^{\top})dx & & \tilde{v}=\int\limits_{\R^n}g_M(z)\nabla f(z)dz.\label{Gradient}
\end{align}
Here $g_M:=g\circ\rho_{M^{-1}}$ and $\pi_{\frak{k}}$ denotes the projection from $gl(n)$ to the Lie algebra $\frak{k}=so(n)$ for $G=SE(n)$ and $\frak{k}=sl(n)$ for $G=SA(n)$, respectively:
\begin{align}
\pi_{so(n)}(X):=\frac{1}{2}(X-X^{\top}) & & \pi_{sl(n)}(X):=X-\frac{\tr X}{n}I_n.\nonumber 
\end{align}

\item [(b)] The Hessian operator $\mbox{Hess}_{\Phi\circ\mu_M}(0):\frak{g}\rightarrow \frak{g} $ of $\Phi\circ\mu_M$ at a critical point $M\in G$ is $\mbox{Hess}_{\Phi\circ\mu_M}(0)(\Omega,v)=(\hat{\Omega},\hat{v}) $ with
\begin{align}\begin{split}
&\hat{\Omega}
=\pi_{\frak{k}}
\left(  \frac{1}{2}\Omega^{\top}\int\limits_{\R^n}\nabla f(z)z^{\top}g_M(z)dz +\frac{1}{2}\int\limits_{\R^n}\nabla f(z)z^{\top}g_M(z)dz\Omega^{\top}\right.\\
&\left. +\int\limits_{\R^n}\mbox{H}_f(z)\Omega zz^{\top}g_M(z)dz
+\int\limits_{\R^n}\mbox{H}_f(z)vz^{\top}g_M(z)dz\right)\\
&\hat{v}=\int\limits_{\R^n} \mbox{H}_f(z)g_M(z)(\Omega z+v)dz 
\end{split}\label{SimpleHessian}
\end{align}
where $\mbox{H}_f$ denotes the matrix representation of the Hessian of $f$.   
\end{itemize}
\end{lemma}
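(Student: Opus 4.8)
The plan is to reduce the whole statement to a second-order Taylor expansion of a scalar integral and then read off the gradient and Hessian componentwise. By the homomorphism property $\rho_{MN}=\rho_M\circ\rho_N$ of the group action together with the measure-preserving change of variables $z=\rho_M(x)$ (the Jacobian is $1$ since $\det A=1$), the fixed transformation $M$ is absorbed into $g$, turning it into $g_M=g\circ\rho_{M^{-1}}$, while the perturbation $\exp\xi$ remains acting on the argument of $f$; thus $\Phi(\mu_M(\Omega,v))=\int_{\R^n} f\big(\rho_{\exp\xi}(z)\big)\,g_M(z)\,dz$, where $\xi$ denotes the Lie-algebra element with blocks $(\Omega,v)$. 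The remaining task is purely local: expand the smooth map $(\Omega,v)\mapsto f(\rho_{\exp\xi}(z))$ about the origin. Differentiation under the integral sign is legitimate because $f\in C^3$ has compact support, so all integrands are uniformly dominated on the relevant parameter neighbourhood.

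For part (a) I only need the first-order expansion of the exponential: the top-left block of $\exp\xi$ is $I+\Omega+O(2)$ and its top-right block is $v+O(2)$, so $\rho_{\exp\xi}(z)=z+(\Omega z+v)+O(2)$. The chain rule gives the directional derivative $\int_{\R^n}\nabla f(z)^{\top}(\Omega z+v)\,g_M(z)\,dz$, which I split into its $\Omega$- and $v$-parts. The $v$-part is already a linear form in $v$ with coefficient vector $\int g_M\nabla f\,dz$, giving $\tilde v$ directly. For the $\Omega$-part I rewrite $\nabla f(z)^{\top}\Omega z=\tr\big((z\nabla f(z)^{\top})\Omega\big)$ and convert this linear functional on $\frak k$ into a gradient \emph{vector} via the trace inner product. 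Since $\Omega$ is constrained to $\frak k$, only the $\frak k$-component of the coefficient matrix survives, and the orthogonal projection for the standard Euclidean inner product is exactly $\pi_{\frak k}$; this yields $\tilde\Omega=\int\pi_{\frak k}\big(\nabla f(z)z^{\top}\big)g_M\,dz$, with $\pi_{so(n)}$ (antisymmetric part) and $\pi_{sl(n)}$ (trace-free part) appearing as the respective projections.

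For part (b) I carry the expansion to second order. There are exactly two sources of quadratic terms: the quadratic term of $\exp\xi$, which contributes the displacement $\tfrac12\Omega(\Omega z+v)$ paired against $\nabla f$, and the genuine Hessian term $\tfrac12\,\delta_1^{\top}H_f(z)\,\delta_1$ with $\delta_1=\Omega z+v$. I collect the resulting quadratic form, polarize it to the symmetric bilinear form, and identify $\mbox{Hess}(0)(\Omega,v)$ by pairing against test directions. The critical-point hypothesis streamlines the answer: part (a) being zero means $\int g_M\nabla f\,dz=0$ and $\int\pi_{\frak k}(\nabla f z^{\top})g_M\,dz=0$, and these relations annihilate precisely the stray cross terms (e.g. $\int\nabla f^{\top}\Omega v\,g_M\,dz=(\int g_M\nabla f\,dz)^{\top}\Omega v=0$) and collapse the $\nabla f$-contribution to the two symmetric pieces $\tfrac12\Omega^{\top}(\cdot)+\tfrac12(\cdot)\Omega^{\top}$. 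The $H_f$-term splits into the pure-$\Omega$ piece $\int H_f\Omega zz^{\top}g_M$, the cross piece $\int H_f vz^{\top}g_M$ feeding $\hat\Omega$, and $\int H_f g_M(\Omega z+v)$ feeding $\hat v$; a final application of $\pi_{\frak k}$ to the matrix output gives the stated $\hat\Omega$.

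The main obstacle I anticipate is the second-order bookkeeping in (b): correctly separating the three quadratic sources, polarizing, and—above all—turning each linear functional on the \emph{constrained} algebra $\frak k$ into an element of $\frak k$ with the right symmetrization and projection. Verifying that the critical-point conditions eliminate exactly the extraneous terms (and no others) is the delicate check, whereas the change of variables and differentiation under the integral, enabled by $\det A=1$ and the compact support of $f$, are routine.
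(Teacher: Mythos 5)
Your proposal is correct and takes essentially the same route as the paper: it computes the first and second directional derivatives of $\Phi\circ\mu_M$ along $\tau\mapsto(\tau\Omega,\tau v)$ (the quadratic term of $\exp$ contributing $\nabla f(z)^{\top}\Omega(\Omega z+v)$), uses the volume-preserving substitution $z=Ax+t$ to produce $g_M$, identifies the gradient via the trace inner product and the projection $\pi_{\frak{k}}$, and polarizes the second derivative at a critical point where the $\nabla f(z)^{\top}\Omega v$ term vanishes because $\tilde v=0$. Performing the change of variables before rather than after differentiating is the only, purely cosmetic, difference.
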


\begin{proof}
We calculate the directional derivative of $\Phi\circ\mu_M$ as:
\begin{align}
 \frac{d}{d\tau}\Phi\circ\mu_M(\tau \Omega,\tau v)&=\int\limits_{\R^n}\nabla 
f\left( P\exp(\tau
\Omega_0) 
M \bar{x}
\right)^{\top} 
P \Omega_0
\exp\left( \tau \Omega_0\right)
M\bar{x}
g(x)dx\label{one}\\
 \mbox{with }  \Omega_0=\left( 
\begin{array}{cc}
 \Omega & v\\
0 & 0
\end{array}
\right)
&,~~M=
\left( 
\begin{array}{cc}
 A & t\\
0 & 0
\end{array}
\right)
\mbox{ and }
P=\left( I_n~0\right) \in\R^{n\times (n+1)} 
\nonumber\\
\frac{d}{d\tau}\Big|_{\tau=0}\Phi\circ\mu_M(\tau\Omega,\tau v)&=\int\limits_{\R^n}\nabla f(Ax+t)^{\top}\left( \Omega(Ax+t)+v\right) g(x)dx \label{two}\\
\frac{d^2}{d\tau^2}\Big|_{\tau=0}\Phi\circ\mu_M(\tau\Omega,\tau v)&=\int\limits_{\R^n}\left( \Omega(Ax+t)+v\right)^{\top}\mbox{H}_f(Ax+t) \left( \Omega(Ax+t)+v\right)g(x)dx\nonumber\\
 &+\int\limits_{\R^n}\nabla f(Ax+t)^{\top}\left( \Omega^2(Ax+t)+\Omega v\right) g(x)dx \label{three}
\end{align}
After substituting $z=Ax+t$ we get
\begin{align}
 \frac{d}{d\tau}\Big|_{\tau=0}\Phi\circ\mu_M(\tau\Omega,\tau v)&=\tr\left[ \int\limits_{\R^n}z\nabla f(z)^{\top}g_M(z)dz\Omega\right]+\left\langle \int\limits_{\R^n}\nabla f(z)g_M(z)dz,v\right\rangle _{\R^n}.\nonumber
\end{align}
Since the gradient ($\tilde{\Omega},\tilde{v})$ is the unique vector of the tangent space with 
\begin {align}
 \frac{d}{d\tau}\Big|_{\tau=0}\Phi\circ\mu_M(\tau\Omega,\tau v)=\tr(\Omega^{\top}{}\tilde{\Omega})+v^{\top}\tilde{v} \nonumber
\end{align}
we have proved (\ref{Gradient}).

To calculate the Hessian of $\Phi\circ\mu_M$ in zero, we again substitude $z=Ax+t$ in formula (\ref{three}) and get 
\begin{align}
 \frac{d^2}{d\tau^2}\Big|_{\tau=0}\Phi\circ\mu_M(\tau\Omega,\tau v)&=\int\limits_{\R^n}(\Omega z+v)^{\top}\mbox{H}_f(z)(\Omega z+v)g_M(z)dz\nonumber\\
&+\int\limits_{\R^n}\nabla f(z)^{\top}\Omega^2zg_M(z)dz
+\int\limits_{\R^n}\nabla f(z)^{\top}g_M(z)dz\Omega v.\nonumber
\end{align}
Since the last summand is equal to $\tilde{v}\Omega v$ it vanishes in a critical point. Therefore, we optain the Hessian $\mathcal{H}$ by polarizing the two first summands 
\begin{eqnarray}
 \mathcal{H}_{\Phi\circ \mu_M(0)}(\Omega,v)(\hat{\Omega},\hat{v})&=&\int\limits_{\R^n}(\Omega z+v)^{\top}\mbox{H}_f(z)(\hat{\Omega} z+\hat{v})g_M(z)dz\nonumber\\
&+&\frac{1}{2}\tr\left[
  \left(\int\limits_{\R^n}z\nabla f(z)^{\top}g(A^{-1}(z-t))dz\right)(\Omega\hat{\Omega}+\hat{\Omega}\Omega)\right] \nonumber
\end{eqnarray}
which proves (\ref{SimpleHessian}).
\end{proof}

Note, that (\ref{SimpleHessian}) yields the Hessian of $\Phi\circ\mu_M$ in $0$ only at a critical point $M\in G$. In the sequel, we will use the same formula at an arbitrary point $M\in G$ and thus obtain a modified Newton algorithm for $\Phi$. 
Thus, the modified Newton-step in (\ref{newton-G}) requires to solve the following system of linear equations:
\begin{align}
 \int\limits_{\R^n}\mbox{H}_f(z)g_M(z)(\Omega z+v)dz
=-\int\limits_{\R^n}g_M(z)\nabla f(z)dz\label{Newton-pura}\\
\intertext{and}
\pi_{\frak{k}} 
\left(\frac{1}{2}\Omega^{\top}\int\limits_{\R^n}\nabla f(z)z^{\top}g_M(z)dz +\frac{1}{2}\int\limits_{\R^n}\nabla f(z)z^{\top}g_M(z)dz\Omega^{\top}\right.\nonumber\\
+\left. \int\limits_{\R^n}\mbox{H}_f(z)\Omega zz^{\top}g_M(z)dz
+\int\limits_{\R^n}\mbox{H}_f(z)vz^{\top}g_M(z)dz)\right)\label{Newton-purb}\\
=-\int\limits_{\R^n}g_M(z)\pi_{\frak{k}}(\nabla f(z)z^{\top})dz\nonumber
\end{align}
with the unknowns $v\in\R$ and $\Omega\in\mathfrak{g}$.

In order to rewrite (\ref{Newton-pura}) and (\ref{Newton-purb}) in a linear equation in the components $v_i$ and $\Omega_{i,j},$ we first focus on the Euclidean transformation group. Here, with $\Omega \in so(n)$ we obtain:
\begin{align}
 & \int\limits_{\R^n}\mbox{H}_f(z)g_M(z)(\Omega z+v)dz
=-\int\limits_{\R^n}g_M(z)\nabla f(z)dz\label{nra}\\
\intertext{and}
 & \frac{1}{2}\int\limits_{\R^n}\left(-\Omega \nabla f(z)z^{\top} - z\nabla f(z)^{\top}\Omega\right) g_M(z)dz \nonumber\\
&\frac{1}{2}\int\limits_{\R^n}\left( -\nabla f(z)z^{\top}\Omega -\Omega z\nabla f(z)^{\top}\right) g_M(z)dz  \nonumber\\
&+\int\limits_{\R^n}\left( \mbox{H}_f(z)vz^{\top} -zv^{\top}\mbox{H}_f(z)\right) g_M(z)dz\label{nrb}\\
&+\int\limits_{\R^n}\left( \mbox{H}_f(z)\Omega zz^{\top}+ zz^{\top}\Omega\mbox{H}_f(z)\right) g_M(z)dz\nonumber\\
&~~~~~~~~~~~~~~~~=-\int\limits_{\R^n}g_M(z)(\nabla f(z)z^{\top}-z\nabla f(z)^{\top})dz.\nonumber
\end{align}

To evaluate the components of this system of linear equations, we use the abbreviations:
\begin{align}
&\alpha_i=\int\limits_{\R^n}g(x)\frac{\partial f}{\partial x_i}(Ax+t)dx, ~~~~~~~~~~~~~~~~~~~~~~~\beta_{i,j}=\int\limits_{\R^n}g(x) (Ax+t)_j\frac{\partial f}{\partial x_i}(Ax+t)dx, \nonumber\\
& \gamma_{i,j,k}=\int\limits_{\R^n}g(x)(Ax+t)_i\frac{\partial^2
  f}{\partial x_j\partial x_k}(Ax+t)dx,~~~\epsilon_{i,j}=\int\limits_{\R^n}g(x)\frac{\partial^2
  f}{\partial x_i\partial x_j}(Ax+t)dx,\label{coefs} \\
& \delta_{i,j,k,l}=\int\limits_{\R^n}g(x) (Ax+t)_i(Ax+t)_j\frac{\partial^2
  f}{\partial x_k\partial x_l}(Ax+t)dx\nonumber
\end{align}
We obtain:

\begin{lemma}\label{th:QMC_SE(N)}
Let $(\Omega,v)\in so(n)\times \R^n$ be the modified Newton-direction for the objective function (\ref{Zielfunktion}) in a certain point $M\in SE(n)$. Then the components $\Omega_{k,l},$ $1\leqslant k,l\leqslant n$ of $\Omega$ and $v_k$ $1\leqslant k\leqslant n$ of $v$ satisfy
 \begin{align}
 \sum\limits_{k>l}(\gamma_{l,k,i}-\gamma_{k,l,i})\Omega_{k,l}+\sum\limits_k\epsilon_{i,k}v_k=-\alpha_i\label{begin_newton_SE(n)}
\end{align}
for $1\leqslant i\leqslant n$ and
\begin{align}
 \frac{1}{2}\sum\limits_{k>j}(\beta_{i,k}+\beta_{k,i})\Omega_{k,j}-\frac{1}{2}\sum\limits_{k<j}(\beta_{i,k}+\beta_{k,i})\Omega_{j,k}
-\frac{1}{2}\sum\limits_{k>i}(\beta_{j,k}+\beta_{k,j})\Omega_{k,i}\nonumber\\
+\frac{1}{2}\sum\limits_{k<i}(\beta_{j,k}+\beta_{k,j})\Omega_{i,k}-\sum\limits_{k>l}(\delta_{i,k,l,j}-\delta_{j,l,k,i}+\delta_{i,l,k,j}-\delta_{i,k,l,j})\Omega_{k,l}\label{end_newton_SE(n)}\\
-\sum\limits_{k}(\gamma_{j,k,i}-\gamma_{i,k,j})v_k
=\beta_{i,j}-\beta_{j,i}\nonumber
\end{align}
for $1\leqslant i<j\leqslant n$.
\end{lemma}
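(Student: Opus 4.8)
The plan is to derive both scalar systems directly from the matrix equations (\ref{nra}) and (\ref{nrb}) by reading off their entries and substituting the definitions (\ref{coefs}). Throughout I will use two structural facts. First, $\Omega\in so(n)$ is skew-symmetric, so $\Omega_{k,l}=-\Omega_{l,k}$, $\Omega_{k,k}=0$, and only the entries with $k>l$ are independent. Second, the Hessian matrix $\mathrm{H}_f$ is symmetric, which induces the index symmetries $\gamma_{i,j,k}=\gamma_{i,k,j}$, $\epsilon_{i,j}=\epsilon_{j,i}$, and $\delta_{i,j,k,l}=\delta_{i,j,l,k}=\delta_{j,i,k,l}$. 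Matching the integrals in (\ref{coefs}) to matrix entries is routine once one observes that $(\nabla f\, z^\top)_{i,j}=(\nabla f)_i z_j$ integrates to $\beta_{i,j}$, that $\int g_M z_i(\mathrm{H}_f)_{j,k}\,dz=\gamma_{i,j,k}$ and $\int g_M(\mathrm{H}_f)_{i,j}\,dz=\epsilon_{i,j}$, and that the double-$z$ factor appearing in the fourth block of (\ref{nrb}) produces $\delta_{i,j,k,l}=\int g_M z_i z_j(\mathrm{H}_f)_{k,l}\,dz$.

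First I would treat the translational equation (\ref{begin_newton_SE(n)}), which is the easy half. Taking the $i$-th component of the vector equation (\ref{nra}), the product $\mathrm{H}_f(z)(\Omega z+v)$ expands to $\sum_{k,l}(\mathrm{H}_f)_{i,k}\Omega_{k,l}z_l+\sum_k(\mathrm{H}_f)_{i,k}v_k$; integrating against $g_M$ turns the two pieces into $\sum_{k,l}\gamma_{l,i,k}\Omega_{k,l}$ and $\sum_k\epsilon_{i,k}v_k$, while the right-hand side is $-\alpha_i$. The double sum is then folded onto its independent entries: splitting into $k>l$ and $k<l$, relabelling, and applying $\Omega_{k,l}=-\Omega_{l,k}$ together with the Hessian symmetry $\gamma_{l,i,k}=\gamma_{l,k,i}$ collapses it to $\sum_{k>l}(\gamma_{l,k,i}-\gamma_{k,l,i})\Omega_{k,l}$, which is exactly (\ref{begin_newton_SE(n)}).

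The rotational equation (\ref{end_newton_SE(n)}) is obtained in the same way but is considerably more laborious. Here both sides of (\ref{nrb}) are skew-symmetric matrices, so I extract the $(i,j)$-entry only for $i<j$. The four integral blocks on the left contribute to disjoint families of coefficients: the two blocks built from $\nabla f$ and a single $\Omega$ give the $\beta$-weighted $\Omega$-sums, the block $\mathrm{H}_f v z^\top-z v^\top\mathrm{H}_f$ gives the $\gamma$-weighted $v$-sums, the block $\mathrm{H}_f\Omega z z^\top+z z^\top\Omega\mathrm{H}_f$ gives the $\delta$-weighted $\Omega$-sums, and the right-hand side $-\int g_M(\nabla f\, z^\top-z\nabla f^\top)$ gives $\pm(\beta_{i,j}-\beta_{j,i})$. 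Writing out each matrix product in coordinates, for instance $(\Omega\nabla f z^\top)_{i,j}=\sum_a\Omega_{i,a}(\nabla f)_a z_j\mapsto\sum_a\beta_{a,j}\Omega_{i,a}$ and $(\mathrm{H}_f\Omega z z^\top)_{i,j}=\sum_{a,b}(\mathrm{H}_f)_{i,a}\Omega_{a,b}z_b z_j\mapsto\sum_{a,b}\delta_{b,j,i,a}\Omega_{a,b}$, and then adding the transposed/symmetric partners produces the raw double sums.

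The final, and main, obstacle is purely combinatorial: reducing these double sums over all index pairs to sums over the independent directions, and reorganising them into the precise ranges $k>j$, $k<j$, $k>i$, $k<i$, $k>l$ that appear in (\ref{end_newton_SE(n)}). This requires repeatedly substituting $\Omega_{k,l}=-\Omega_{l,k}$, which accounts for the alternating signs and for the splitting of each $\beta$-sum into a $k>j$ and a $k<j$ part (and likewise in $i$), and repeatedly collapsing the $\delta$-terms with the symmetry of $\mathrm{H}_f$ and the exchange symmetry $\delta_{i,j,k,l}=\delta_{j,i,k,l}$ of the two $z$-indices so as to merge the four summands into $\delta_{i,k,l,j}-\delta_{j,l,k,i}+\delta_{i,l,k,j}-\delta_{i,k,l,j}$. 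The bookkeeping of which index attaches to which integral, and the tracking of the overall sign (the identity is read off from (\ref{nrb}) only up to the uniform sign induced by the projection $\pi_{so(n)}$, as visible already in the right-hand side being written $\beta_{i,j}-\beta_{j,i}$), is where essentially all the risk of error lies. No idea beyond these two symmetries is needed: the statement is a faithful transcription of (\ref{nra})--(\ref{nrb}) into scalar coordinates.
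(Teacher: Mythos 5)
Your proposal is correct and follows exactly the route the paper intends: the paper states the lemma with no written proof beyond presenting (\ref{nra})--(\ref{nrb}) and the abbreviations (\ref{coefs}) and saying ``we obtain,'' i.e.\ the content is precisely the componentwise expansion you describe, using the skew-symmetry of $\Omega$ to restrict to independent entries $k>l$ and the symmetry of $\mathrm{H}_f$ to identify the $\gamma$- and $\delta$-coefficients. Your identifications (e.g.\ $\int g_M z_l (\mathrm{H}_f)_{i,k}\,dz=\gamma_{l,i,k}=\gamma_{l,k,i}$ and the collapse to $\sum_{k>l}(\gamma_{l,k,i}-\gamma_{k,l,i})\Omega_{k,l}$) check out, so the remaining work is only the bookkeeping you already flag.
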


Note that the unknowns of this system are $v_i$ and $\Omega_{i,j}$ for $i>j$. Therefore, a unique solution of the linear system corresponds to an unique element of the $so(n)$.

Let us return to the case of volume-preserving transformations $G=SA(n)$. The modified Newton-equation (\ref{Newton-pura}) and (\ref{Newton-purb}) has now the form:
\begin{align}
& \int\limits_{\R^n}\mbox{Hess}_f(z)g_M(z)(\Omega z+v)dz
=-\int\limits_{\R^n}g_M(z)\nabla f(z)dz\label{nbc}\\
\intertext{and}
& \frac{1}{2}\Omega^{\top}\int\limits_{\R^n}\nabla f(z)z^{\top}g_M(z)dz +\frac{1}{2}\int\limits_{\R^n}\nabla f(z)z^{\top}g_M(z)dz\Omega^{\top}\nonumber \\
+ &\int\limits_{\R^n}\mbox{Hess}_f(z)\Omega zz^{\top}g_M(z)dz
+\int\limits_{\R^n}\mbox{Hess}_f(z)vz^{\top}g_M(z)dz\nonumber\\
- &\frac{1}{n}I\int\limits_{\R^n}g_M(z)\left( z^{\top}\Omega^{\top}\nabla f(z)+z^{\top}\Omega\mbox{Hess}_f(z)z+z^{\top}\mbox{Hess}_f(z)v\right)dz\label{nbd}\\
 & ~~~~~~~=-\int\limits_{\R^n}g_M(z)\nabla f(z)z^{\top}dz+\frac{1}{n}I\int\limits_{\R^n}g_M(z)z^{\top}\nabla f(z)dz.\nonumber
\end{align}
Again, we can calculate the components of this system using the coefficients in (\ref{coefs}). We end up with an analog version of lemma \ref{th:QMC_SE(N)} in the case of a volume preserving transformations.

\begin{lemma}
Let $(\Omega,v)\in sl(n)\times \R^n$ be the modified Newton-direction for the objective function (\ref{Zielfunktion}) in a certain point $M\in SA(n)$. Then the components $\Omega_{k,l},$ $ 1\leqslant k,l\leqslant n,~(k,l)\neq (n,n)$ of $\Omega$ and $v_k$ of $v$ satisfy for each $1\leqslant i\leqslant n$ 
\begin{align}
 \sum\limits_{k\neq l}\gamma_{l,k,i}\Omega_{k,l}+\sum\limits_{k\neq n}(\gamma_{k,k,i}-\gamma_{n,n,i})\Omega_{k,k}+\sum\limits_{k}\epsilon_{i,k}v_k=-\alpha_i\label{f0}
\end{align}
 and for all $1\leqslant i,j\leqslant n$, $(i,j)\neq(n,n)$ the following equations: 
\begin{align}\label{f1}\begin{split}
 \frac{1}{2}\sum\limits_k\beta_{i,k}\Omega_{j,k}+\frac{1}{2}\sum\limits_{k}\beta_{k,j}\Omega_{k,i}
+\sum\limits_{(k,l)\neq (n,n)}\delta_{j,l,k,i}\Omega_{k,l} \\
-\delta_{j,n,n,i}\sum\limits_{k\neq n}\Omega_{k,k} 
+\sum\limits_k\gamma_{j,k,i}v_k
=-\beta_{i,j}\end{split}
\end{align}
for $i\neq n,~j\neq n,~i\neq j$,
\begin{align}\begin{split}
 \frac{1}{2}\sum\limits_k\beta_{n,k}\Omega_{j,k}+\frac{1}{2}\sum\limits_{k\neq n}\beta_{k,j}\Omega_{k,n}
+\sum\limits_{(k,l)\neq (n,n)}\delta_{j,l,k,n}\Omega_{k,l} \\
-(\delta_{j,n,n,i}+\frac{1}{2}\beta_{n,j})\sum\limits_{k\neq n}\Omega_{k,k} 
+\sum\limits_k\gamma_{j,k,n}v_k
=-\beta_{n,j}\end{split}
\end{align}
for $i=n,~j\neq n$,
\begin{align}\begin{split}
 \frac{1}{2}\sum\limits_{k\neq n}\beta_{i,k}\Omega_{n,k}+\frac{1}{2}\sum\limits_{k}\beta_{k,n}\Omega_{k,i}
+\sum\limits_{(k,l)\neq (n,n)}\delta_{n,l,k,i}\Omega_{k,l} \\
-(\delta_{n,n,n,i}+\frac{1}{2}\beta_{i,n})\sum\limits_{k\neq n}\Omega_{k,k} 
+\sum\limits_k\gamma_{n,k,i}v_k
=-\beta_{i,n}\end{split}
\end{align}
for $j=n,~i\neq n$ and
\begin{align}
 \frac{1}{2}\sum\limits_k\beta_{i,k}\Omega_{j,k}+\frac{1}{2}\sum\limits_{k}\beta_{k,j}\Omega_{k,i}
+\!\!\sum\limits_{(k,l)\neq (n,n)}\!\!\left( \delta_{j,l,k,i}-\frac{1}{n}\left( 
\beta_{k,l}+\sum\limits_{m}\delta_{l,m,k,m}
\right) \right) \Omega_{k,l}\nonumber \\
-\left( \delta_{j,n,n,i}
-\frac{1}{n}\left( 
\beta_{n,n}+\sum\limits_{m}\delta_{n,m,n,m}
\right)
\right)\sum\limits_{k\neq n} \Omega_{k,k} 
+\sum\limits_k\left( \gamma_{j,k,i}
-\frac{1}{n}\sum\limits_l \gamma_{l,l,k}
\right) v_k\nonumber\\
=-\beta_{i,j}+\frac{1}{n}\sum\limits_{k}\beta_{k,k}\label{f6}
\end{align}
for $i=j,~i,j\neq n$.
\end{lemma}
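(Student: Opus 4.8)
The plan is to repeat, for the $sl(n)$-valued system (\ref{nbc})--(\ref{nbd}), the component-wise expansion that produced Lemma~\ref{th:QMC_SE(N)}. After the substitution $z=Ax+t$, which is measure preserving since $\det A=1$ on $SA(n)$, every integral occurring in (\ref{nbc}) and (\ref{nbd}) is one of the scalars recorded in (\ref{coefs}). Hence the entire argument reduces to reading off the entries of the matrix and vector equations in terms of $\alpha,\beta,\gamma,\delta,\epsilon$, using only the symmetry $(\mbox{H}_f)_{k,l}=(\mbox{H}_f)_{l,k}$ (which makes $\gamma_{i,j,k}$ symmetric in $j,k$ and $\delta_{i,j,k,l}$ symmetric in $i,j$ and in $k,l$) and the trace constraint $\tr\Omega=0$, i.e. $\Omega_{n,n}=-\sum_{k\neq n}\Omega_{k,k}$.

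First I would dispose of the vector equation (\ref{nbc}). Its $i$-th component, after writing $(\Omega z+v)_k=\sum_l\Omega_{k,l}z_l+v_k$ and integrating, is $\sum_{k,l}\gamma_{l,k,i}\Omega_{k,l}+\sum_k\epsilon_{i,k}v_k=-\alpha_i$, the Hessian entry being recorded as $\gamma_{l,k,i}$. Separating the diagonal part $\sum_k\gamma_{k,k,i}\Omega_{k,k}$ and eliminating $\Omega_{n,n}$ by the trace constraint replaces it by $\sum_{k\neq n}(\gamma_{k,k,i}-\gamma_{n,n,i})\Omega_{k,k}$, which is exactly (\ref{f0}).

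The bulk of the work lies in the matrix equation. I would compute the $(i,j)$-entry of each of the four summands on the left of (\ref{nbd}) before projection: with $X:=\int_{\R^n}\nabla f(z)z^\top g_M(z)\,dz=(\beta_{a,b})_{a,b}$ one gets $\tfrac12(\Omega^\top X)_{i,j}=\tfrac12\sum_k\beta_{k,j}\Omega_{k,i}$ and $\tfrac12(X\Omega^\top)_{i,j}=\tfrac12\sum_k\beta_{i,k}\Omega_{j,k}$; the term $\int\mbox{H}_f\Omega zz^\top g_M$ contributes $\sum_{k,l}\delta_{j,l,k,i}\Omega_{k,l}$ after reindexing via the symmetries of $\delta$; the term $\int\mbox{H}_f vz^\top g_M$ contributes $\sum_k\gamma_{j,k,i}v_k$; and the right-hand side $-\int\nabla f(z)z^\top g_M$ contributes $-\beta_{i,j}$. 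This is already (\ref{f1}) once $\Omega_{n,n}$ is removed from the $\delta$-sum, provided $i\neq n$, $j\neq n$ and $i\neq j$: then neither of the two $\beta$-sums reaches the index $\Omega_{n,n}$, and since $i\neq j$ the projection leaves the entry unchanged.

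The four cases of the lemma now arise from two bookkeeping effects, and isolating them cleanly is the only real obstacle. When $i=n$ (resp.\ $j=n$), the sum $\sum_k\beta_{k,j}\Omega_{k,i}$ (resp.\ $\sum_k\beta_{i,k}\Omega_{j,k}$) does contain the term $\beta_{n,j}\Omega_{n,n}$ (resp.\ $\beta_{i,n}\Omega_{n,n}$), and substituting $\Omega_{n,n}=-\sum_{k\neq n}\Omega_{k,k}$ produces the extra $-\tfrac12\beta_{n,j}$ (resp.\ $-\tfrac12\beta_{i,n}$) attached to $\sum_{k\neq n}\Omega_{k,k}$ seen in the stated equations. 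Independently, the projection $\pi_{sl(n)}$ alters only the diagonal: for $i\neq j$ the $(i,j)$-entry of $\pi_{sl(n)}(Y)$ equals $Y_{i,j}$, while for $i=j$ one must subtract $\tfrac1n\tr Y$. Computing this trace in coordinates gives $\tr Y=\sum_{k,l}\big(\beta_{k,l}+\sum_m\delta_{l,m,k,m}\big)\Omega_{k,l}+\sum_k\big(\sum_l\gamma_{l,l,k}\big)v_k$, and the analogous trace $\sum_k\beta_{k,k}$ of the right-hand side matrix supplies the $+\tfrac1n\sum_k\beta_{k,k}$; together these account for all the $\tfrac1n$-corrections in (\ref{f6}). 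Assembling these contributions with the $\Omega_{n,n}$-elimination carried out consistently in every term yields the remaining equations, so that there is no conceptual difficulty beyond the careful, simultaneous tracking of indices, Hessian symmetries and the trace constraint.
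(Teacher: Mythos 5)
Your proposal is correct and coincides with what the paper intends: the paper states the lemma without a written proof, remarking only that the components of (\ref{nbc})--(\ref{nbd}) are obtained from the coefficients (\ref{coefs}), and your component-wise expansion using the symmetries of $\mbox{H}_f$, the trace constraint $\Omega_{n,n}=-\sum_{k\neq n}\Omega_{k,k}$, and the diagonal correction from $\pi_{sl(n)}$ is exactly that omitted computation, carried out accurately.
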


We want to point out that the presented Newton step uses an approximated version of the Hessian. However, at each critical point of the cost function we have an exact evaluation of the Hessian. Approximations of the Hessian appear also in a couple of classical algorithms like the Gauss-Newton method, the Levenberg Marquard algorithm (see e.g. \cite{Absil} for an overview) or the optimization technique presented in \cite{Malis}. In contrast to our method, such algorithms do not perform a Newton step at a critical point and are not necessarily local quadratic convergent.



\subsection{The Quasi-Monte-Carlo Newton Algorithm}
%
%
Even though the previous subsection defines itself an iterative algorithm, a few specifications have to be made in order to apply it to the image registration task.\\
\indent
First, we note that the most time-consuming part of the algorithms is the calculation of the integrals appearing in the coefficients $\alpha,\ldots,\epsilon$ in (\ref{coefs}) -- if they can be calculated at all.
One way out is to approximate the integrals via Quasi Monte-Carlo methods. (See for example \cite{Niederreiter} for an introduction.) Thus, we replace in (\ref{coefs}) the integrals by the average of sampled function-values via

\begin{eqnarray}
 \int\limits_{Q\subset\R^n}f(x)dx & \approx & \frac{1}{N}\sum\limits_{i=1}^{N}f(x_i).\label{QM}
\end{eqnarray}

Explicitly, we use

\begin{align}
&\alpha_i\!=\!\frac{1}{N}\sum\limits_{r=1}^{N}g(x_r)\frac{\partial f}{\partial
  x_i}(Ax_r+t), 
& &\!\!\!\!\!\!\!\!\!\!\!\!\!\!\!\!\!\!\!\!\!\!\!\!\!\!\!\!\!\!\!\!\!\!\!\!\!\!\!\!\!\!\!\!\!\!\!\!\! \beta_{i,j}\!=\!\frac{1}{N}\sum\limits_{r=1}^N g(x_r) \frac{\partial f}{\partial x_i}(Ax_r+t)(Ax_r+t)_j, \nonumber\\
& \gamma_{i,j,k}\!=\!\frac{1}{N}\sum\limits_{r=1}^N g(x_r)(Ax_r+t)_i\frac{\partial^2
  f}{\partial x_j\partial x_k}(Ax_r+t), 
& &\!\!\!\!\!\!\!\!\!\!\!\!\!\!\!\!\!\!\!\!\!\!\!\!\!\!\!\!\!\! \epsilon_{i,j}\!=\!\frac{1}{N}\sum\limits_{r=1}^Ng(x_r)\frac{\partial^2
  f}{\partial x_i\partial x_j}(Ax_r+t),\label{disccoefs}\\[-0.4cm]
& \delta_{i,j,k,l}\!=\!\frac{1}{N}\sum\limits_{r=1}^Ng(x_r) (Ax_r+t)_i(Ax_r+t)_j\frac{\partial^2
  f}{\partial x_k\partial x_l}(Ax_r+t).
 & &\nonumber
\end{align}
The final registration algorithms for $G=SE(n)$ and $G=SA(n)$ are summarized in the Tables 3.1 and 3.2.

\begin{figure}[t]
\begin{minipage}[m]{0.3\textwidth}
\fbox{
\parbox{12.5cm}{
\textbf{Table 3.1: QMC-Newton Registration-Algorithm on SE(n)}\\[0.5cm]
\small
Step 1.\\
Pick an initial guess $M_0\in SE(n)$ and set $m=0$.\\[0.5cm]
Step 2.\\
Calculate $\alpha_i,~\beta_{i,j},~\gamma_{i,j,k},~\delta_{i,j,k,l}$ and $ \epsilon_{i,j}$ for all $1\leqslant i,j,k,l\leqslant n$ as defined in equation~(\ref{disccoefs}).\\[0.5cm]
Step 3.\\
Solve the linear system consisting of the equations
\begin{align}
 \sum\limits_{k>l}(\gamma_{l,k,i}-\gamma_{k,l,i})\Omega_{k,l}+\sum\limits_k\epsilon_{i,k}v_k=-\alpha_i~~~\mbox{for all}~1\leqslant i\leqslant n\nonumber
\intertext{and}
 \sum\limits_{k>j}\frac{1}{2}(\beta_{i,k}+\beta_{k,i})\Omega_{k,j}-\sum\limits_{k<j}\frac{1}{2}(\beta_{i,k}+\beta_{k,i})\Omega_{j,k}
-\sum\limits_{k>i}\frac{1}{2}(\beta_{j,k}+\beta_{k,j})\Omega_{k,i}\nonumber\\
+\sum\limits_{k<i}\frac{1}{2}(\beta_{j,k}+\beta_{k,j})\Omega_{i,k}-\sum\limits_{k>l}(\delta_{i,k,l,j}-\delta_{j,l,k,i}+\delta_{i,l,k,j}-\delta_{i,k,l,j})\Omega_{k,l}\nonumber\\
-\sum\limits_{k}(\gamma_{j,k,i}-\gamma_{i,k,j})v_k=\beta_{i,j}-\beta_{j,i}~~~\mbox{for all}~1\leqslant i<j\leqslant n\nonumber
\end{align}

with the unknowns $v_i$ and $\Omega_{i,j}$, $j>i$.\\[0.5cm]

Step 4.\\
Construct the $n\times n$ matrix $\Omega$ with the entries $\Omega_{i,j}$. In the case of $j>i$ use the solution of step 3. Else, set
\begin{eqnarray}
      \Omega_{i,j}=\left\lbrace 
\begin{array}{cc}
 -\Omega_{j,i} & \mbox{for~} j<i\\
 0 & \mbox{for~} j=i.
\end{array}
\right. \nonumber
    \end{eqnarray}
Compute 
\begin{eqnarray}
 M_{m+1}:=\nu_{M_m}^{QR}(\Omega,v),\nonumber
\end{eqnarray}
where $\nu^{QR}$ is defined in (\ref{Karte-SE(n)}).\\[0.5cm]

Step 5.\\
Set $m=m+1$ and goto Step 2.\\
~
}}

\end{minipage}
\end{figure}

In contrast to a classical Monte-Carlo method, the sampling-points $x_i\in Q$ are chosen to be uniform distributed in $Q$. The so-called Halton sequence~\cite{Halton} is a good example of such uniformly distributed sampling points. It is shown in the literature, that for such well chosen sampling points, the approximation error of (\ref{QM}) is bounded by $O((\log N)^n/N)$. (In contrast, the error of the Monte Carlo method tends to zero with the order $O(1/\sqrt{N})$.) The final algorithms are presented in Table 3.1 on $SE(n)$ and in Table 3.2 on $SA(n)$ respectively.

Before stating our main convergence result, a few remarks are in order.
We use standard Gauss elimination to solve the linear system in step 3 of each algorithm. If this system is not solvable, a standard approach in optimization theory is to search for the least squares solution. Moreover, as is the case for all Newton methods, convergence of the algorithm is not guaranteed for an arbitrary initial conditions $M_0\in G$. Even if the algorithm converges, the limiting point need not be a local maximum. To overcome this, one can adapt a Gauss-Newton step, or first test if the Newton-direction $(\Omega,v)$ is an ascent-direction. Alternatively, we can take the gradient of the objective function instead. Furthermore, one can make a line-search in the ascent-direction, e.g. by using the Amijo-rule. We skip the straightforward details.\indent

\begin{theorem}
Suppose $g\in C(\R^n,\R)$ and $f\in C^3(\R^n,\R)$. Then the QMC-Newton algorithms described in Table 3.1 and Table 3.2 being applied to the two following cost functions 
\begin{itemize}
 \item [(a)] $\Phi:G\to\R$ in (\ref{Zielfunktion}) with coefficients (\ref{coefs}) and for the 
\item [(b)] $\Psi:G\to\R$ definied by
\begin{equation}
\Psi(A,t):= \frac{1}{N}\sum_{r=1}^{N}f(Ax_r+t)g(x_r).\label{appsum}
\end{equation}
 with coefficients (\ref{disccoefs}),
\end{itemize}
are locally quadratically convergent around each nondegenerate critical point. 
\end{theorem}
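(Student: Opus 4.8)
The plan is to reduce both claims to the established local quadratic convergence of the exact $(\mu,\nu)$ Newton iteration (\ref{newton-G}) (see \cite{Shub:86}, \cite{640-3}, \cite{grassmann}). The starting observation is that (b) is merely (a) applied to a second smooth objective: the discrete coefficients (\ref{disccoefs}) are precisely the coefficients (\ref{coefs}) computed for the finite average $\Psi$ of (\ref{appsum}) instead of the integral $\Phi$ of (\ref{Zielfunktion}), since replacing the weighted Lebesgue integral $\int_{\R^n}(\cdot)\,g(x)\,dx$ by the empirical average $\frac1N\sum_r(\cdot)\,g(x_r)$ in (\ref{Zielfunktion}) turns $\Phi$ into $\Psi$ and, termwise, the gradient and Hessian formulas (\ref{Gradient})--(\ref{SimpleHessian}) into their discrete counterparts. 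It therefore suffices to fix a single $C^{3}$ cost function $F\in\{\Phi,\Psi\}$ and prove local quadratic convergence of the corresponding iteration toward a nondegenerate critical point of $F$; note that in case (b) this is convergence to a critical point of the \emph{discrete} functional $\Psi$ and uses nothing about the quality of the Quasi--Monte--Carlo approximation.

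First I would secure the smoothness on which the whole argument rests. As $f\in C^{3}(\R^n,\R)$ and $g$ is continuous with compact support, differentiation under the integral sign (legitimate because the integrand has compact support in $x$) shows $\Phi\in C^{3}(G,\R)$; for the finite sum $\Psi$ this is immediate. Hence $F\circ\mu_M$ is $C^{3}$ near $0\in\frak g$ for every $M$, its gradient and Hessian $\mbox{Hess}_{F\circ\mu_M}(0)$ exist and depend in a $C^{1}$ fashion on $M$, and they are given by (\ref{Gradient}) and (\ref{SimpleHessian}). I would then identify the algorithm's step map: Steps 3--4 solve the systems (\ref{begin_newton_SE(n)})--(\ref{end_newton_SE(n)}) (respectively (\ref{f0})--(\ref{f6})), which encode the modified equations (\ref{Newton-pura})--(\ref{Newton-purb}), and apply the retraction $\nu_{M_m}^{QR}$ of (\ref{Karte-SE(n)})/(\ref{Karte-SA(n)}). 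This defines a smooth map $s_{\mathrm{mod}}(M)=\nu_M^{QR}\bigl(-\mathcal A(M)^{-1}\nabla(F\circ\mu_M)(0)\bigr)$ of the same form as (\ref{newton-G}) but with the true Hessian replaced by the modified operator $\mathcal A(M)$ read off from (\ref{SimpleHessian}); here $\mu_M$ is the exponential parameterization (\ref{Exp_SEn})/(\ref{Exp_SAn}) and $D\nu_M^{QR}(0)=\mbox{id}$ as recorded after (\ref{Karte-SE(n)}).

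The heart of the proof is the comparison of $s_{\mathrm{mod}}$ with the \emph{exact} Newton step $s_{\mathrm{exact}}$ that uses $\mbox{Hess}_{F\circ\mu_M}(0)$ and whose local quadratic convergence is the cited result. By the derivation of (\ref{SimpleHessian}) the two operators differ only by a summand proportional to the gradient component $\tilde v$ of (\ref{Gradient}); consequently $\mathcal A(M)-\mbox{Hess}_{F\circ\mu_M}(0)$ vanishes at any critical point $M_*$. Since the gradient itself vanishes at $M_*$, both step maps fix $M_*$, and a direct differentiation shows that the extra, gradient--proportional discrepancy does not affect the first derivative there, so $s_{\mathrm{mod}}$ and $s_{\mathrm{exact}}$ agree to first order at $M_*$. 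Nondegeneracy of $M_*$ means $\mbox{Hess}_{F\circ\mu_{M_*}}(0)=\mathcal A(M_*)$ is invertible, so near $M_*$ the linear systems of Step 3 are uniquely solvable and Gauss elimination returns the genuine direction. As local quadratic convergence of $s_{\mathrm{exact}}$ is equivalent to the vanishing of its tangent map $T_{M_*}s_{\mathrm{exact}}=0$, the first-order agreement forces $T_{M_*}s_{\mathrm{mod}}=0$ as well, which yields local quadratic convergence of the algorithm. Taking $F=\Phi$ gives (a) and $F=\Psi$ gives (b).

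The step I expect to be the main obstacle is exactly this comparison: one must verify carefully that the gradient--proportional difference $\mathcal A(M)-\mbox{Hess}_{F\circ\mu_M}(0)$ contributes nothing to the first derivative of the step map at $M_*$, which combines the identity $\mathcal A(M_*)=\mbox{Hess}_{F\circ\mu_{M_*}}(0)$ with the vanishing of the gradient and with the $C^{1}$-dependence of all data on $M$. This last continuity is precisely why $f\in C^{3}$ rather than merely $C^{2}$ is demanded: it renders the objective's Hessian locally Lipschitz and thereby controls the second-order Taylor remainder underlying the quadratic estimate.
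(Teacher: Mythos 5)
Your proposal is correct and follows the same skeleton as the paper's proof: reduce part (b) to part (a) by observing that sampling commutes with differentiation so that (\ref{disccoefs}) are exactly the coefficients of the gradient and Hessian of $\Psi$, verify that the cost function is $C^3$ and that $D\mu_M(0)=D\nu_M(0)$, and invoke the local quadratic convergence theorem for $(\mu,\nu)$-Newton iterations from \cite{640-3}, \cite{grassmann}. The one substantive place where you do more than the paper is the explicit treatment of the modified Hessian: the paper simply declares the iteration to be a $(\mu,\nu)$-Newton algorithm and delegates everything to the cited theorem, whereas you spell out that the operator $\mathcal{A}(M)$ actually implemented in Step 3 differs from $\mbox{Hess}_{F\circ\mu_M}(0)$ by a gradient-proportional term, coincides with it at a critical point, and therefore produces a step map whose tangent map still vanishes at $M^{\ast}$. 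This is a genuine gap-filling improvement, since the convergence theorem as stated in the paper's proof is applied to the exact Newton iteration while the algorithm runs the modified one. Two small cautions: the vanishing of $T_{M_*}s$ by itself is equivalent only to superlinear, not quadratic, convergence --- you need (and do eventually invoke, via $f\in C^3$) a Lipschitz bound on the derivative of the step map near $M^{\ast}$ to upgrade this to a quadratic rate, so the sentence claiming the equivalence should be weakened; and the claim that the discrepancy $\mathcal{A}(M)-\mbox{Hess}_{F\circ\mu_M}(0)$ is ``proportional to $\tilde v$'' deserves one line of verification from the polarization identity in the proof of Lemma 3.1, where the discarded term is exactly $\tilde v^{\top}\Omega v$. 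Neither issue affects the validity of the overall argument.
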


\begin{proof} 
To begin with, let us first focus on the cost function $\Phi$. We observe that the algorithms in Table 3.1 describes a $(\mu,\nu)-$Newton algorithm on $G=SE(n)$, where $\mu$ and $\nu$ are defined in (\ref{Exp_SEn}) and (\ref{Karte-SE(n)}). Respectively, Table 3.2 applies a $(\mu,\nu)-$Newton algorithm on $G=SA(n)$ where $\mu$ and $\nu$ are defined in (\ref{Exp_SAn}) and (\ref{Karte-SA(n)}). These parametrizations satisfy
\begin{align}
 D\mu_{M}(0)=D\nu_{M}(0),\label{cond_mu_nu}
\end{align}
for all $M\in G$. Moreover, the cost function $\Phi$ is in $C^3(G,\R)$ since $g\in C(\R^n,\R)$ and $f\in C^3(\R^n,\R)$. Thus, we can apply the local quadratic convergence theorem from \cite{640-3} or \cite{grassmann}: There exists an open neighborhood $V\subset G$ of each critical point $M^{\ast}\in G$ such that the point sequence $\left\lbrace M_k\right\rbrace_{k\in\N_0}$ generated by the algorithms in Table 3.1 or Table 3.2, converges quadratically to $M^{\ast}$, if $M_0\in V$. This proves (a).

To prove (b), we use the fact that the process of differentiation with respect to $M$ and the process of sampling in (\ref{QM}) commute. Hence, we obtain the gradient of $\Psi$ if we apply the approximation (\ref{QM}) to the right hand side of (\ref{nrb})-(\ref{nra}) for $G=SE(n)$ and of (\ref{nbc})-(\ref{nbd}) for $G=SA(n)$ respectively. In the same manner, we get the Hessian in a critical point by approximating the left side of these equations. Therefore, using (\ref{disccoefs}) in (\ref{begin_newton_SE(n)})-(\ref{end_newton_SE(n)}) and (\ref{f0})-(\ref{f6}), respectively, yield again a $(\mu,\nu)-$Newton step and the local quadratic convergence is obtained by the same argument as for the function~$\Phi$.
\end{proof}

A few remarks are in order: Firstly, the assumption that $f$ is three times continious differentiable is only needed to ensure the local quadratic convergence behavior. The QMC-Newton algorithms need only evaluations of the first and second derivative of $f$. Moreover, since the raw image data are usually given in a discreticed form, a previous interpolation step is needed to extend the function continously. If the interpolation order is high enough, then $f\in C^3(\R^n,\R)$ can always be guaranteed (see section 4.1).

In \cite{640-3}, sufficient conditions for local quadratic convergence are: A $C^3$ cost function, condition (\ref{cond_mu_nu}) and the non-degenericity of the critical points. We will show in the appendix that the last condition is fulfilled for a generic choice of the images.

Note, that the cost function (\ref{appsum}) is only an approximation of (\ref{Zielfunktion}); a more sensible choice would be the discretization of the least squares as
\begin{equation}  
\sum_{r=1}^{N}(f(Ax_r+t)-g(x_r))^2.\label{ssd-diskret}
\end{equation}
The above local quadratic convergence result would hold as well for suitably adapted choices of coefficients.

\begin{figure}[t]
\begin{minipage}[m]{0.3\textwidth}
\fbox{
\parbox{12.5cm}{
\textbf{Table 3.2: QMC-Newton Registration-Algorithm on SA(n)}\\[0.5cm]
\small
Step 1.\\
Pick an initial guess $M_0\in SA(n)$ and set $m=0$.\\[0.5cm]
Step 2.\\
Calculate $\alpha_i,~\beta_{i,j},~\gamma_{i,j,k},~\delta_{i,j,k,l}$ and $ \epsilon_{i,j}$ for all $1\leqslant i,j,k,l\leqslant n$ as defined in equation~(\ref{disccoefs}).\\[0.5cm]
Step 3.\\
Solve the linear system described in (\ref{f0}) - (\ref{f6}) with the unknowns $v_i$ and $\Omega_{i,j}$, $(i,j)\neq (n,n)$.\\[0.5cm]
Step 4.\\
Construct the $n\times n$ matrix $\Omega$ with the entries $\Omega_{i,j}$. In the case of $(j,i)\neq (n,n)$ use the solution of step 3. Else, set
\begin{eqnarray}
\Omega_{n,n}=-\sum\limits_{k \neq n} \Omega_{k,k}\nonumber
\end{eqnarray}
and compute 
\begin{eqnarray}
 M_{m+1}:=\nu_{M_m}^{QR}(\Omega,v),\nonumber
\end{eqnarray}
where $\nu^{QR}$ is defined in (\ref{Karte-SA(n)}).\\[0.5cm]
Step 5.\\
Set $k=k+1$ and goto Step 2.
}}\\[0.5cm]
\end{minipage}
\end{figure}

The local quadratic convergence of the algorithms will be numerically confirmed in section 5. However,  this property  has more advantages the bigger the region in which the algorithms converge quadratically is.
In Fig.~\ref{glaetten} it is demonstrated that this region can be very small if $f$ and $g$ are interpolations of raw medical images. (The region of quadratic convergence is a subset of the region in which the cost function is convex.) A way out is to smooth the image up to a certain level and to registrate the smoothed image data instead. The result can then be taken as the initial guess for the algorithms applied to  less smoothed images etc. Throughout this paper, we perform the smoothing by projecting the images to a finite dimensional function-space, namely a spline-basis (see section 4 for details). Fig.~\ref{glaetten} shows the change of the objective function by varying the degree of smoothness. In all the examples we considered, the smoothing of the images yields an increase of the domain of quadratic convergence.


\begin{figure}[t]
\begin{minipage}[l]{0.3\textwidth}
\centering
\includegraphics[height=3.5cm]{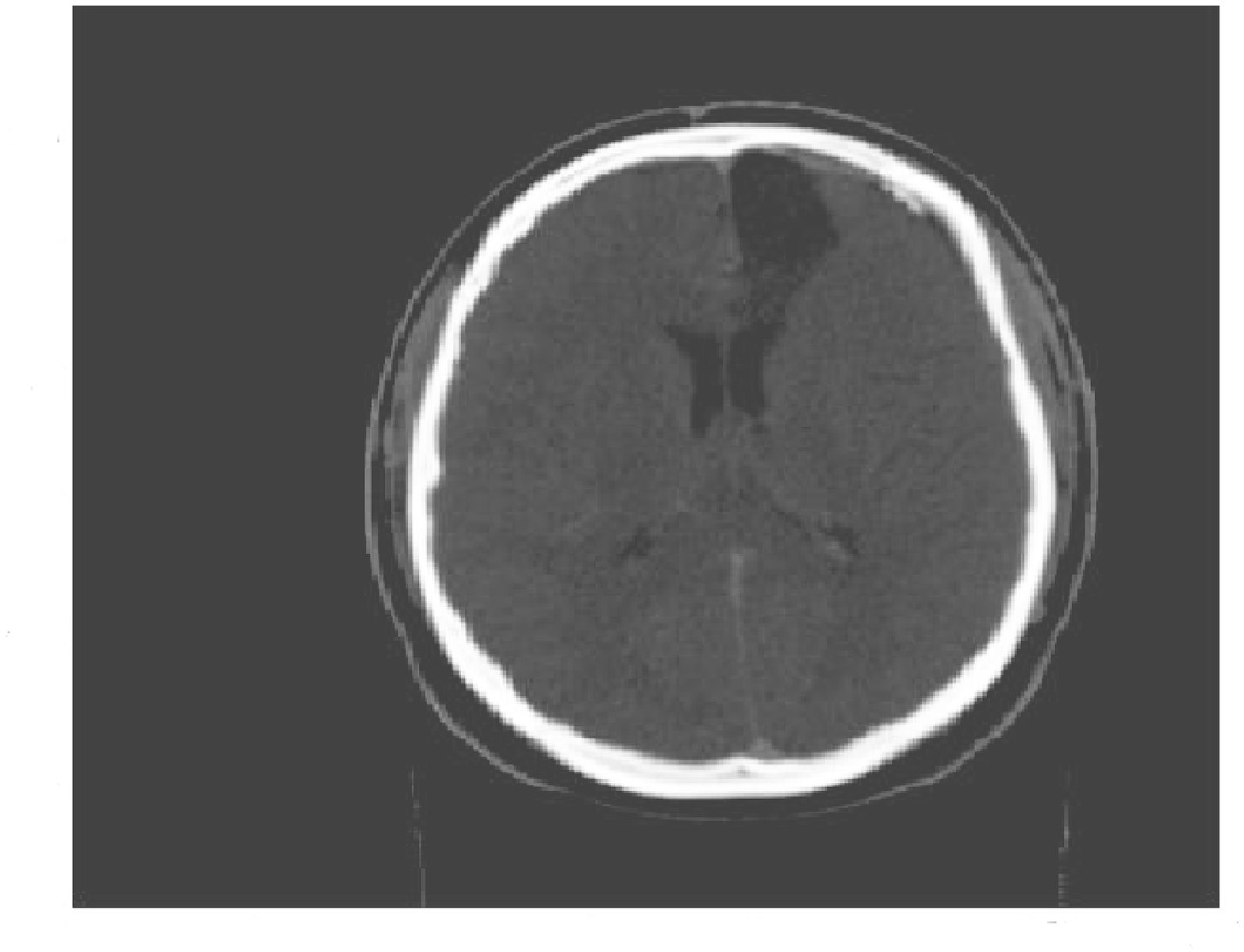}
\end{minipage}
\begin{minipage}[m]{0.3\textwidth}
\centering
~~\includegraphics[height=3.5cm]{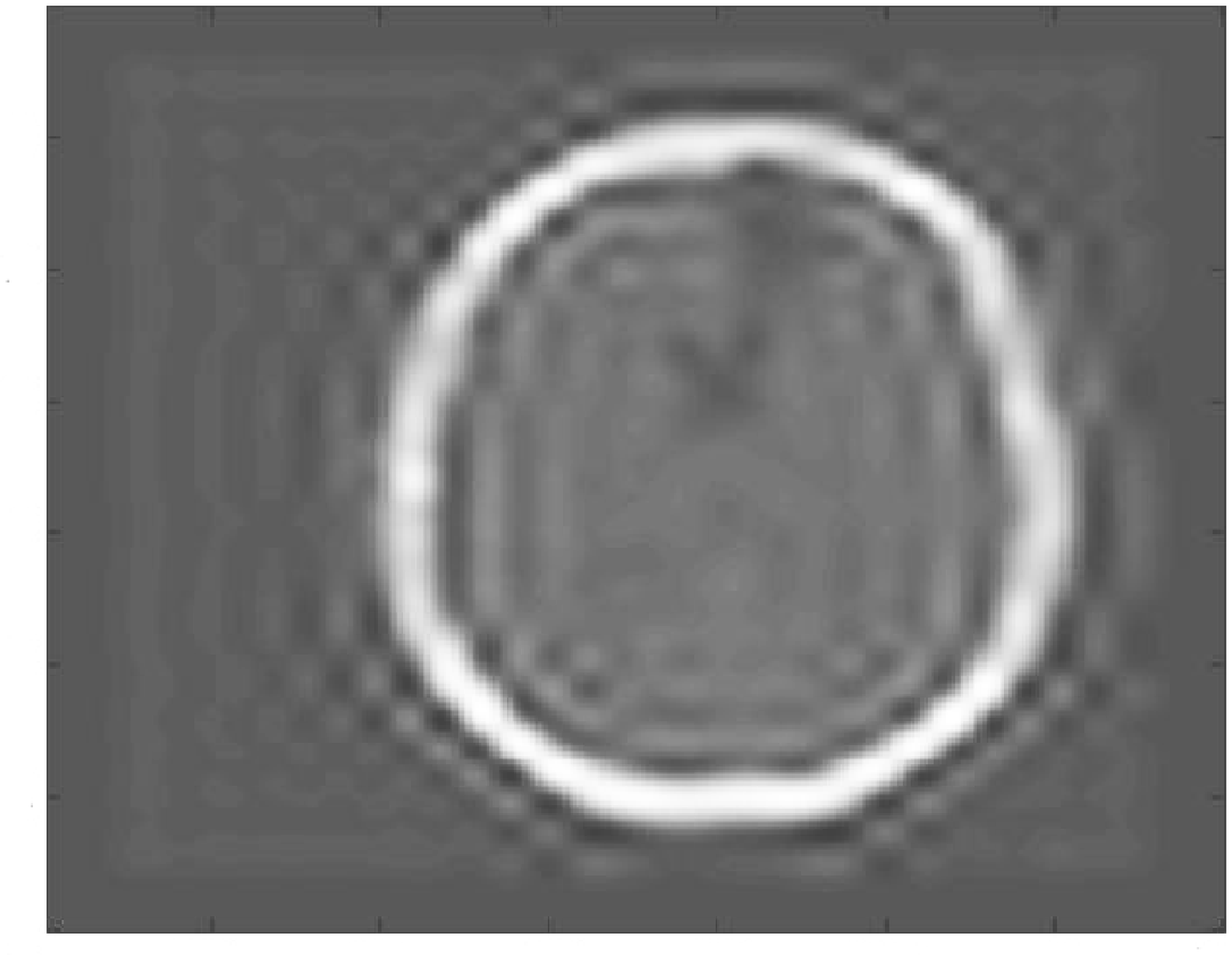}
\end{minipage}
\begin{minipage}[r]{0.3\textwidth}
\centering
~~~~~~\includegraphics[width=4.5cm,height=3.5cm]{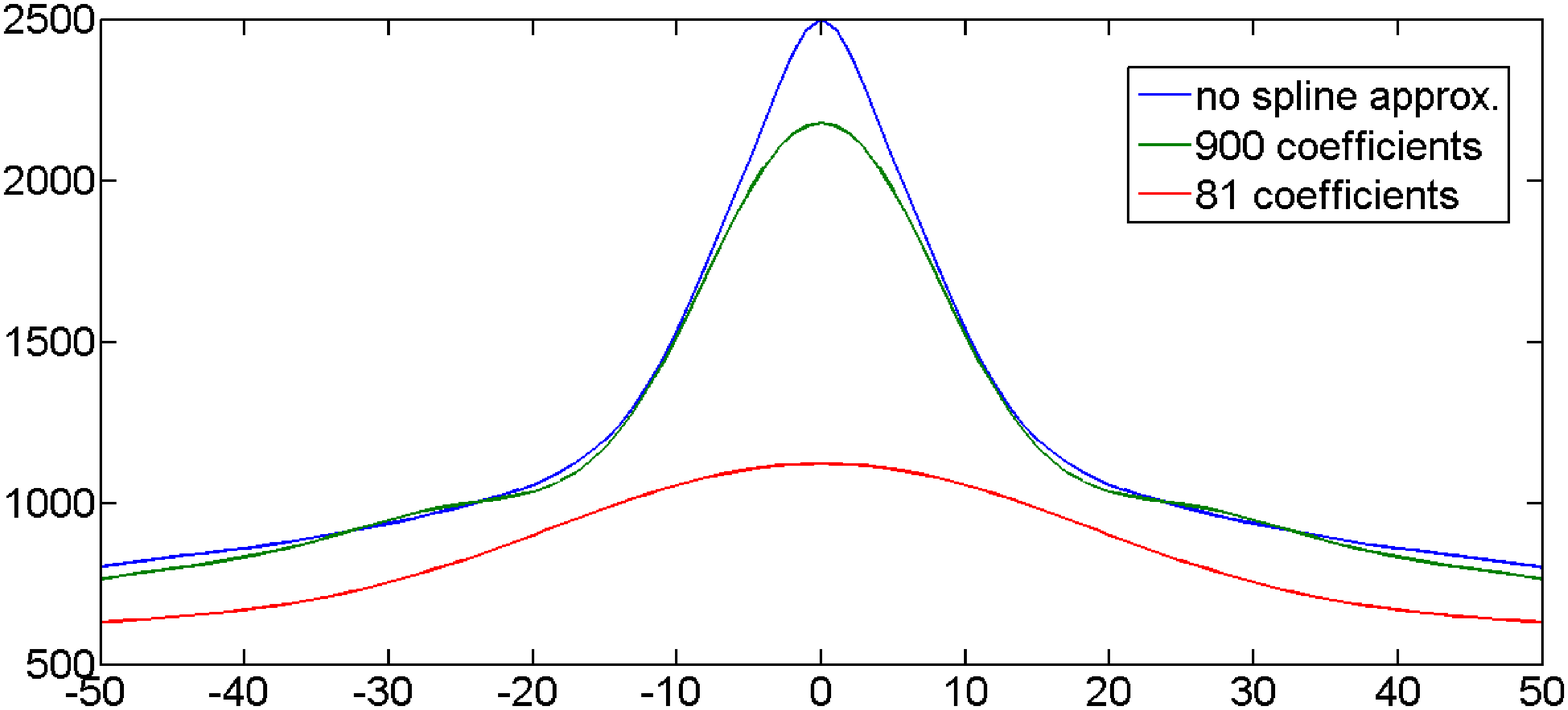}
\end{minipage}
\caption{\footnotesize The first image shows a 2D-slide of a CT data-set of the head with $350\times 350$ pixel. The second image is a spline-approximation with 900 basis-functions. The appearing artefacts may be caused by the Gibbs phenomena. The blue graph of the third image describes the cost function (\ref{Zielfunktion}) by translating the image on the left against itself. For the green and red graph we replace the image by its spline-approximation with 900 and respectively 81 basis-functions.} \label{glaetten}
\end{figure}


\section{Registration on Spline Function Spaces}
If we consider an image as a function on the space of pixels, smoothing can be regarded as a projection to a suitable space of smooth functions, such as e.g. spaces of splines. We will show that this interpretation, compared to standard smoothing methods like Gaussian filters, has the advantage that we can exploit the reduction of information in the previous discussed Newton method while preserving a high degree of accuracy. A difficulty with this approach though is that the space of splines is usually not invariant under rotations, as happens for tensor product splines. In this section we work out the details in this setting and indicate a way how to avoid such a difficulty.


\subsection{Spline-Approximation of the Images}
In this subsection we will describe the projection of a row image data to a spline function space. A medical image $f$ can simply be seen as a mapping of vertices of a grid to gray values. 
\begin{eqnarray}
 f:\Z^n\rightarrow\R.\nonumber
\end{eqnarray}
Of course, in all applications, $f$ will only be defined on a finite set, but without any restriction, we can continue it by setting all other values equal zero. Furthermore, the Nyquist Sampling Theorem, (see for example \cite{Digital},) allows us to extend $f$ to a unique band-limited function on $\R^n$
\begin{eqnarray}
 f:\R^n\rightarrow\R.\nonumber
\end{eqnarray}
Of course, using this function in image processing exercises leeds to enormous numerical costs. Therefore we use a projection of $f$ into a finite dimensional function space~$\mathcal{S}$. Defining this space, we employ the so called B-splines
\begin{eqnarray}
 B^0(x):=\left\lbrace 
\begin{array}{cc}
 1 & \mbox{if}~-\frac{1}{2}\leqslant x\leqslant \frac{1}{2}\\
0 & \mbox{else}
\end{array},~~~~B^k(x):=\int\limits_{\R}B^{k-1}(s)B^0(s-x)ds,\nonumber
\right. \nonumber
\end{eqnarray}
extended in $n$-dimension by their tensor-products
\begin{eqnarray}
 B^k(x_1,\dots,x_n):=B^k(x_1)\cdot \ldots \cdot B^k(x_n).\label{spline}
\end{eqnarray}
Fig. \ref{Splines} shows the graphs of the splines in first and second order in one dimension. The most important fact of a B-splines for us is that it becomes smoother when the order is growing. The corresponding spline function space can now be defined as the set generated by all integer-translations of (\ref{spline}) (cf.\cite{splines})
\begin{eqnarray}
 \mathcal{S}_{\lambda}^k:=\left\lbrace \sum\limits_{\lambda\cdot r \in\Z^n}c_rB^k\left(\frac{x_1}{\lambda}-r_1,\dots,\frac{x_n}{\lambda}-r_n \right)~\Big|~c\in l_2  \right\rbrace,~~~~~~\lambda\in\N.\nonumber 
\end{eqnarray}
The condition $c\in l_2$ ensures that the $L_2$-Norm of all functions in $\mathcal{S}_{\lambda}^k$ exist. Note that we will only work with images that have a bounded support, which is the reason why we only consider finitely many linear combination of tensor product functions $B^k$. Therefore, the condition is always fullfilled. The additional parameter $\lambda\in\N$ is used in image processing to neglect high frequency informations of the images (cf.~\cite{640-4}). That is, if $\lambda$ is increasing, the projection $f_{R_{\lambda}}\in\mathcal{S}_{\lambda}^k$  of an image $R$ becomes smoother (cf. Fig. \ref{glaetten}).


\subsection{Registration of Spline Coefficients}
We begin with a reformulation of the registration problem. For two given images $f\in \mathcal{S}_{\lambda}^2$ and $g\in \mathcal{S}_{\lambda}^1$ with coefficients $c^f,c^g\in l_2$ the objective function (\ref{Zielfunktion}) becomes
\begin{align}
 \Phi(A,t)=\sum\limits_{r,s\in \frac{1}{\lambda}\Z^n}c_s^fc_r^g \int\limits_{\R^n}B_{s,\lambda}^2(Ax+t)B_{r,\lambda}^1(x)dx,\nonumber
\end{align}
with $B_{r,\lambda}^k(x):=B^k\left(\frac{x}{\lambda}-\lambda r \right)$. Since $B_{r,\lambda}^k(x)$ is a translation of $B_{0,\lambda}^k$, we obtain
\begin{align}
 \Phi(M)&=\sum\limits_{s,r\in \frac{1}{\lambda}\Z^n}c_s^fc_r^g\int\limits_{\R^n}B_{0,\lambda}^1(x)B_{0,\lambda}^2(A(x+s)+t-r)dx\nonumber\\
&\approx \sum\limits_{s,r\in \frac{1}{\lambda}\Z^n}c_s^fc_r^g\int\limits_{\R^n}B_{0,\lambda}^1(x)B_{0,\lambda}^2(x-A^{-1}(r-t)+s)dx.\label{approx}
\end{align}

The approximation in the last line needs some explanations: Of course, the tensor products (\ref{spline}) are not invariant under rotation. Assuming they are, we obtain approximations, by rotating the argument of the second spline around the barycentre $t-r$ in such a way that the argument becomes a simple translation in the direction $A^{-1}(r-t)-s$. In the cases where $A$ is close to the identity (which is true for most of the medical image problems) the approximation error tends to zero. Our examples show that this simplification does hardly influence the solution of the  optimization problem.

Therefore, the convolution of two splines 
\begin{align}\begin{split}
F(s)&:=\int\limits_{\R^n}  B_{0,\lambda}^1(x)B_{0,\lambda}^2(x-s)dx, \\
&= B_{0,\lambda}^4(s_1)\cdot\ldots\cdot B^4_{0,\lambda}(s_n)\end{split}\label{delta}
\end{align}
is related to the optimization problem. Actually, $F(s)$ is, by definition,  the tensor product of B-splines of order four that is well studied in literature~\cite{640-4}. For us, the most important fact of such B-splines is that they are three times continuous differentiable, which makes (\ref{approx}) adaptive to the Newton-algorithm (\ref{newton-G}). Furthermore, $F(s)$ is piecewise polynomial with degree four, so the values and the derivatives of the function can be calculated very quickly and without additional numerical approximations. To sum up, the modified image registration problem becomes
\begin{eqnarray}
 \max\limits_{M\in G}\sum\limits_{s,r\in \frac{1}{\lambda}\Z^n}c_s^fc_r^g F(P M^{-1} \bar{r}-s).\nonumber
\end{eqnarray}

\begin{figure}[t]
\begin{minipage}[r]{0.5\textwidth}
\centering
\includegraphics[width=6cm]{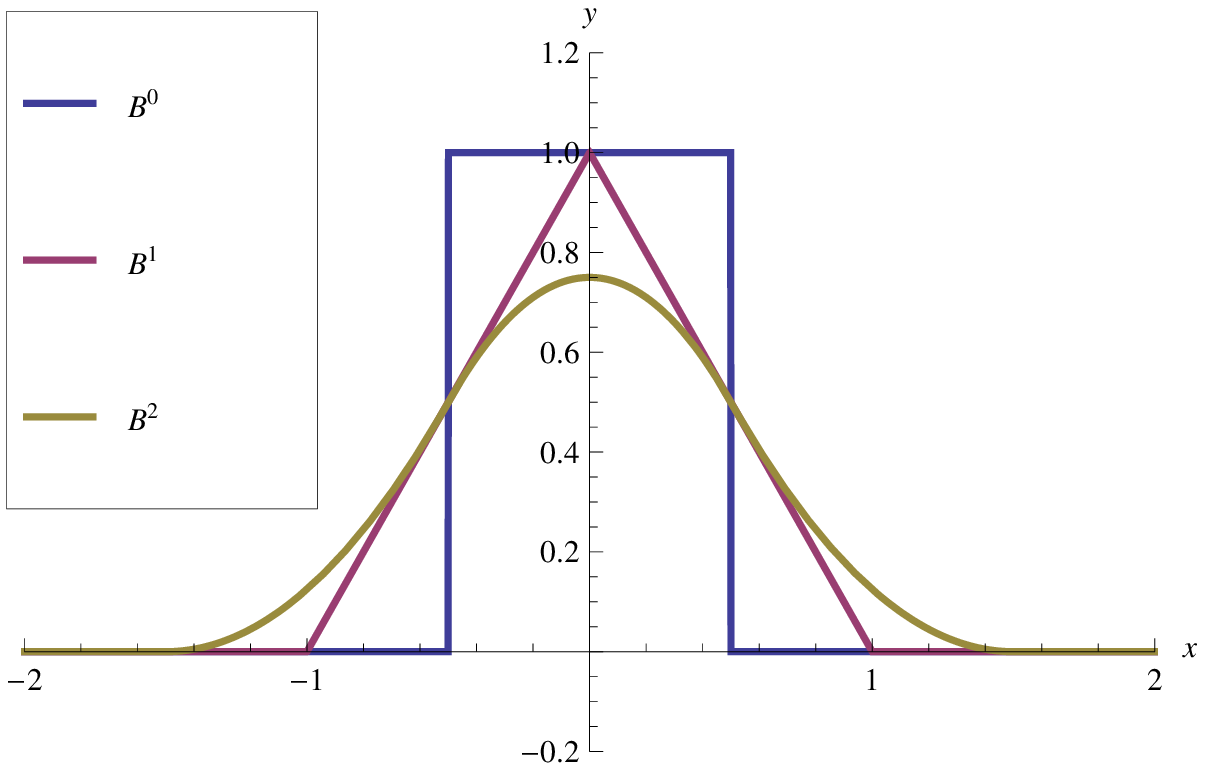}
\end{minipage}%
\begin{minipage}[l]{0.5\textwidth}
\centering
\includegraphics[width=6cm]{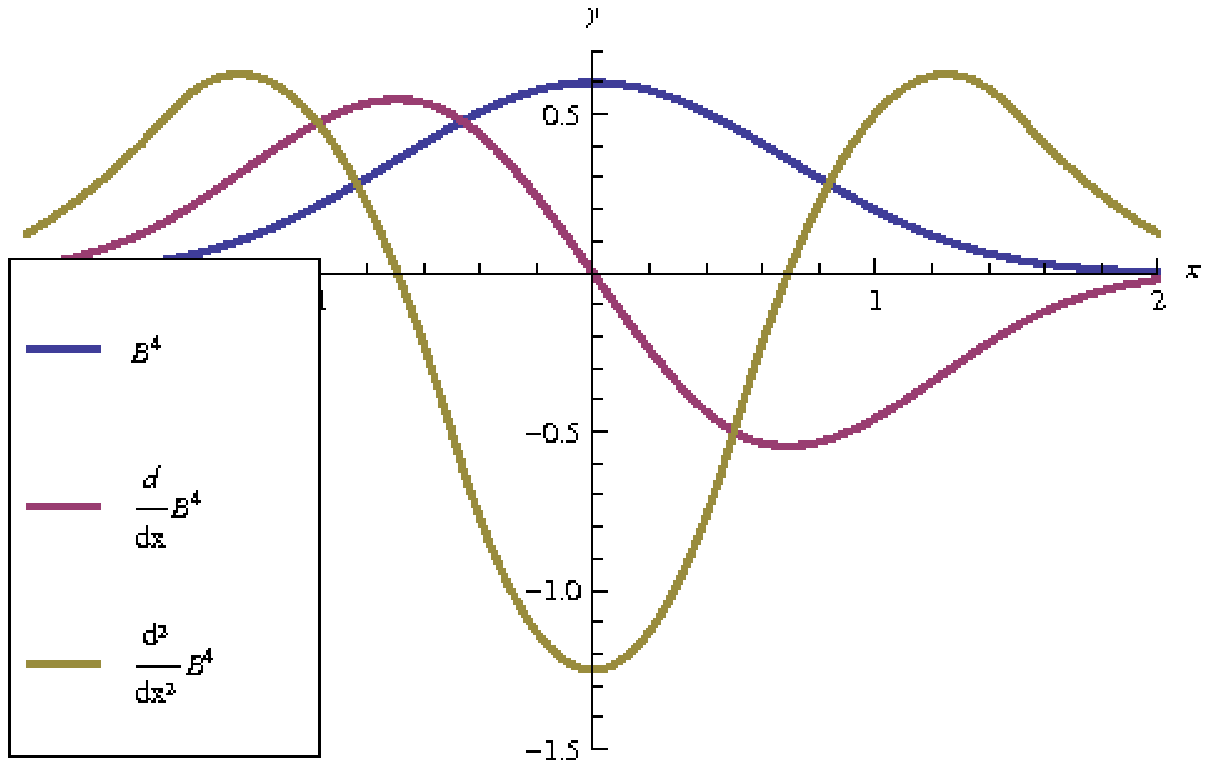}
\end{minipage}

\begin{eqnarray}
B^1(x)&=&\left\lbrace \begin{array}{lcc}
       1+x &\mbox{for}& -1 < x \leqslant 0\\
1-x &\mbox{for}& 0 < x \leqslant 1\\
0 &\mbox{else}&
                        \end{array}
\right. \nonumber\\
B^2(x)&=&\left\lbrace \begin{array}{lcc}
      \frac{1}{8} (9 + 12 x + 4 x^2) &\mbox{for}& -\frac{3}{2} < x \leqslant -\frac{1}{2}\\
\frac{1}{4}(3 - 4 x^2) &\mbox{for}& -\frac{1}{2} < x \leqslant +\frac{1}{2}\\
\frac{1}{8}(9 - 12 x + 4 x^2) &\mbox{for}& \frac{1}{2} < x \leqslant \frac{3}{2}\\
0 &\mbox{else}&
                        \end{array}
\right. \nonumber\\
B^3(x)&=&\left\lbrace \begin{array}{lcc}
      \frac{1}{6} (8 + 12 x + 6 x^2 + x^3) &\mbox{for}& -2 < x \leqslant -1\\
\frac{1}{6}(4 - 6 x^2 - 3 x^3) &\mbox{for}& -1 < x \leqslant 0\\
\frac{1}{6}(4 - 6 x^2 + 3 x^3) &\mbox{for}& 0 < x \leqslant 1\\
\frac{1}{6}(8 - 12 x + 6 x^2 - x^3) &\mbox{for}& 1 < x \leqslant 2\\
0 &\mbox{else}&
                        \end{array}
\right. \nonumber\\
 B^4(x)&=&\left\lbrace \begin{array}{lcc}
      \frac{1}{384} (5 + 2 x)^4 &\mbox{for}& -\frac{5}{2} < x \leqslant -\frac{3}{2}\\
\frac{1}{96}(55 - 20 x - 120x^2 - 80x^3 - 16x^4) &\mbox{for}& -\frac{3}{2} < x \leqslant -\frac{1}{2}\\
\frac{1}{192}(115 -120 x^2 +48x^4) &\mbox{for}& -\frac{1}{2} < x \leqslant \frac{1}{2}\\
\frac{1}{96}(55 + 20 x - 120x^2 + 80x^3 - 16x^4) &\mbox{for}& \frac{1}{2} < x \leqslant \frac{3}{2}\\
\frac{1}{384} (-5 + 2 x)^4 &\mbox{for}& \frac{3}{2} < x \leqslant \frac{5}{2}\\
0 &\mbox{else}&
                        \end{array}
\right. \nonumber
\end{eqnarray}

\caption{\footnotesize The first image shows the B-splines of order zero, one and two. The second image shows the B-spline of order four, with its first and second derivative.} \label{Splines}
\end{figure}

To calculate the Newton-step for this optimization problem, we apply the setting in section 3.2 for the objective function
\begin{eqnarray}
\Phi(M)=\sum\limits_{s,r\in \frac{1}{\lambda}\Z^n}c_s^fc_r^g F(P M \bar{r}-s)\label{Fkt-splin}
\end{eqnarray}
and obtain an analogous formula of (\ref{Gradient}) for the gradient $\nabla(\Phi\circ\mu_{M})(0,0):=(\tilde{\Omega},\tilde{v})$ with
\begin{align}
\begin{split}
\tilde{\Omega}&=\sum\limits_{s,r\in \frac{1}{\lambda}\Z^n}c_s^fc_r^g \pi_{\frak{k}} \left( \nabla F(Ar+t-s)(Ar+t)^{\top}\right) \\
\tilde{v}&=\sum\limits_{s,r\in \frac{1}{\lambda}\Z^n}c_s^fc_r^g\nabla F(Ar+t-s).\end{split}\label{Fkt-splin-gradient}
\end{align}
Similarly, formula (\ref{SimpleHessian}) for the Hessian operator  $Hess_{\Phi\circ\mu_M}(0)(\Omega,v)=(\hat{\Omega},\hat{v})$ is replaced by  
\begin{align}
 \hat{\Omega}
&=\pi_{\frak{k}}\left[ \frac{1}{2}\sum\limits_{s,r\in \frac{1}{\lambda}\Z^n}c_s^fc_r^g\Big(   \Omega^{\top}\nabla F(Ar+t-s)(Ar+t)^{\top}  \right.
\nonumber\\
&+\nabla F(Ar+t-s)(Ar+t)^{\top}\Omega^{\top}+2v(Ar+t)^{\top}\mbox{H}_{F}(Ar+t-s) \nonumber\\
&+ 2\mbox{H}_{F}(Ar+t-s)\Omega(Ar+t)(Ar+t)^{\top}\Big)  \bigg]\label{Fkt-splin-hessian}\\
\hat{v}&= \sum\limits_{s,r\in \frac{1}{\lambda}\Z^n}c_s^fc_r^g\mbox{H}_{F}(Ar+t-s)(\Omega (Ar+t)+v) \nonumber
\end{align}

Using the same calculations as in section 3 we end up with two  new \textbf{Spline-based Newton Registration-Algorithm} (SB), one is acting on $SE(n)$  and one on $SA(n)$. Both algorithms have almost the same steps as their continuous counterparts, the QMC-Newton on $SE(n)$ and on $SA(n)$ respectively, which is the reason why we don't present them here once again. The only difference in the algorithms mentioned before is the calculation of the coefficients $\alpha,\ldots,\epsilon$ which is done in step 2 of each algorithm. (See Table 3.1 or Table 3.2)

In the case of the spline-based algorithms, these coefficients have the following form:
\begin{align}
\alpha_i&=\sum\limits_{s,r\in \frac{1}{\lambda}\Z^n}c_s^fc_r^g\frac{\partial F}{\partial x_i}(Ar+t-s)\nonumber\\  
\beta_{i,j}&=\sum\limits_{s,r\in \frac{1}{\lambda}\Z^n}c_s^fc_r^g \frac{\partial F}{\partial x_i}(Ar+t-s)(Ar+t)_j \nonumber\\
\gamma_{i,j,k}&=\sum\limits_{s,r\in \frac{1}{\lambda}\Z^n}c_s^fc_r^g(Ar+t)_i\frac{\partial^2 F}{\partial x_j\partial x_k}(Ar+t-s) \nonumber\\
\delta_{i,j,k,l}&= \sum\limits_{s,r\in \frac{1}{\lambda}\Z^n}c_s^fc_r^g(Ar+t)_i(Ar+t)_j\frac{\partial^2 F}{\partial x_k\partial x_l}(Ar+t-s)\nonumber\\
\epsilon_{i,j}& =\sum\limits_{r,s\in \frac{1}{\lambda}\Z^n}c_s^fc_r^g\frac{\partial^2 F}{\partial x_i\partial x_j}(Ar+t-s). \nonumber
\end{align}

One may argue that the above presented SB-Newton algorithm requires a high degree of differentiability of the images, as it is assumed in the QMC-Newton algorithm. Exactly as in Theorem 3.4 the only condition on the images $f$ and $g$ to show the local quadratic convergence of the SB-Newton algorithms (Table 31 or Table 3.2) is that the cost function $\Phi$ in (\ref{Fkt-splin}) is in $C^3(G,\R)$. However, here, the construction of $\Phi$ via splines reduces the requirements of differentiability on $f$ and $g$. Explicitly, for $f\in \mathcal{S}_{\lambda}^p$ and $g\in \mathcal{S}_{\lambda}^q$ this condition is fullfilled if and only if $p+q\geqslant 3$. Thus, for $p=2$ and $q=1$ we have $f\in C^1(\R^n,\R)$, $g\in C(\R^n,\R)$ and get the local quadratic convergence of the SB-Newton algorithm in the same way as it was shown in the QMC-Newton algorithms (where $f\in C^3(\R^n,\R)$ is needed). Moreover, the SB-Newton algorithms need no evaluations of the derivatives of $f$ or $g$. They work directly on the given coefficients, given by the spline representation of the images.


\subsection{Extension to Mutual Information}
Following the approach of Viola~\cite{Viola}, the calculation of the mutual information of two images $f$ and $g$ is done in two steps. Firstly, one has to calculate $f(x_l)-g(x_l)$ for a couple of supporting points $\left\lbrace x_l\right\rbrace _{l\in I}\subset\R^n$ to give an estimation of the joint density $\rho_{f,g}$. Afterwards, the integral $\int \rho_{f,g}\log\rho_{f,g}$ is approximated numerically - usually by a Monte Carlo method.

Let two images $f\in\mathcal{S}_{\lambda}^2$ and $g\in\mathcal{S}_{\lambda}^1$ be given. We estimate the joint density by using the coefficients $c_u^f$ and $c_u^g$ of $f$ and $g$:
\begin{eqnarray} \rho_{f,g}(x)\approx\sum\limits_{u\in\frac{1}{{\lambda}}\Z^n}\sigma B^3\left( \frac{x-(c_u^f-c_u^g)}{\sigma}\right) .\label{rho}
\end{eqnarray}
 Here, $\sigma\in\R$ controls the approximation error of $\rho_{f,g}$ (, cf. \cite{Duda} p. 88-95), and $B^3$ denotes the cubic B-Spline in one dimension (cf. Fig. 4.1). In image registration we are especially interested in the case in which $g$ is deformed by an element $M\in G$. Therefore, we have to replace $c_u^g$ in (\ref{rho}) by the coefficients $\hat{c}_u^g$ of $g(PM\bar{x})$. However, in general, $g(PM\bar{x})$ is not in $\mathcal{S}_{\lambda}^2$ for an arbitrary $M\in G$ and a additional projection step is needed to get $\hat{c}_u^g$. We make the same simplification as in the previous section and get
\begin{eqnarray}
\hat{c}_u^g\left( 
\begin{array}{cc}
 A & t\\
0 & 1
\end{array}
\right) 
\approx\sum\limits_{s,r\in \frac{1}{{\lambda}}\Z^n}c_s^g\gamma_{u,r} F(A^{-1}(r-t)+s).\label{bdach}
\end{eqnarray}
with suitable $\gamma_{u,r}\in\R$. The weights $\gamma_{u,r}$ are necessary, since the translations of the splines do not form an orthonormal system. They can be calculated explicitly and independently of $f$ and $g$. Substituting $c_u^g$ in (\ref{rho}) by (\ref{bdach}) provides an explicit formula for the objective function $\Phi:G\rightarrow \R$: 
\begin{eqnarray} 
\Phi(M):=\int\limits_{\R}\varrho\left(  \sum\limits_{u\in\frac{1}{{\lambda}}\Z^n}\sigma B^3\left( \frac{x-(c_u^f-\hat{c}_u^g(M))}{\sigma}\right)\right) dx,~~~\varrho(x):=-x\log x \label{entropy_approx}
\end{eqnarray}

In the sequel, we use the Simpson-rule to approximate the entropy $\int\! \rho_{f,g}\log\rho_{f,g}$. Following~\cite{Viola}, we have to search the maximum of $\Phi$. Since (\ref{entropy_approx}) is an three times differentiable function, we can use the same techniques used in section 3.2. We get:
\begin{align}
\Phi(M)=\frac{1}{p}&\sum\limits_{k\in\Z}\varrho\left(\sigma \sum\limits_{u\in\frac{1}{{\lambda}}\Z^n} B^3\left(\frac{k}{\sigma p}-\frac{1}{\sigma}(c_u^f-\hat{c}_u^g(M))\right)\right) \label{Zielfunktion_MI}
\intertext{with}
\nabla(\Phi\circ\mu_M)(0)=\frac{1}{p}&\sum\limits_{k\in\Z}\varrho'\left(\sigma \sum\limits_{u\in\frac{1}{{\lambda}}\Z^n} B^3\left(\frac{k}{\sigma p}-\frac{1}{\sigma}(c_u^f-\hat{c}_u^g(M))\right)\right) \nonumber\\
\cdot & \sum\limits_{u\in\frac{1}{{\lambda}}\Z^n}
{B^3}'\left(\frac{k}{\sigma p}-\frac{1}{\sigma}(c_u^f-\hat{c}_u^g(M))\right)\nabla(\hat{c}_u^g\circ\mu_M)(0)
 \nonumber
\end{align}
and
\begin{align}
&\mbox{Hess}_{\Phi\circ\mu_M}(0)(\Omega,v)\nonumber\\
&=\frac{1}{p}\sum\limits_{k\in\Z}\varrho'\left(\sigma \sum\limits_{u\in\frac{1}{{\lambda}}\Z^n} B^3\left(\frac{k}{\sigma p}-\frac{1}{\sigma}(c_u^f-\hat{c}_u^g(M))\right)\right) \nonumber\\
&\cdot\sum\limits_{u\in\frac{1}{{\lambda}}\Z^n}
{B^3}'\left(\frac{k}{\sigma p}-\frac{1}{\sigma}(c_u^f-\hat{c}_u^g(M))\right)\mbox{Hess }_     {\hat{c}_u^g\circ\mu_M}(0)(\Omega,v)
\nonumber\\
&+\frac{1}{p}\sum\limits_{k\in\Z}\varrho'\left(\sigma \sum\limits_{u\in\frac{1}{{\lambda}}\Z^n} B^3\left(\frac{k}{\sigma p}-\frac{1}{\sigma}(c_u^f-\hat{c}_u^g(M))\right)\right) \nonumber\\
&\cdot\sum\limits_{u\in\frac{1}{{\lambda}}\Z^n}
{B^3}''\left(\frac{k}{\sigma p}-\frac{1}{\sigma}(c_u^f-\hat{c}_u^g(M))\right)
\left\langle \nabla(\hat{c}_u^g\circ\mu_M)(0), (\Omega,v)\right\rangle 
\nabla(\hat{c}_u^g\circ\mu_M)(0) 
\nonumber\\
&+\frac{1}{p}\sum\limits_{k\in\Z}\varrho''\left(\sigma \sum\limits_{u\in\frac{1}{{\lambda}}\Z^n} B^3\left(\frac{k}{\sigma p}-\frac{1}{\sigma}(c_u^f-\hat{c}_u^g(M))\right)\right) \nonumber\\
&\cdot\sum\limits_{u\in\frac{1}{{\lambda}}\Z^n}
\left( 
{B^3}'\left(\frac{k}{\sigma p}-\frac{1}{\sigma}(c_u^f-\hat{c}_u^g(M))\right)
\right)^2 
\left\langle \nabla(\hat{c}_u^g\circ\mu_M)(0), (\Omega,v)\right\rangle 
\nabla(\hat{c}_u^g\circ\mu_M)(0).
\nonumber
\end{align}
Where ${B^3}'$ and ${B^3}''$ denote the first and second derivative of $B^3$ (and the same holds for $\varrho'$ and $\varrho''$).

Before writing the corresponding Newton step in components, we need some substitutions:
\begin{align}
 b_{u,k}=&B^3\left(\frac{k}{\sigma p}-\frac{1}{\sigma}(c_u^f-\hat{c}_u^g(M))\right)&\nonumber\\
{b_{u,k}}' =&{B^3}' \left(\frac{k}{\sigma p}-\frac{1}{\sigma}(c_u^f-\hat{c}_u^g(M))\right)&\nonumber\\
{b_{u,k}}''=&{B^3}''\left(\frac{k}{\sigma p}-\frac{1}{\sigma}(c_u^f-\hat{c}_u^g(M))\right)&\nonumber\\
\varrho_k'=& \varrho'\left(\sigma \sum\limits_{u\in\frac{1}{{\lambda}}\Z^n} B^3\left(\frac{k}{\sigma p}-\frac{1}{\sigma}(c_u^f-\hat{c}_u^g(M))\right)\right)\nonumber\\
\varrho_k''=& \varrho''\left(\sigma \sum\limits_{u\in\frac{1}{{\lambda}}\Z^n} B^3\left(\frac{k}{\sigma p}-\frac{1}{\sigma}(c_u^f-\hat{c}_u^g(M))\right)\right)\label{coefs_mi_1}\\
\intertext{and}
\zeta_{r,s,i}=&\sum\limits_{k,u}\left( \varrho_k'{b_{u,k}}''+\varrho_k''{b_{u,k}}'^2\right) (\tilde{v}_u)_i(\tilde{\Omega}_u)_{r,s}\nonumber\\
\eta_{r,i}=&\sum\limits_{k,u}\left( \varrho_k'{b_{u,k}}''+\varrho_k''{b_{u,k}}'^2\right) (\tilde{v}_u)_i(\tilde{v}_u)_{r}\nonumber\\
\vartheta_{r,s,i,j}=&\sum\limits_{k,u}\left( \varrho_k'{b_{u,k}}''+\varrho_k''{b_{u,k}}'^2\right) (\tilde{\Omega}_u)_{i,j}(\tilde{\Omega}_u)_{r,s}\nonumber\\
\theta_{k,u,i}=& \varrho_k'{b_{u,k}}' (\tilde{v}_u)_i\label{coefs_mi_1_star}\\
\iota_{k,u,i,j}=& \varrho_k'{b_{u,k}}' (\tilde{\Omega}_u)_{i,j}.\nonumber
\end{align}

Here, $(\tilde{v}_u,\tilde{\Omega}_u)$ denotes the gradient of the function $\hat{c}_u^g\circ\mu_M$ in zero. Note that $\hat{c}_u^g\circ\mu_M$ is a function of the form (\ref{Fkt-splin}). Hence, the gradient and the Hessian in zero are already calculated in (\ref{Fkt-splin-gradient}) and (\ref{Fkt-splin-hessian}). Therefore, the $i$th component of the vector part of the Newton equation becomes
\begin{align}
 \sum\limits_{k,u}\varrho_{k}'{b_{u,k}}'\left(\mbox{Hess}_{\hat{c}_u^g\circ\mu_M}(0)(\Omega,v)\right)_{vector,i} 
+\sum\limits_{r,s}\zeta_{r,s,i}\Omega_{r,s}+\sum\limits_{r}\eta_{r,i}v_r
=-\sum\limits_{k,u}\theta_{k,u,i}\nonumber
\end{align}
and the $(i,j)$ component of the corresponding matrix-part is 
\begin{align}
 \sum\limits_{k,u}\varrho_{k}'{b_{u,k}}'\left(\mbox{Hess}_{\hat{c}_u^g\circ\mu_M}(0)(\Omega,v)\right)_{matrix,i,j} 
+\sum\limits_{r,s}\vartheta_{r,s,i,j}\Omega_{r,s}+\sum\limits_{r}\zeta_{i,j,r}v_r
=-\sum\limits_{k,u}\iota_{k,u,i,j}.\nonumber
\end{align}

To expand these two equations in full detail, we make the same calculations as in the algorithms before and set
\begin{align}
\hat{\alpha}_i&=\sum\limits_{u,v}\varrho_v'{b_{u,v}}'\sum\limits_{s,r\in \frac{1}{\lambda}\Z^n}c_s^g\gamma_{u,r}\frac{\partial F}{\partial x_i}(Ar+t-s),\nonumber\\ 
\hat{\beta}_{i,j}&=\sum\limits_{u,v}\varrho_v'{b_{u,v}}'\sum\limits_{s,r\in \frac{1}{\lambda}\Z^n}c_s^g\gamma_{u,r} \frac{\partial F}{\partial x_i}(Ar+t-s)(Ar+t)_j, \nonumber\\
\hat{\gamma}_{i,j,k}&=\sum\limits_{u,v}\varrho_v'{b_{u,v}}'\sum\limits_{s,r\in \frac{1}{\lambda}\Z^n}c_s^g\gamma_{u,r}(Ar+t)_i\frac{\partial^2 F}{\partial x_j \partial x_k}(Ar+t-s), \label{coefs_mi_2}\\
\hat{\delta}_{i,j,k,l}&=\sum\limits_{u,v}\varrho_v'{b_{u,v}}' \sum\limits_{s,r\in \frac{1}{\lambda}\Z^n}c_s^g\gamma_{u,r}(Ar+t)_i(Ar+t)_j\frac{\partial^2 F}{\partial x_l \partial x_k}(Ar+t-s),\nonumber\\
\hat{\epsilon}_{i,j}& =\sum\limits_{u,v}\varrho_v'{b_{u,v}}'\sum\limits_{r,s\in \frac{1}{\lambda}\Z^n}c_s^g\gamma_{u,r}\frac{\partial^2 F}{\partial x_i \partial x_j}(Ar+t-s). \nonumber
\end{align}

Now we have all necessary instruments to present the Newton equation. In the case of the rigid registration we end up with the following analogue to the linear system (\ref{begin_newton_SE(n)})-(\ref{end_newton_SE(n)}):

\begin{lemma}
Let $(\Omega,v)\in so(n)\times \R^n$ be the Newton-direction for the objective function (\ref{Zielfunktion_MI}) in a certain point $M\in SE(n)$. Then the components $\Omega_{k,l}$ $, 1\leqslant k,l\leqslant n,$ of $\Omega$ and $v_,k$ $1\leqslant k\leqslant n,$ of $v$ satisfy
\begin{align}
 \sum\limits_{k>l}(\hat{\gamma}_{l,k,i}-\hat{\gamma}_{k,l,i}+\zeta_{k,l,i}-\zeta_{l,k,i})\Omega_{k,l}+\sum\limits_k(\hat{\epsilon}_{i,k}+\eta_{k,i})v_k=-\sum\limits_{k,u}\theta_{k,u,i}\label{n_start}\end{align}
for all $1\leqslant i\leqslant n$ and
\begin{align}
 &\frac{1}{2}\sum\limits_{k>j}(\hat{\beta}_{i,k}+\hat{\beta}_{k,i})\Omega_{k,j}-\frac{1}{2}\sum\limits_{k<j}(\hat{\beta}_{i,k}+\hat{\beta}_{k,i})\Omega_{j,k}
-\frac{1}{2}\sum\limits_{k>i}(\hat{\beta}_{j,k}+\hat{\beta}_{k,j})\Omega_{k,i}\nonumber\\
&+\frac{1}{2}\sum\limits_{k<i}(\hat{\beta}_{j,k}+\hat{\beta}_{k,j})\Omega_{i,k}
-\sum\limits_{k>l}(\hat{\delta}_{i,k,l,j}-\hat{\delta}_{j,l,k,i}+\hat{\delta}_{i,l,k,j}-\hat{\delta}_{i,k,l,j}-\vartheta_{k,l,i,j}+\vartheta_{l,k,i,j})\Omega_{k,l}\nonumber\\
&~~~~~~~~~~~~~~~~~~~~~~~~~~~~~~~~~~~~~~~~~~~~~~~~~~-\sum\limits_{k}(\hat{\gamma}_{j,k,i}-\hat{\gamma}_{i,k,j}-\zeta_{i,j,k})v_k=-\sum\limits_{k,u}\iota_{k,u,i,j}.\label{n_end}
\end{align}
for all $1\leqslant i<j\leqslant m$.
\end{lemma}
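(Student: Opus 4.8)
The plan is to start from the two component identities displayed just before (\ref{coefs_mi_2}), namely the $i$-th vector equation and the $(i,j)$-th matrix equation of the Newton system $\mbox{Hess}_{\Phi\circ\mu_M}(0)(\Omega,v)=-\nabla(\Phi\circ\mu_M)(0)$. These already encode the reduction of the three-term Hessian formula for the mutual-information cost (\ref{Zielfunktion_MI}) into three pieces: the operator term carrying $\mbox{Hess}_{\hat{c}_u^g\circ\mu_M}(0)(\Omega,v)$ weighted by $\varrho_k'{b_{u,k}}'$, the two rank-one contributions collected into $\zeta,\eta,\vartheta$ through the combined weight $\varrho_k'{b_{u,k}}''+\varrho_k''{b_{u,k}}'^2$ of (\ref{coefs_mi_1})--(\ref{coefs_mi_1_star}), and the gradient on the right-hand side collected into $\theta$ and $\iota$. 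Thus the only work left is to expand the remaining operator term $\sum_{k,u}\varrho_k'{b_{u,k}}'\,\mbox{Hess}_{\hat{c}_u^g\circ\mu_M}(0)(\Omega,v)$ in components and then to pass to the free skew-symmetric coordinates.

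For that expansion I would use the fact, already noted in the text, that each inner map $\hat{c}_u^g\circ\mu_M$ is of the form (\ref{Fkt-splin}); hence its gradient $(\tilde{\Omega}_u,\tilde{v}_u)$ and its Hessian are precisely (\ref{Fkt-splin-gradient}) and (\ref{Fkt-splin-hessian}). Writing the action of $\mbox{Hess}_{\hat{c}_u^g\circ\mu_M}(0)$ in components therefore reproduces verbatim the $SE(n)$ spline analogue of the linear system (\ref{begin_newton_SE(n)})--(\ref{end_newton_SE(n)}), with per-$u$ coefficients of the shape (\ref{coefs_mi_2}) but \emph{without} the outer $\varrho_v'{b_{u,v}}'$-factor. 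Multiplying by $\varrho_k'{b_{u,k}}'$ and summing over $k$ and $u$ then collapses these per-$u$ coefficients into $\hat{\gamma},\hat{\epsilon},\hat{\beta},\hat{\delta}$ exactly as defined in (\ref{coefs_mi_2}) (the sampling index $k$ here playing the role of $v$ there); this is precisely where the hatted coefficients are engineered so that the weighted sum telescopes into a single symbol.

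Finally I would reduce to the independent unknowns $\Omega_{k,l}$, $k>l$, using $\Omega_{l,k}=-\Omega_{k,l}$ and $\Omega_{k,k}=0$. Antisymmetrising each full sum $\sum_{r,s}(\cdot)_{r,s}\Omega_{r,s}$ into $\sum_{k>l}\big((\cdot)_{k,l}-(\cdot)_{l,k}\big)\Omega_{k,l}$ turns the $\hat{\gamma}$- and $\zeta$-contributions into the combination $\hat{\gamma}_{l,k,i}-\hat{\gamma}_{k,l,i}+\zeta_{k,l,i}-\zeta_{l,k,i}$ of (\ref{n_start}), and likewise produces the $\hat{\beta}$-grouping and the joint $\hat{\delta}$/$\vartheta$-grouping of (\ref{n_end}); the $v$-couplings give $\hat{\epsilon}_{i,k}+\eta_{k,i}$ in the vector equation and $\hat{\gamma}_{j,k,i}-\hat{\gamma}_{i,k,j}-\zeta_{i,j,k}$ in the matrix equation, while the gradient side becomes $-\sum_{k,u}\theta_{k,u,i}$ and $-\sum_{k,u}\iota_{k,u,i,j}$. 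I expect the genuine difficulty to be purely clerical: keeping the index order and signs consistent through the antisymmetrisation, in particular matching the sign with which $\vartheta$ enters the $\hat{\delta}$-block, recognising that the matrix-by-vector and vector-by-matrix couplings both produce the \emph{same} symbol $\zeta$ with permuted indices, and checking that the common prefactor $1/p$ cancels between the two sides of the Newton equation so that it is absent from the final system.
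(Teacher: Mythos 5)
Your proposal is correct and follows essentially the same route as the paper, which in fact gives no separate proof of this lemma but simply remarks that one ``makes the same calculations as in the algorithms before'': substitute the spline gradient/Hessian (\ref{Fkt-splin-gradient})--(\ref{Fkt-splin-hessian}) for the inner maps $\hat{c}_u^g\circ\mu_M$ into the component equations preceding (\ref{coefs_mi_2}), collect the weighted sums into the hatted coefficients, and antisymmetrise down to the free unknowns $\Omega_{k,l}$, $k>l$. Your identification of the only real work as clerical bookkeeping (index order, the sign of the $\vartheta$-block, and the cancellation of the $1/p$ prefactor) matches what the paper leaves implicit.
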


\begin{figure}[t]
\begin{minipage}[m]{0.3\textwidth}
\fbox{
\parbox{12.5cm}{
\textbf{Table 4.1: Mutual Information-based Registration-Algorithm on SE(n)}\\[0.5cm]
Step 1.\\
Pick an initial guess $M_0\in SE(n)$ and set $m=0$.\\[0.5cm]
Step 2.\\
Calculate all coefficients in (\ref{coefs_mi_1}), (\ref{coefs_mi_1_star}) 
and (\ref{coefs_mi_2}). \\[0.5cm]
Step 3.\\
Solve the linear system described in (\ref{n_start}) and (\ref{n_end}) with the unknowns $v_i$, $1\leqslant i\leqslant n$ and $\Omega_{i,j}$, $1\leqslant i<j\leqslant n$.\\[0.5cm]
Step 4.\\
Construct the $n\times n$ matrix $\Omega$ with entries $\Omega_{i,j}$. If $j>i$ use the solution of step 3 or else set
 \begin{eqnarray}
      \Omega_{i,j}=\left\lbrace 
\begin{array}{cc}
 -\Omega_{j,i} & \mbox{for~} j<i\\
 0 & \mbox{for~} j=i.
\end{array}
\right. \nonumber
    \end{eqnarray}
Compute 
\begin{eqnarray}
 M_{m+1}:=\nu_{M_m}^{QR}(\Omega,v),\nonumber
\end{eqnarray}
where $\nu^{QR}$ is defined in (\ref{Karte-SE(n)}).\\[0.5cm]
Step 5.\\
Set $m=m+1$ and goto Step 2.
}}
\end{minipage}
\end{figure}

\begin{figure}[t]
\begin{minipage}[m]{0.3\textwidth}
\fbox{
\parbox{12.5cm}{
\textbf{Table 4.2: Mutual Information-based Registration-Algorithm on SA(n)}\\[0.5cm]
Step 1.\\
Pick an initial guess $M_0\in SA(n)$ and set $m=0$.\\[0.5cm]
Step 2.\\
Calculate all coefficients in (\ref{coefs_mi_1}), (\ref{coefs_mi_1_star}) and (\ref{coefs_mi_2}). \\[0.5cm]
Step 3.\\
Solve the linear system described in (\ref{f0_mi}) - (\ref{f4_mi}) with the unknowns $v_i$, $1\leqslant i\leqslant n$ and $\Omega_{i,j}$, $1\leqslant i,j\leqslant n$ and $(i,j)\neq (n,n)$.\\[0.5cm]
Step 4.\\
Construct the $n\times n$ matrix $\Omega$ with entries $\Omega_{i,j}$. If $(j,i)\neq (n,n)$ use the solution of step 3 or else set
\begin{eqnarray}
\Omega_{n,n}=-\sum\limits_{k \neq n} \Omega_{k,k}\nonumber
    \end{eqnarray}
and compute 
\begin{eqnarray}
 M_{m+1}:=\nu_{M_m}^{QR}(\Omega,v),\nonumber
\end{eqnarray}
where $\nu^{QR}$ is defined in (\ref{Karte-SA(n)}).\\[0.5cm]
Step 5.\\
Set $m=m+1$ and goto Step 2.
}}
\end{minipage}
\end{figure}

We finish this section with the corresponding lemma in the case of volume-preserving transformations.

\begin{lemma}
Let $(\Omega,v)\in sl(n)\times \R^n$ be the Newton-direction for the objective function (\ref{Zielfunktion_MI}) in a certain point $M\in SA(n)$. Then the components $\Omega_{k,l},$ $ 1\leqslant k,l\leqslant n,~(k,l)\neq (n,n)$ of $\Omega$ and $v_k$ $1\leqslant k\leqslant n$ of $v$ satisfy for each $1\leqslant i\leqslant n$ 
\begin{align}
 \sum\limits_{k\neq l}(\hat{\gamma}_{l,k,i}+\zeta_{k,l,i})\Omega_{k,l}+\sum\limits_{k\neq n}(\hat{\gamma}_{k,k,i}-\hat{\gamma}_{n,n,i}+\zeta_{k,k,i}-\zeta_{n,n,i})\Omega_{k,k}\nonumber\\
+\sum\limits_{k}(\hat{\epsilon}_{i,k}-\eta_{k,i})v_k=-\sum\limits_{k,l}\theta_{k,l,i}.\label{f0_mi}
\end{align}
 and for all $1\leqslant i,j\leqslant n$, $(i,j)\neq(n,n)$ the following equations: 
\begin{align}\begin{split}
 \sum\limits_k\hat{\beta}_{i,k}\Omega_{j,k}+\sum\limits_{k}\hat{\beta}_{k,j}\Omega_{k,i}
+\sum\limits_{(k,l)\neq (n,n)}(\hat{\delta}_{j,l,k,i}+\vartheta_{l,k,i,j})\Omega_{l,k}\\
-\sum\limits_{k\neq n}(\hat{\delta}_{j,k,k,i}-\vartheta_{k,k,i,j})\Omega_{k,k} 
+\sum\limits_k(\hat{\gamma}_{j,k,i}-\zeta_{i,j,k})v_k
=-\sum\limits_{k,l}\iota_{k,l,i,j}\end{split}\label{f1_mi}
\end{align}
for $i\neq n,~j\neq n,~i\neq j$,\\
\begin{align}\begin{split}
 \sum\limits_k\hat{\beta}_{n,k}\Omega_{j,k}+\sum\limits_{k\neq n}\hat{\beta}_{k,j}\Omega_{k,n}
+\sum\limits_{(k,l)\neq (n,n)}(\hat{\delta}_{j,l,k,n}+\vartheta_{l,k,n,j})\Omega_{l,k}\\
-\sum\limits_{k\neq n}(\hat{\delta}_{j,k,k,i}-\vartheta_{k,k,n,j}+\frac{1}{2}\hat{\beta}_{n,j})\Omega_{k,k} 
+\sum\limits_k(\hat{\gamma}_{j,k,n}-\zeta_{n,j,k})v_k
=-\sum\limits_{k,l}\iota_{k,l,n,j}\end{split}\label{f2_mi}
\end{align}
for $i=n,~j\neq n$,\\
\begin{align}\begin{split}
 \sum\limits_{k\neq n}\hat{\beta}_{i,k}\Omega_{n,k}+\sum\limits_{k}\hat{\beta}_{k,n}\Omega_{k,i}
+\sum\limits_{(k,l)\neq (n,n)}(\hat{\delta}_{n,l,k,i}+\vartheta_{l,k,i,n})\Omega_{l,k}\\
-\sum\limits_{k\neq n}(\hat{\delta}_{n,k,k,i}-\vartheta_{k,k,i,n}+\frac{1}{2}\hat{\beta}_{i,n})\Omega_{k,k} 
+\sum\limits_k(\hat{\gamma}_{n,k,i}-\zeta_{i,n,k})v_k
=-\sum\limits_{k,l}\iota_{k,l,i,n}\end{split}\label{f3_mi}
\end{align}
for $j=n,~i\neq n$ and\\
\begin{align}\begin{split}
\sum\limits_k\hat{\beta}_{i,k}\Omega_{j,k}+\sum\limits_{k}\hat{\beta}_{k,j}\Omega_{k,i}
+\sum\limits_k\left(\hat{\gamma}_{j,k,i}-\zeta_{i,j,k}-\frac{1}{n}\sum\limits_{l}\hat{\gamma}_{l,l,k}\right)v_k\\
+\sum\limits_{(k,l)\neq (n,n)}\left(\hat{\delta}_{j,l,k,i}+\vartheta_{l,k,i,j}-\frac{1}{2}\left( \hat{\beta}_{l,k}+\sum\limits_m\hat{\delta}_{k,m,l,m}\right)\right)\Omega_{l,k}\\
-\sum\limits_{k\neq n}\left(\hat{\delta}_{j,k,k,i}-\vartheta_{k,k,i,j}-\frac{1}{n}
\left( \hat{\beta}_{k,k}+\sum\limits_m\hat{\delta}_{k,m,k,m}\right)
\right)\Omega_{k,k} \\
=-\sum\limits_{k,l}\left(\iota_{k,l,i,j}-\frac{1}{n}\sum\limits_m\iota_{k,l,m,m}\right)\end{split}\label{f4_mi}
\end{align}
for $i=j,~i,j\neq n$.
\end{lemma}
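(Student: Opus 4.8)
The plan is to treat this statement exactly as the volume-preserving counterpart of the immediately preceding $SE(n)$ lemma (equations (\ref{n_start})--(\ref{n_end})): the derivation is identical up to replacing the skew-symmetrizing projection $\pi_{so(n)}$ by the trace-free projection $\pi_{sl(n)}(X)=X-\frac{\tr X}{n}I_n$ that defines $sl(n)$. I would start from the defining Newton equation $\mbox{Hess}_{\Phi\circ\mu_M}(0)(\Omega,v)=-\nabla(\Phi\circ\mu_M)(0)$ for the mutual-information objective (\ref{Zielfunktion_MI}), using the gradient and Hessian expressions already recorded above the lemma.

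The first substantive step is to expand the chain rule through $\varrho$ and $B^3$. Since (\ref{Zielfunktion_MI}) depends on $M$ only through the coefficients $\hat c_u^g(M)$, and each $\hat c_u^g\circ\mu_M$ is a function of the form (\ref{Fkt-splin}), its gradient $(\tilde\Omega_u,\tilde v_u)$ and Hessian are given verbatim by (\ref{Fkt-splin-gradient}) and (\ref{Fkt-splin-hessian}). Differentiating twice then produces two families of terms: terms carrying the \emph{inner} Hessian, weighted by $\varrho_k' b_{u,k}'$, and rank-one \emph{outer-product} terms weighted by $\varrho_k' b_{u,k}''+\varrho_k'' b_{u,k}'^2$. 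I would identify the former with the coefficients $\hat\alpha,\dots,\hat\epsilon$ of (\ref{coefs_mi_2}) and the latter with $\zeta,\eta,\vartheta$ of (\ref{coefs_mi_1}), while the gradient on the right-hand side collapses into $\theta,\iota$ of (\ref{coefs_mi_1_star}). This is the same transcription used to pass from (\ref{Fkt-splin-gradient})--(\ref{Fkt-splin-hessian}) to (\ref{n_start})--(\ref{n_end}) in the $SE(n)$ case.

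Next I would apply $\pi_{sl(n)}$ and write the matrix equation entrywise. Because $sl(n)$ is coordinatized by all entries except $(n,n)$, I would impose the equation only for indices $(i,j)\neq(n,n)$ and encode the trace constraint through $\Omega_{n,n}=-\sum_{k\neq n}\Omega_{k,k}$. Substituting this relation and collecting the $-\frac{1}{n}\tr(\cdot)I$ contributions splits the computation into exactly the four cases listed in (\ref{f1_mi})--(\ref{f4_mi}), with the vector component yielding (\ref{f0_mi}); each case is then a direct copy of the algebra that produced (\ref{f0})--(\ref{f6}) for pure registration on $SA(n)$, augmented by the $\zeta,\eta,\vartheta,\theta,\iota$ terms from the previous step.

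The hard part will be purely combinatorial bookkeeping around the trace constraint. The diagonal term $-\frac{1}{n}\tr(\cdot)I$ couples every diagonal equation, and eliminating $\Omega_{n,n}$ redistributes the $\hat\delta_{\cdot,n,n,\cdot}$, $\hat\beta_{n,n}$ and $\vartheta_{n,n,\cdot,\cdot}$ contributions across the remaining diagonal unknowns; keeping the signs and the $\frac{1}{n}$ normalizations consistent in all four cases --- in particular checking that the rank-one $\vartheta$-terms project in the same way as the inner-Hessian $\hat\delta$-terms --- is where errors are most likely. Everything outside of this is a mechanical transcription of already-established formulas.
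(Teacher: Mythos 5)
Your proposal matches the paper's (implicit) derivation: the paper states this lemma without a separate proof, obtaining it exactly as you describe --- by chaining the derivatives of $\varrho$ and $B^3$ through the coefficient functions $\hat{c}_u^g\circ\mu_M$ (whose gradient and Hessian are (\ref{Fkt-splin-gradient})--(\ref{Fkt-splin-hessian})), splitting the result into inner-Hessian terms collected in $\hat\alpha,\dots,\hat\epsilon$ and rank-one terms collected in $\zeta,\eta,\vartheta$, and then repeating verbatim the componentwise $\pi_{sl(n)}$-projection and trace-constraint bookkeeping that produced (\ref{f0})--(\ref{f6}). This is the same approach, and your identification of the four-case split and the elimination of $\Omega_{n,n}$ as the only delicate point is consistent with how the paper treats it.
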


With this lemmas at hand, one can easily check as bevor that the algorithms presented in Table 4.1 and Table 4.2 are locally quadraticaly convergent.


\section{Experimental Results}
\begin{figure}[t]
 \centering
\includegraphics[height=6cm,width=12.5cm]{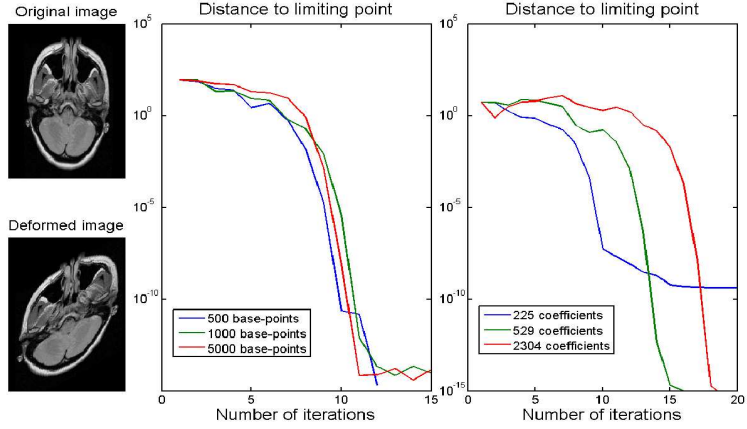}
\caption{\footnotesize Consideration of convergence speed for a $SA(2)$ problem. Middle: QMC-Newton algorithm for several number of base-points. Right: SB-Newton algorithm for several spline-approximations of the images.}\label{C_SA(2)}
\end{figure}

All computations in this section are performed using MATLAB R2008a on a 1.9 GHz laptop with 2 GB RAM. In our first example (Fig. \ref{C_SA(2)}), we demonstrate the local quadratic convergence rate of the described algorithms. We took a 2D cross-section ($250\times 250$ pixel) of a CT data set. The reference image of our artifical problem is the corresponding spline-approximation, with 289 coefficients. The template is identical to the reference. In this example we always start with the initial transformation 

\begin{eqnarray}
 M_0=\left( 
\begin{array}{rrr}
 1.2 & 0.5 & -87.5\\
0 & 0.8333 & 20.8\\
0 & 0 & 1
\end{array}
\right) .\label{initial_cond}
\end{eqnarray}

(Figure \ref{C_SA(2)} left buttom shows the original image transformed by this matrix.) The middle of Fig. \ref{C_SA(2)} plots the distance of the limiting point using the registration algorithm on $SA(2)$. To calculate the coefficients (\ref{coefs}) we use the Quasi Monte Carlo approximation with the 500, 1000 and 5000 first elements of the Halton sequence~\cite{Halton}. In all three cases we see a local quadratic convergence and in only 12 steps we achieve an accuracy $<10^{-12}$.
Since the objective function (\ref{appsum}) is a discretized and approximated versions of the correlation-measure (\ref{Zielfunktion}), there is a discrepancy between the exact and the detected transformation. That is, the limiting point of a particular iteration in Fig. (\ref{C_SA(2)}) is close to but not equal to identity matrix. 
For example, for 5000 base-points, the algorithm ends with 
\begin{eqnarray}
 M_{20}=\left( 
\begin{array}{rrr}
 1.009 & 0.001 & -2.3\\
0.008 & 0.991 & 0.24\\
0 & 0 & 1
\end{array}
\right) ,\nonumber
\end{eqnarray}
which is not very close to the optimal solution $M=I_3$. (However, this gap would not appear in a discretized version of the ``sum of squared difference''.) The graph on the right of Fig. \ref{C_SA(2)} shows the speed of convergence of the spline-based registration algorithm. Here, the reference image is the spline-approximation of the original image with 225, 529 and 2304 coefficients respectively. The template is again identical to the reference and the initial transformation is again (\ref{initial_cond}) for each experiment. Once more, we obtain a local quadratic convergence. In contrast to the Monte Carlo version of the registration algorithm, the limiting point of this SB-Newton registration is much closer to the identity matrix: In the case of 225 spline-coefficients the algorithm end with
\begin{eqnarray}
 M_{20}=\left( 
\begin{array}{rrr}
 1.0008 & -0.0004 & -0.043\\
-0.0002 & 0.9992 & 0.12\\
0 & 0 & 1
\end{array}
\right) .\nonumber
\end{eqnarray}
In case of 2304 coefficients we achieve a distance to the identity matrix of $3.7\cdot 10^{-4}$. 

\begin{table}[t]
 \begin{minipage}[l]{0.25\textwidth}
  \includegraphics[height=3cm]{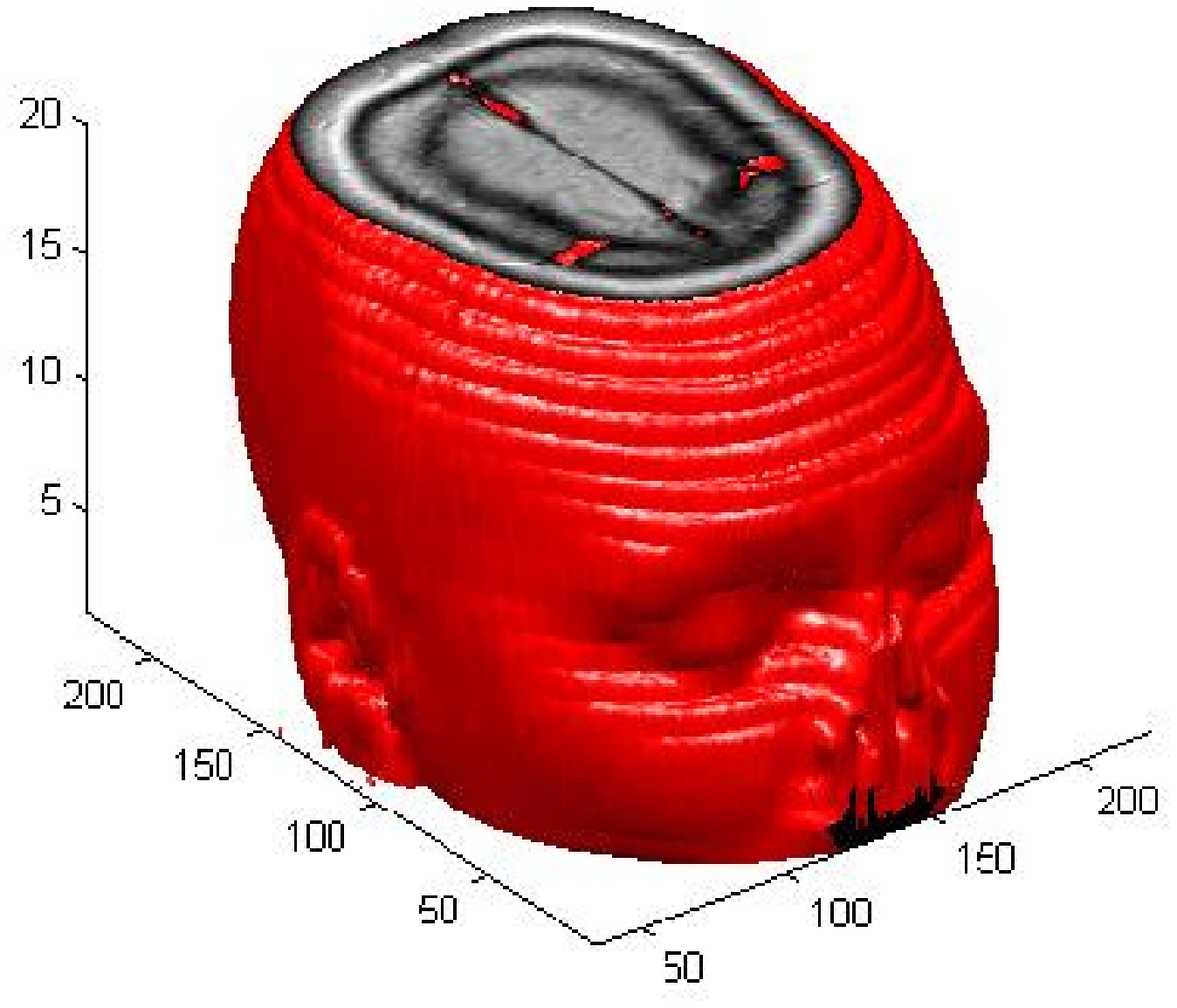}
  \includegraphics[height=3cm]{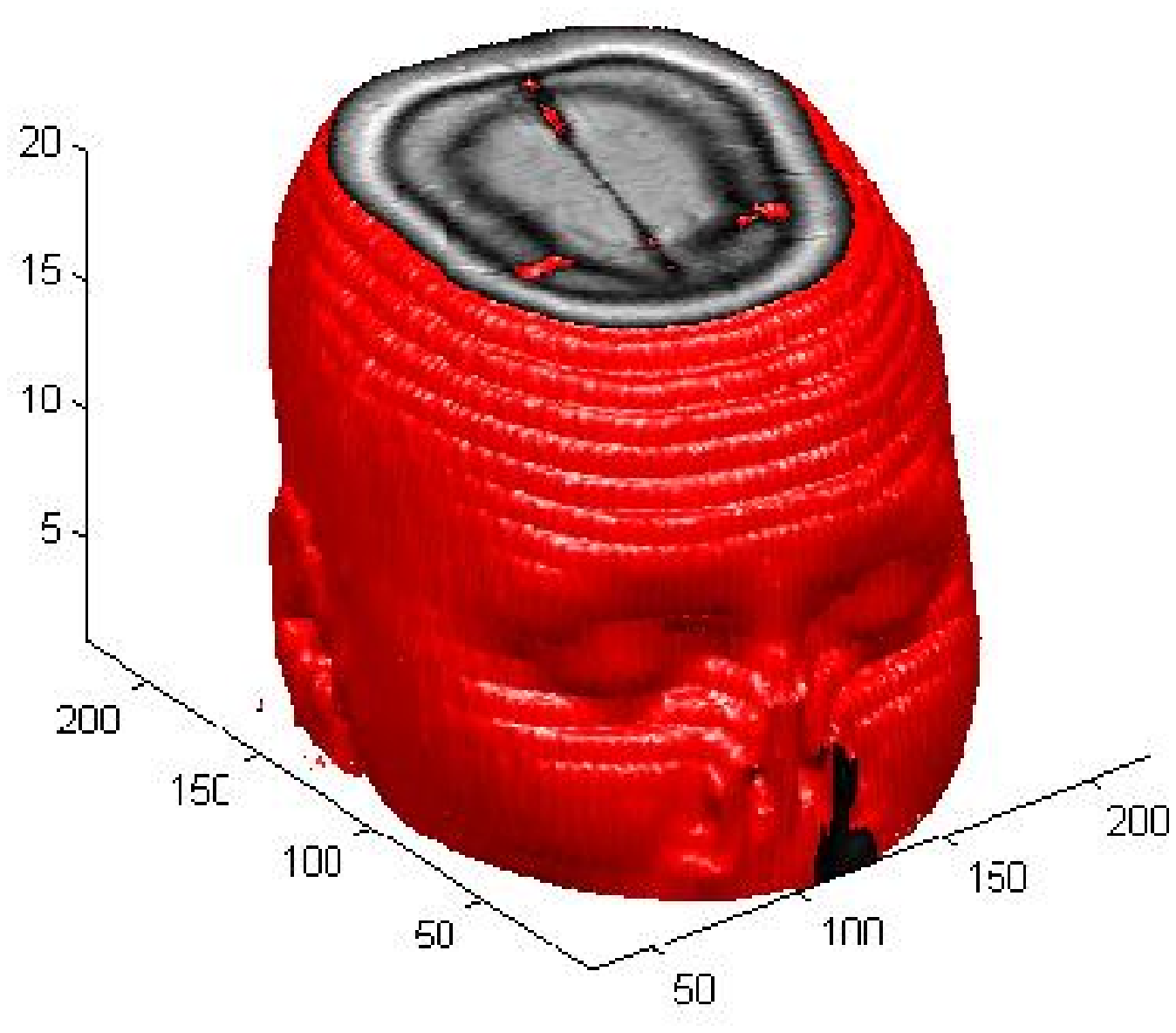}
 \end{minipage}
\begin{minipage}[mr]{0.73\textwidth}
\caption{\footnotesize Comparison of the SB-Newton with the QMC-Newton algorithm. We register the distance from the requested transformation to the result of the particular algorithm.\label{C_SE(3)}}~\\
{\footnotesize
 \begin{tabular}{|p{2cm}||p{2cm}|p{1cm}|p{1cm}|p{1cm}|}
\firsthline
Number of  & SB-Newton  & \multicolumn{3}{c|} {QMC-Newton Algorithm} \\\cline{3-5}
Coefficients & Algorithm & 2000 & 6000 & 10000\\\hline\hline
 $11\times 11\times 18$ & 0.0045 & 0.075 & 0.023 & 0.0056 \\
 & 0.45 & 9.5 & 2.3 & 1.5 \\\hline
 $16\times 16\times 18$ & 0.0016 & 0.013 & 0.017 & 0.014\\
& 0.15 & 4.5 & 1.7 & 2.1\\\hline
 $24\times 24\times 18$ & $8.8\cdot 10^{-5}$ & 0.042 & 0.0042 & 0.0036\\
& 0.47 & 5.6 & 0.49 & 0.7 \\\hline
 $50\times 50\times 18$ & $1.1\cdot 10^{-4}$ & 0.39 & 0.0026 & 0.0027\\
& 0.4 & 52 & 0.3 & 0.19\\\hline
 \end{tabular}}
\end{minipage}
\end{table}

In our next example, we examine the convergence of the $SE(3)$ algorithms. We consider a $250\times 250\times 20$ CT data set of a head (see Table 5.1). In contrast to the first example, we first transform the image and then make use of the spline-approximation to achieve the template (which is much closer to a natural registration problem). The used transformation is  
\begin{eqnarray}
 \left( 
\begin{array}{rrrr}
 0.9553 & -0.2955 & 0 &-32.1\\
-0.2955 & 0.9553 & 0 &43.5\\
 0 & 0 & 1 & 0\\
0 & 0 & 0 & 1
\end{array}
\right) \label{example2}
\end{eqnarray}
which is consistent with a rotation of 17.2 degrees around the central principal axis of inertia. In Table 5.1 we note the difference between the detected and the requested transformation: The first number in each box gives the distance of the detected to the real rotation matrix in the Frobenius norm, the second number the Euclidean distance of the detected to the real translation in pixel length. Note that a translation error smaller than 1, which is the size of one voxel, can be seen as perfect. We vary the number of coefficients used for the spline-approximation and the number of elements of the Halton sequence used for the Quasi Monte Carlo Method.

We recapitulate: For the algorithms defined in section 3.1.1 we need two approximations: A spline-approximation to smooth the image and a couple of base-point to approximate the integrals (\ref{coefs}). Also two approximations are made in the algorithms of section 4: The spline-based image smoothing and the approximation of the objective function (\ref{approx}). In case of the SB-Newton algorithm we achieve a overwhelming accuracy even for very strongly smoothed images. For a given smoothing level the operation on the spline function space seems to be superior to the Monte Carlo approximation. On the other hand, the speed of the Monte Carlo based algorithms is hardly influenced by the level of image smoothing. It depends nearly completely on the chosen number of base-points for the integral approximation. Therefore, a comparison of the algorithms focused on the rows of Table \ref{C_SE(3)} is limited. Comparing the columns of Table \ref{C_SE(3)} could lead to the impression that we achieve better results by rising the number of spline-coefficients. This is not true generally: For example, by reducing the smoothing local extrema appears, which leads to wrong results. This happens, for example, in the last row of Table \ref{C_SE(3)} for 2000 base-points. 

\begin{figure}[t]
\begin{minipage}[c]{1\textwidth}
\centering
\includegraphics[height=5.5cm, width=13cm]{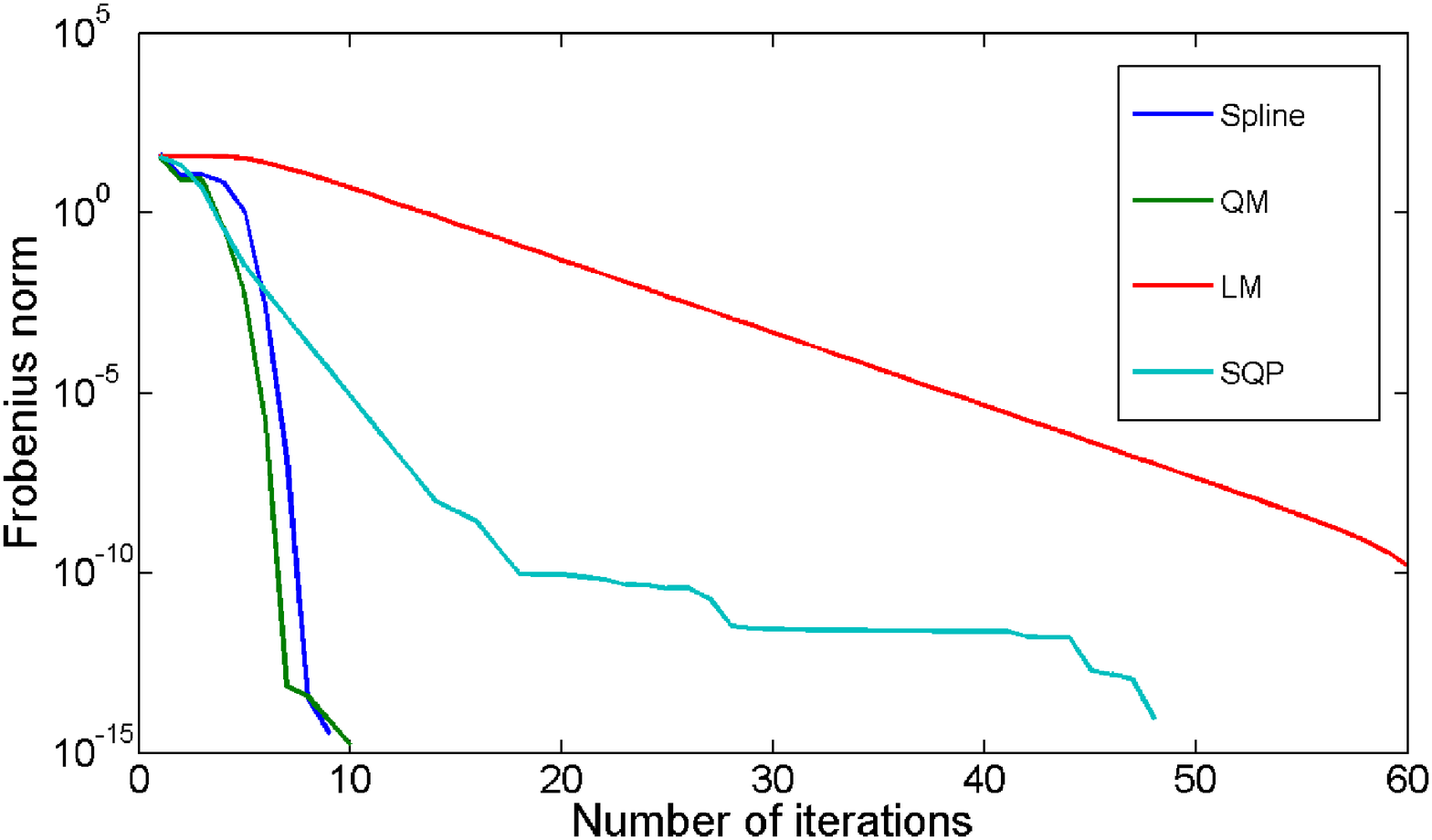}
\end{minipage}
\caption{\footnotesize Comparison of the SB-Newton, QMC-Newton, Levenberg-Marquardt (LM) and Sequential Quadratic Programming (SQP) algorithms. We choose a 2D-CT slide ($256\times 256$ pixel) as the reference and its translation along 20 pixel as the template (cf. Table 5.2). We plot the distance of the calculated transformation to the particular limiting point.\label{C_LM}}
\end{figure}

 We have already mentioned that the speed of a QMC-Newton step depends linearly on the number of base-points. In contrast the velocity of the spline-based version depends in a linear manner on the number of spline-coefficients of the reference. (To see this we make use of the fact that the function $F$ in (\ref{Fkt-splin}) has a compact support.) A spline-based Newton step on $24\times 24\times 18$ spline-coefficients take 5.3 seconds. Whereas a Quasi Monte Carlo-based Newton step for 6000 base-points takes only 1.9 seconds. However, this value depends extremely on the way of implementation, especially on the use of preimplemented MATLAB methods.

We proceed with a comparison to an established intensity-based registration algorithm. We choose the Levenberg-Marquardt-like (LM) algorithm described by  Th\'{e}venaz et al (cf. \cite{Levenberg}), since the same framework requirements are chosen to formulate the problem. Above all, they also make use of the spline-approximation of the images several times. In their approach, they use the substitution-rule in (\ref{ssd-diskret}) to get a good approximation of the derivative of the discretization (\ref{QM}) to achieve low computational costs. We also compare our algorithm with a classical, extrinsic Sequential Quadratic Programming (SQP) algorithm. (See e.g. \cite{Fletcher} chapter 12.4 for an introduction.) In this method, the objective function is approximated quadratically in every step, while the constraints are linearized. The resulting Newton-step is the performed in a vector-space of bigger dimension.

Just like in the beginning, we take the previous 2D-CT slide as reference and a translated and rotated version of of it for the template. In Fig. \ref{C_LM} we demonstrate the convergence-behavior of the SB-Newton, the QMC-Newton, the LM and the SQP-algorithm. We plot the logarithm of the distance of the actual guess $M_i$ to the particular limiting point $M_{SB}^{\star},~M_{QM}^{\star},~M_{LM}^{\star},~M_{SQP}^{\star}$. For each algorithm we use the same number of coefficients for the spline-approximation. Even though SB-Newton and QMC-Newton algorithms seem to dominate, we have to keep in mind that a difference in translation under $10^0$ pixel is still an excellent result,  which is attained by all four algorithms after 16 steps. But in contrast to LM we can choose a classical first derivative-based stopping criteria for the other algorithms: 
\begin{eqnarray}
 \|\nabla\Phi(M_i)\|<T\nonumber
\end{eqnarray}
where $T$ is an arbitrary threshhold. Because of the result in Fig. \ref{C_SA(2)} we can choose $T$ arbitrary small and, for sure, the stopping criteria will be achieved without running unnecessary many iteration steps. Therefore, no user-specification is necessary to get results of interest. 

\begin{table}[t]
\begin{minipage}[c]{1\textwidth}
{\footnotesize 
\begin{tabular}{|p{2cm}||p{2.2cm}|p{2.2cm}|p{2.2cm}|p{2.2cm}|}
\firsthline
\multicolumn{1}{|c||} {Requested}  & \multicolumn{4}{c|} {Detected Transformations} \\\cline{2-5}
  Transformation & SB-Newton & QMC-Newton & LM & SQP\\[0.1cm]\hline\hline
& & & &\\[-0.2cm]
$0^{\circ},~{0\choose 20}$    & $0^{\circ},~{0\choose 20}$ & $0.063^{\circ},~{0.09\choose 20.07}$ & $0.012^{\circ},~{0.08\choose 20.04}$ &
$0.063^{\circ},~{0.1\choose 20.07}$ 
 \\[0.2cm]\hline
& & & \\[-0.2cm]
$0^{\circ},~{40\choose 0}$  & $0.023^{\circ},~{39.44\choose 0.04}$ & $0.092^{\circ},~{38.94\choose 0.17}$ &  $0.608^{\circ},~{15.98\choose -0.84}$ &
$0.092^{\circ},~{38.94\choose 0.17}$
\\[0.2cm]\hline
& & & \\[-0.2cm]
$17.188^{\circ},~{0\choose 0}$  & $17.296^{\circ},~{1.25\choose -1.9}$ & $15.564^{\circ},~{-0.39\choose -0.02}$ &  $16.506^{\circ},~{0.17\choose -0.07}$ &
$15.564^{\circ},~{-0.39\choose -0.02}$  \\[0.2cm]\hline
 \end{tabular}}
\end{minipage}
\begin{minipage}[c]{1\textwidth}
\centering
\includegraphics[height=3cm]{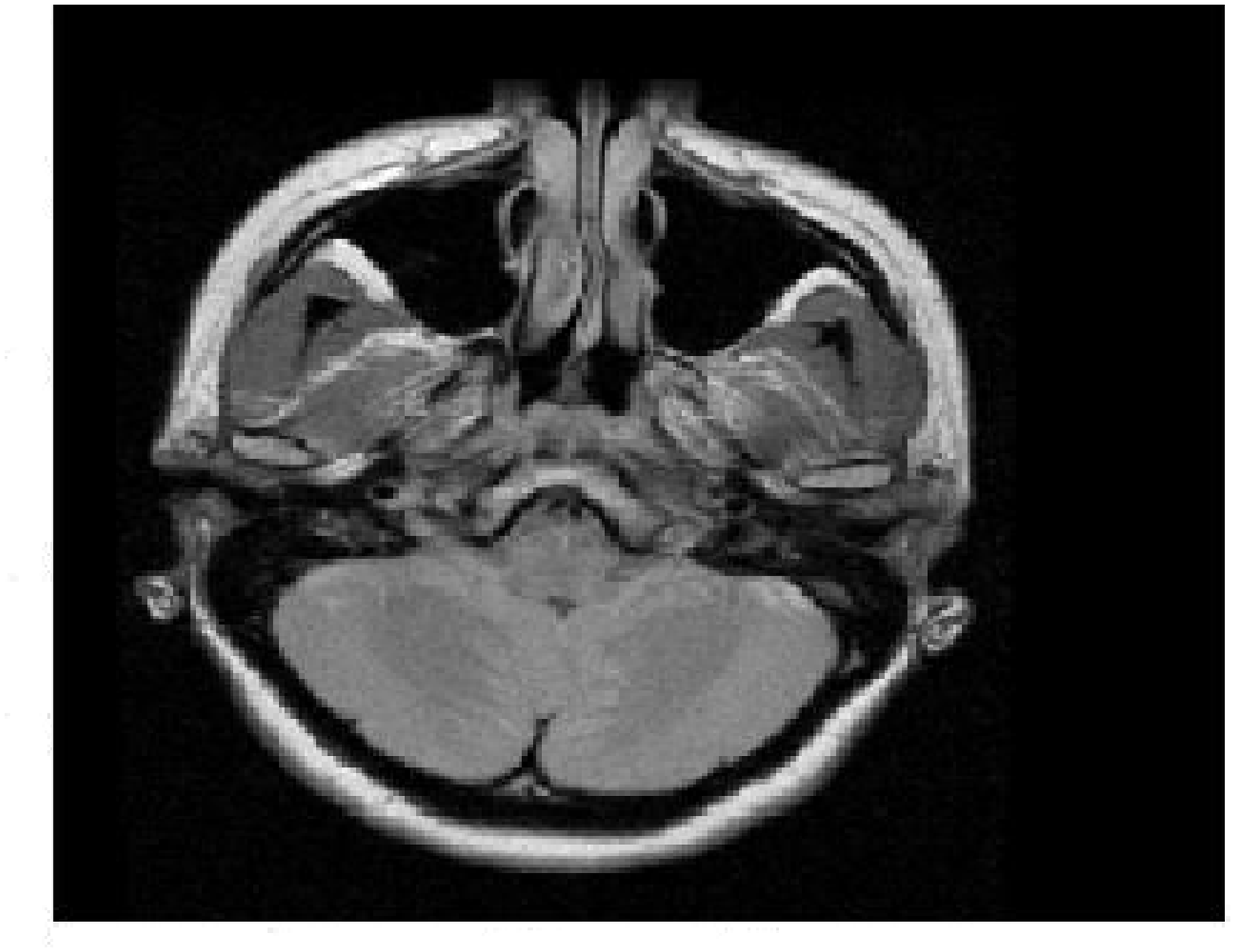}
\includegraphics[height=3cm]{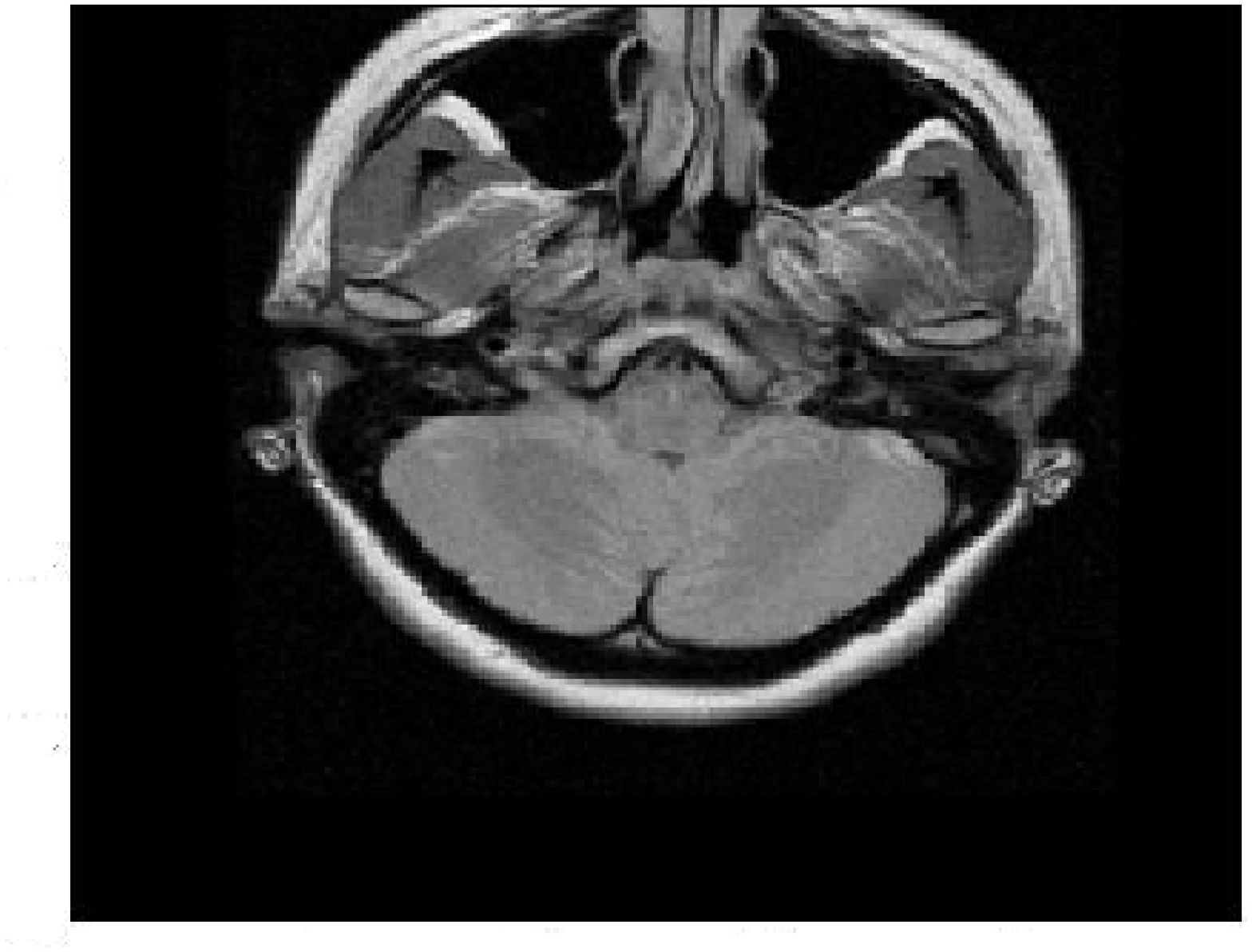}
\includegraphics[height=3cm]{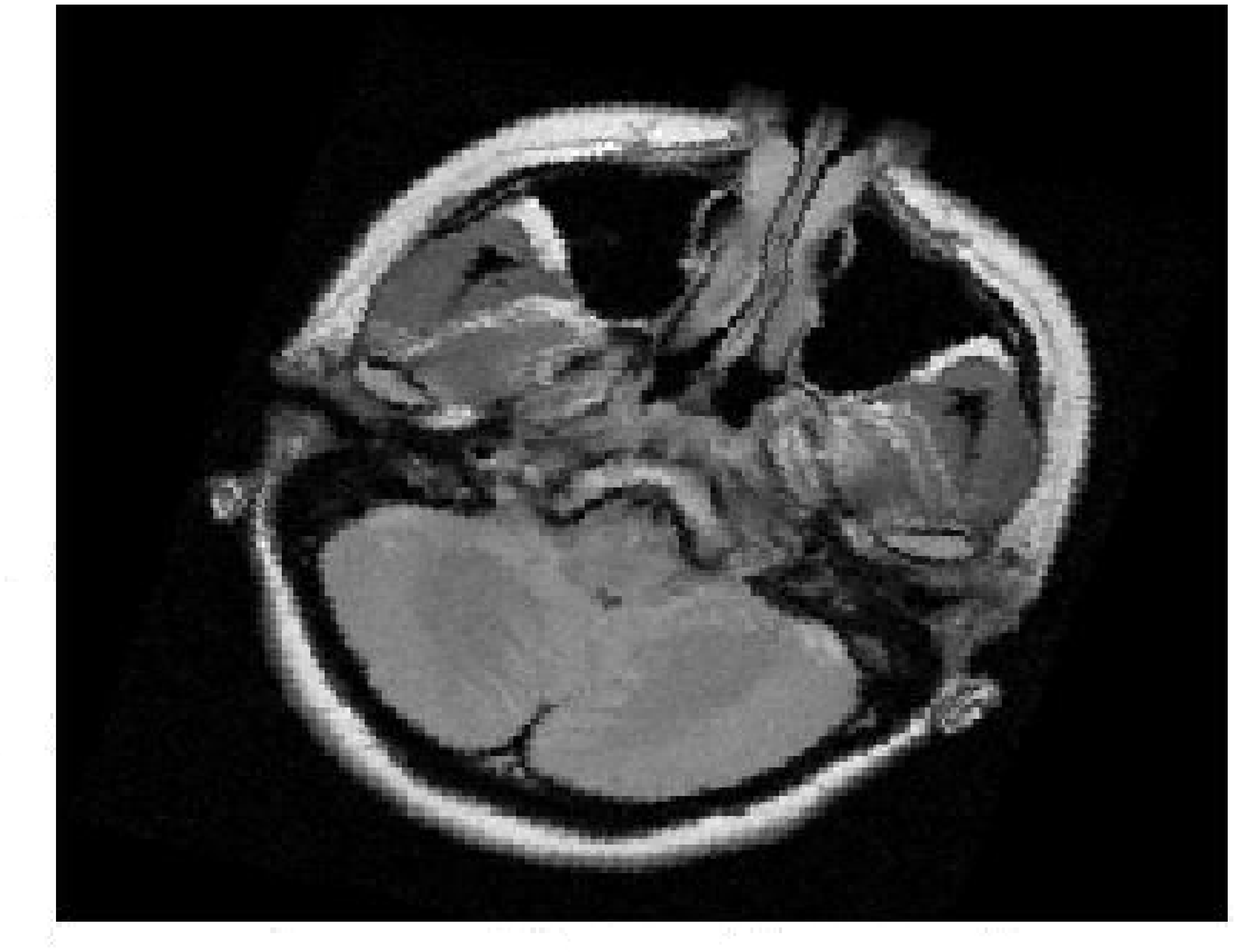}
\caption{\footnotesize Comparison of the SB-Newton, QMC-Newton, LM and SQP algorithms. The first column presents the requested transformation and the following columns the calculated transformations with the particular algorithms. The first number gives the rotation around the center of the image, the following vector gives the translation in pixel. The registration is done after a spline-approximation with $25^2$ coefficients for each algorithm. Below the table, we present the templates of the particular registration problem. The reference is equal to the one in the first experiment.\label{LMSPQ}}
\end{minipage}
\end{table}

Another result from Fig.~\ref{C_LM} is the superior convergence rate of the SB-Newton and the QMC-Newton algorithms compared to the SQP method. This stresses the point of view that intrinsic algorithms dominate extrinsic algorithms if the constraints form a differentiable manifold. By embedding the set of admissible points in a vector space, which is done in an extrinsic algorithm like the SQP, the dimension of the optimization problem may explodes. As an example, the SQP method searches for the optimum of a function $f:SE(3)\to\R$ in a space of 18 dimension (9 because of $A\in gl(3)$ instead of $A\in SO(3)$, 3 because of $t\in\R^3$ and 6 parameters are needed for the Lagrange multipliers), whereas the SB-Newton and QMC-Newton methods optimize over a space of 6 dimension (3 because of $A\in SO(3)$, 3 because of $t\in\R^3$). Hence, extrinsic algorithm may need unnecessarily many steps, a higher complexity and  finally, a projection step is needed in order to make sure that the result is an admissible point, since only the limiting point of an extrinsic algorithm is guarantee to be admissible.

Since the algorithms use different kinds of approximations, the particular limiting points diverge. We listed them in Table \ref{LMSPQ} for a few registration problems. Since we used the same cost function for the QMC-Newton and the SQP algorithm it is not surprising that the detected transformations are equal for both methods. It is notable that in the second row the detected translation in the case of the LM-algorithm is wrong. The main reason for this is that LM minimize the ``sum of squared difference''-measure while the QMC or SB-Newton algorithm maximize the correlation between two images. Of course, one can pass by such missleadings through reshaping the region of interest or  through starting at a coarser scale of spline-approximation, but this option is also applicable for the other algorithms. 

\begin{figure}[t]
 \begin{tabular}{p{4cm}p{8.5cm}}
\begin{minipage}[l]{4cm}
\centering
\includegraphics[height=4cm,width=4cm]{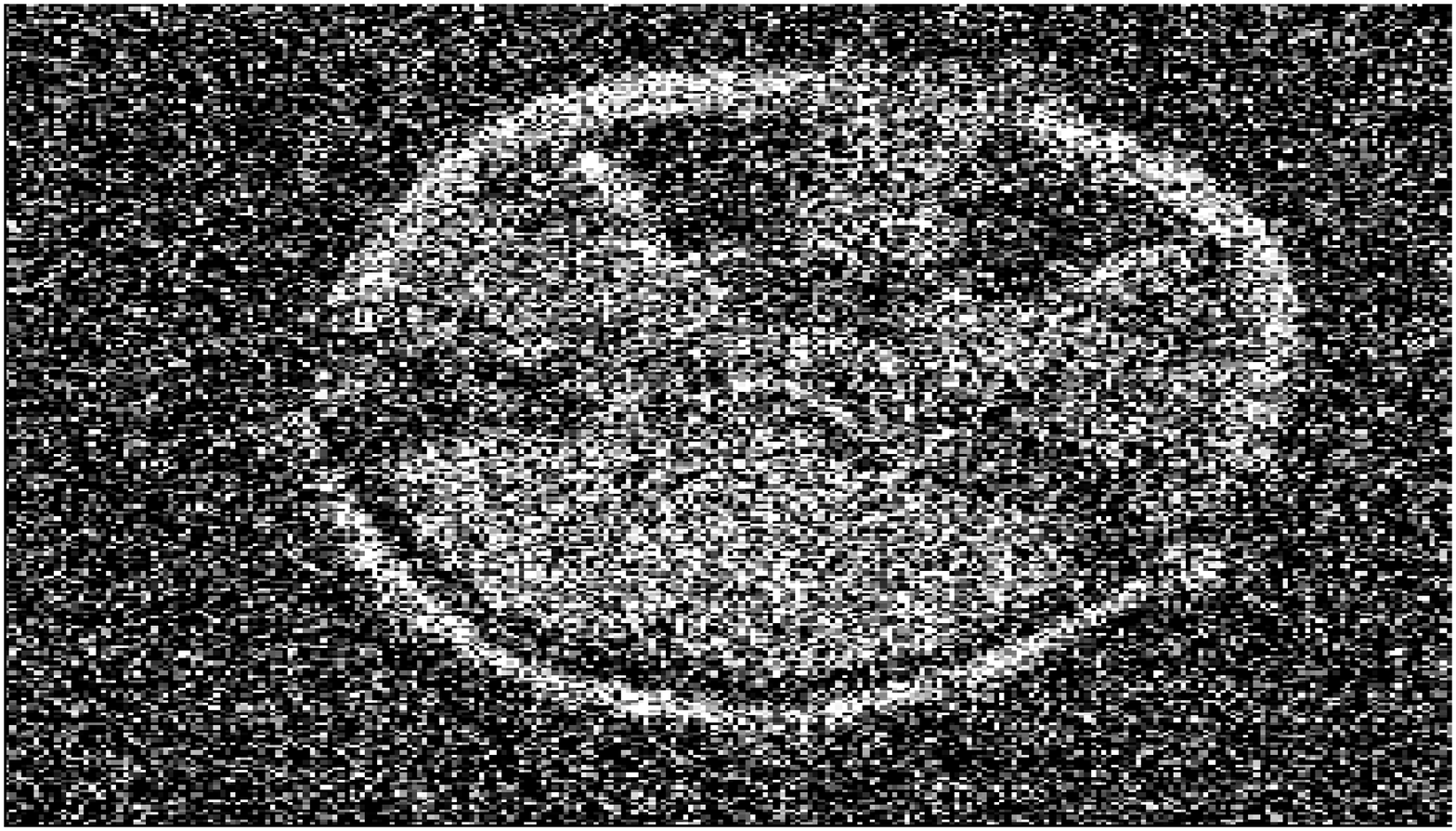}
\end{minipage}%
&
\begin{minipage}[c]{8.5cm}
\centering
\includegraphics[width=8.5cm]{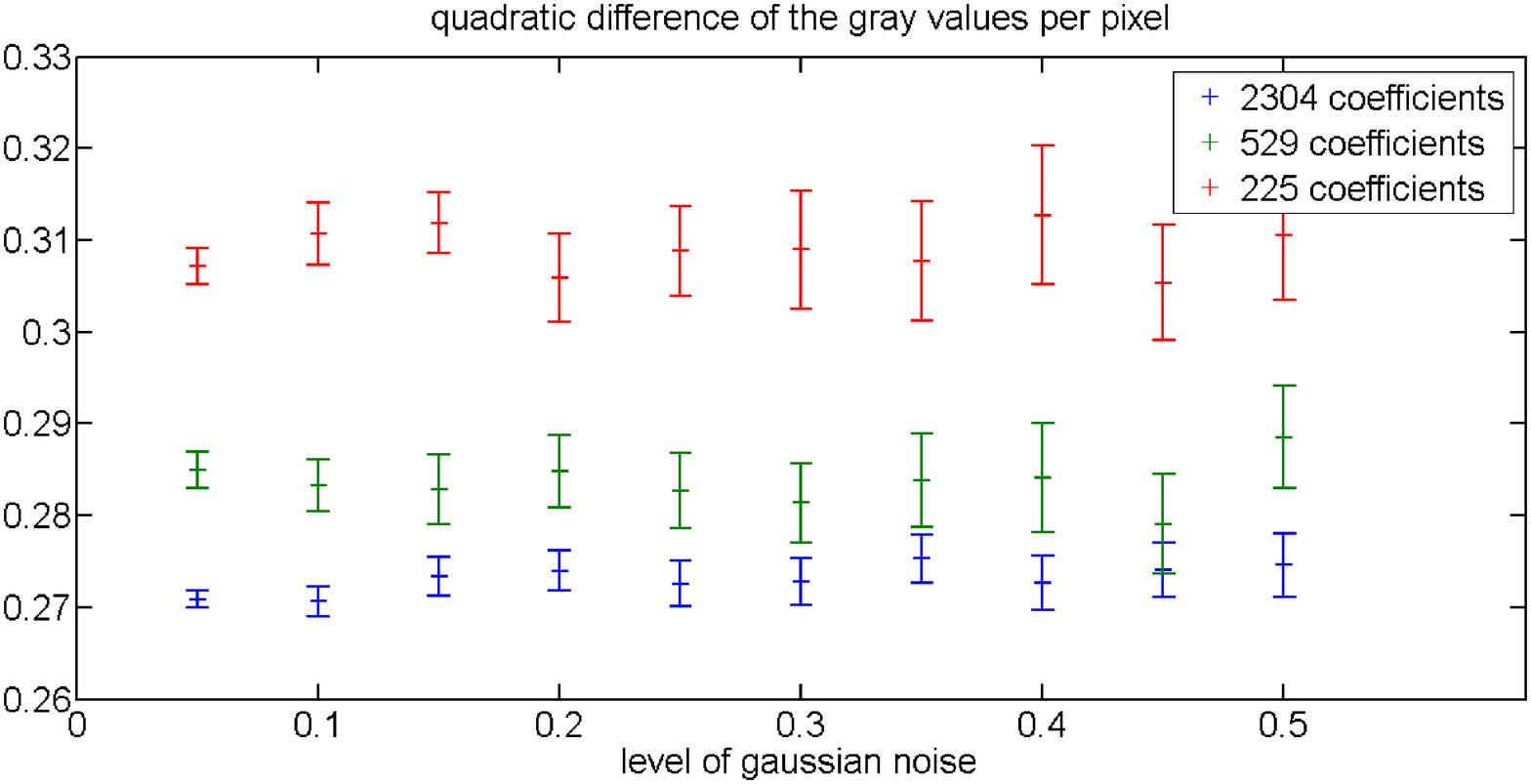}
\end{minipage}
\end{tabular}
\caption{\footnotesize Left: Template image with $50\%$ gaussian noise. Right: For the gaussian noise from $5\%$ to $50\%$ we plot the mean and the $90\%$ confidence interval of the error (\ref{mass}), detected by the SB-algorithm.\label{Rauschen}}
\end{figure}

In the next experiment, the influence of noise to the result of the SB-Newton algorithms is studied. As before, we take the upper left image in Fig.~\ref{C_SA(2)} as the reference $f$ with $250\times 250$ pixel and gray value in the range $[0,932]$. To construct the template image $g$, we perform a rotation of the reference around its center with $11.5^{\circ}$ and add gaussian noise from $5\%$ to $50\%$. (I.e. the variance of the noise is between $0.05$ and $0.5$ after rescaling the range of the image to the interval $[0,1]$.) The left-hand side of Fig.~\ref{Rauschen} shows the template perturbed with the biggest noise level. For each noise level we project the reference and the template image to a spline function space and use the SB-algorithm to detect the transformation. For the detected and the exact transformation $M_{\mbox{\tiny detect}}$, $M_{\mbox{\tiny exact}}$, we measure the discrepancy as 
\begin{eqnarray}
 \frac{1}{250^2}\sum_i\big(f(PM_{\mbox{\tiny detect}}\tilde{x}_i)-f(PM_{\mbox{\tiny exact}}\tilde{x}_i)\big)^2,\label{mass}
\end{eqnarray}
where the sum is over all pixel of the image. This can be seen as the averaged quadratic difference of the gray value. For each noise level we consider three different cases of spline function spaces, with $15\times 15$, $23\times 23$ and $48\times 48$ coefficients. We repeat each experiment $50$ times and evaluate (\ref{mass}) for each detected transformation $M_{\mbox{\tiny detect}}$. On the right-hand side of Fig.~\ref{Rauschen} we plot the appropriate mean and the $90\%$ confidence interval. As in the previous experiments, we observe a systematical error caused by the spline-approximation. In comparison to this, the additional error caused by the gaussian noise is negligible small, even for big variances. This is not surprising, since the spline approximation of the image is performed with respect to the `sum of squared difference'-norm, known to be the unbiast estimator in the case of gaussian noise. Another result of this experiment is that the systematic error of the spline-approximation is very small (the range of the images is $[0,932]$) as we haves already pointed out in Table~\ref{C_SE(3)}. This underlines a known fact in image processing, namely that we lose less information of an image by a spline approximation than by of a standard interpolation method (cf.~\cite{640-4}). Moreover, if we want to detect the exact transformation perfectly, we have to incorporate the algorithms in a pyramidal approach in which we gradually increase the dimension of the spline function space. This procedure is quite natural and already implemented in various registration algorithms (see e.g.~\cite{Viergever-Uerbersicht} and the references therein). 

\begin{figure}[t]
\begin{minipage}[m]{1.3\textwidth}
\begin{minipage}[l]{0.25\textwidth}
\centering
\includegraphics[height=4cm]{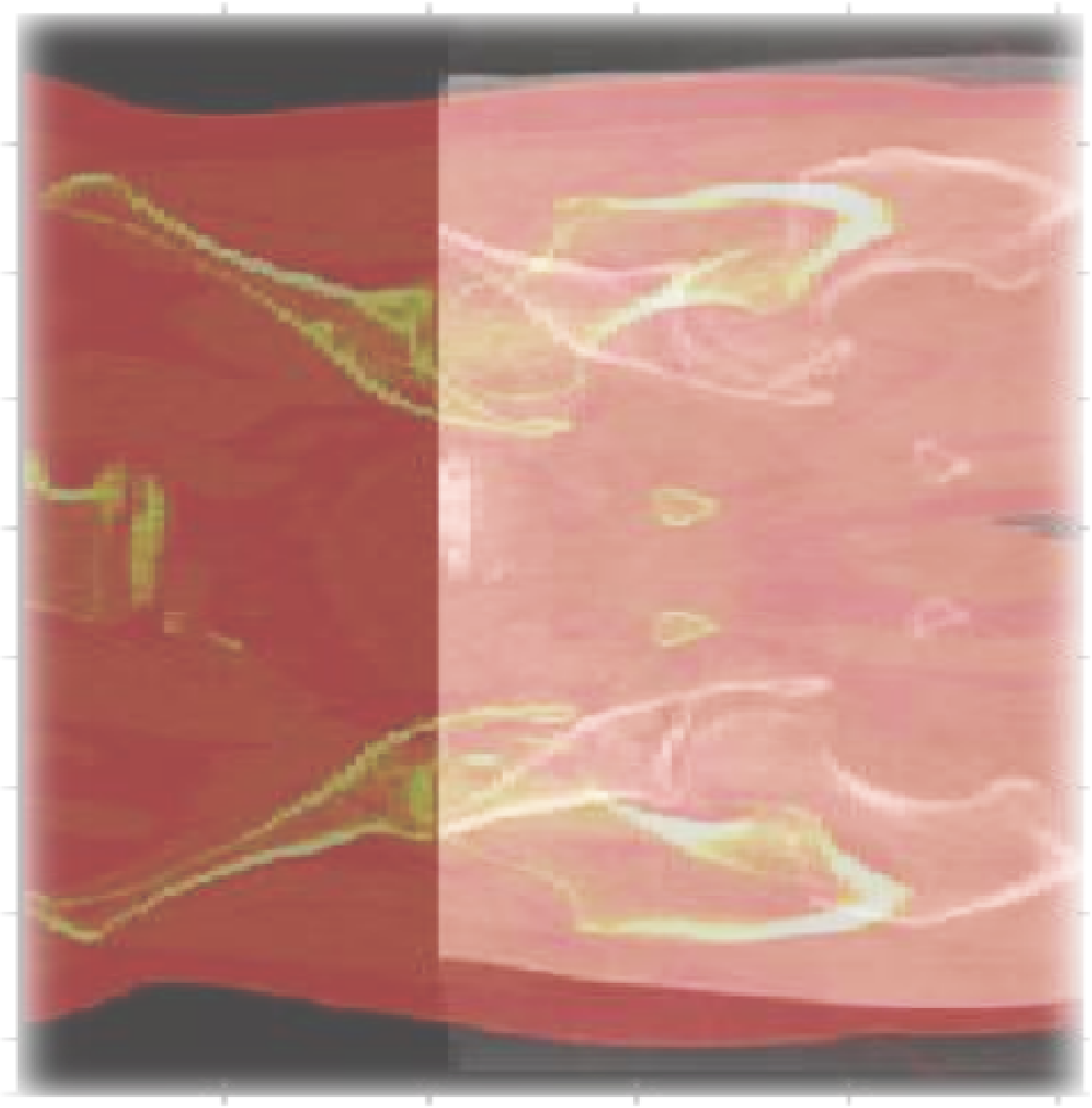}
\includegraphics[height=4cm]{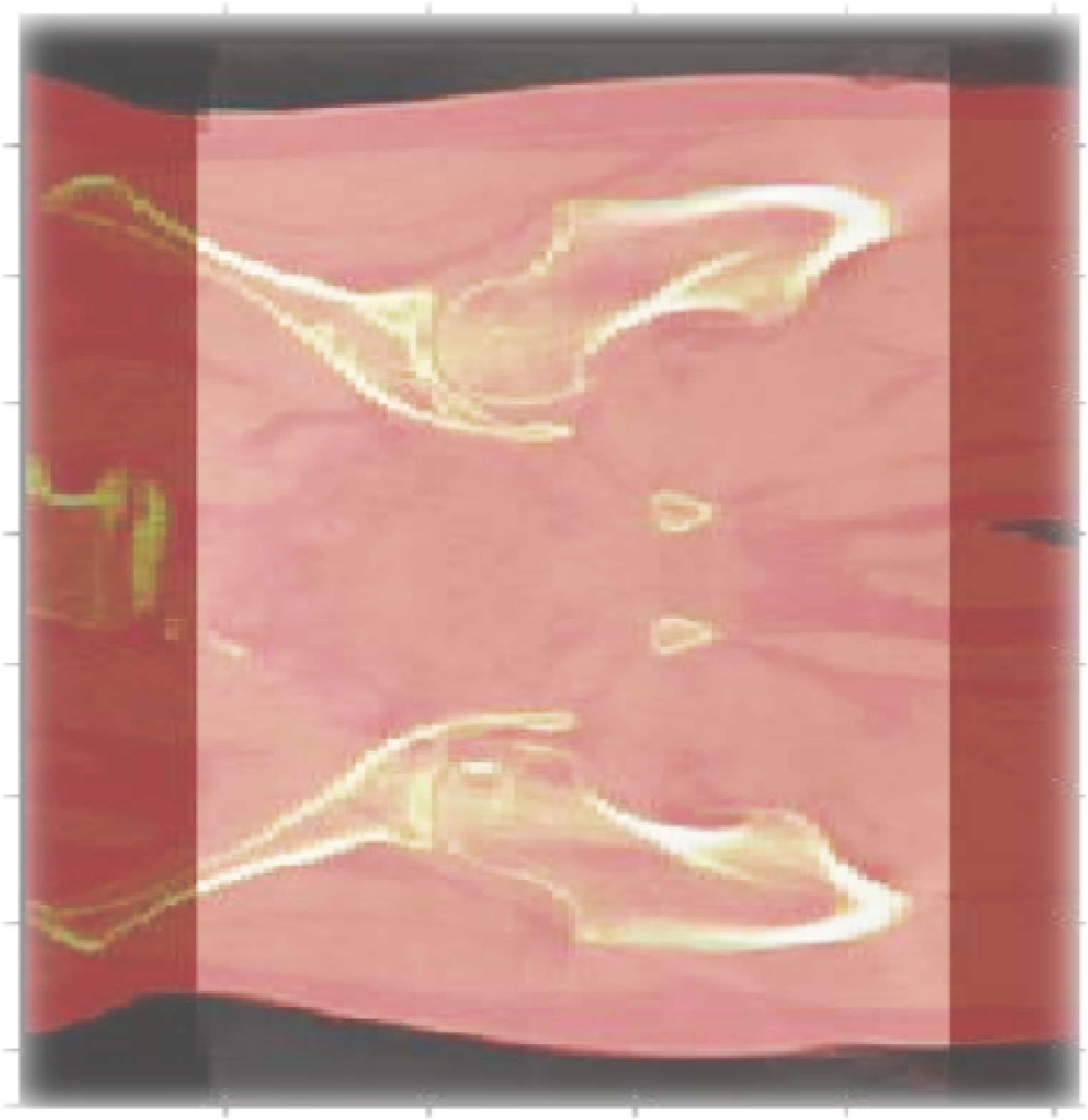}
\end{minipage}%
\begin{minipage}[m]{0.25\textwidth}
\centering
\includegraphics[height=4cm]{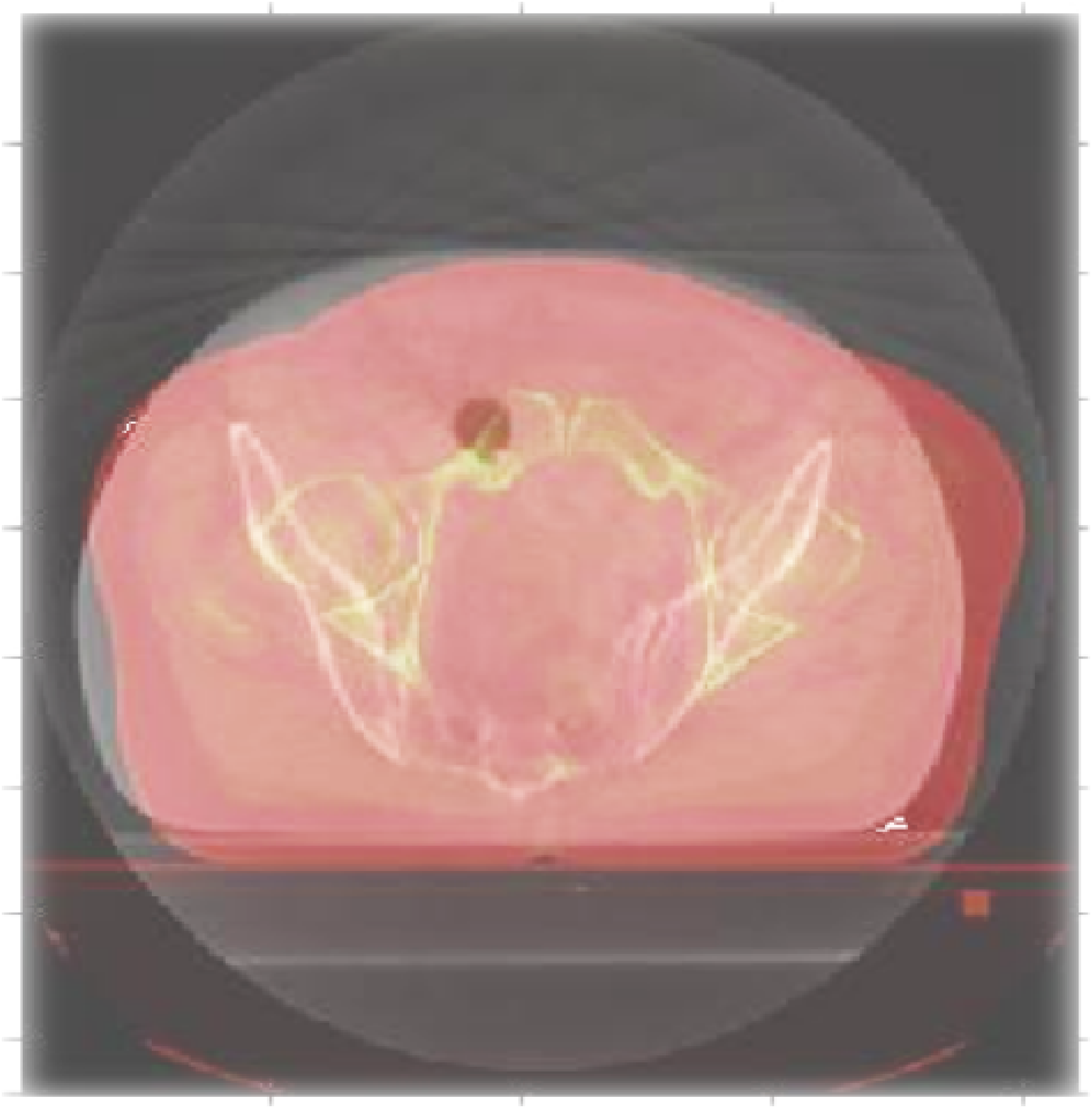}
\includegraphics[height=4cm]{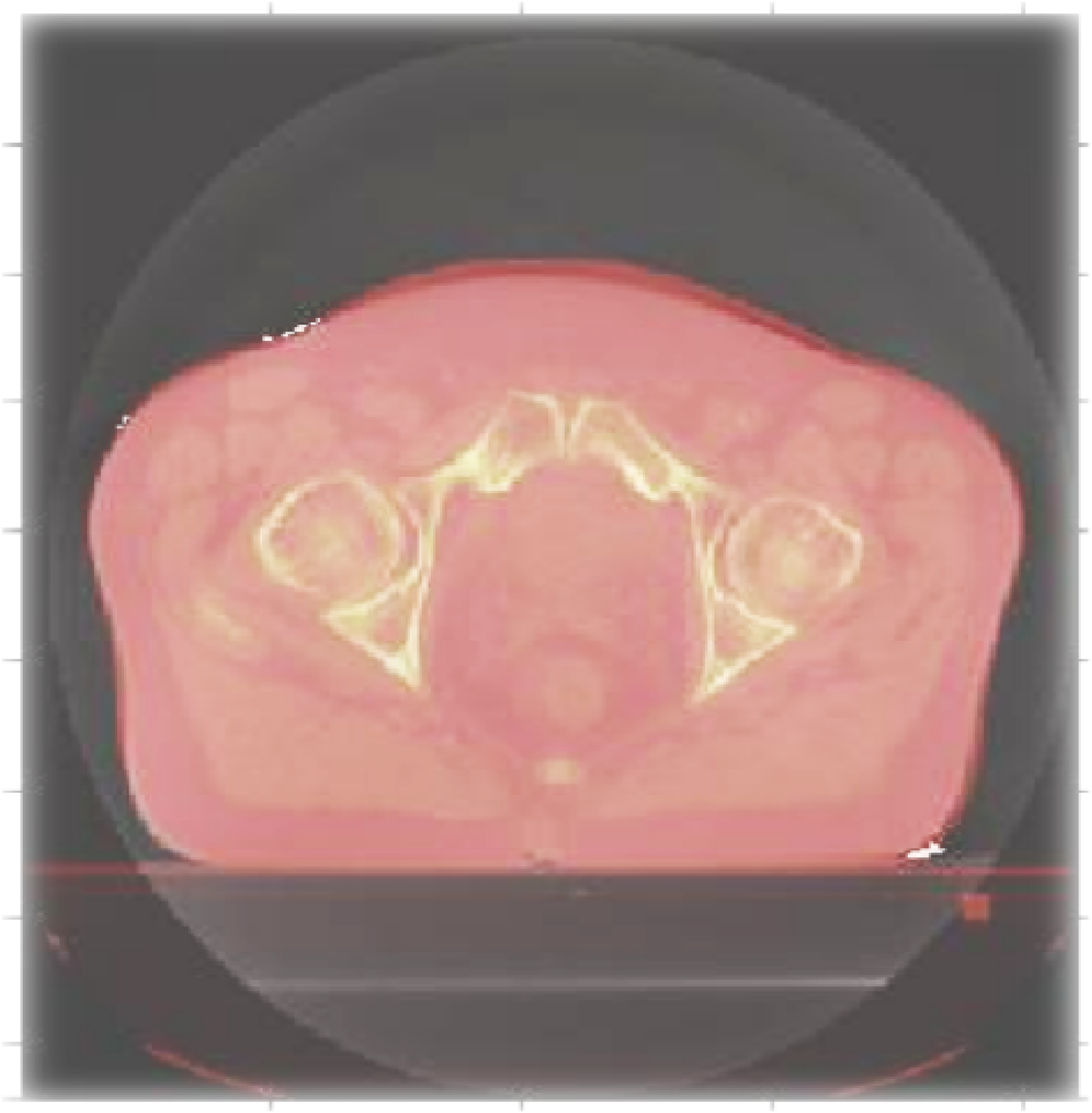}
\end{minipage}
\begin{minipage}[r]{0.25\textwidth}
\centering
\includegraphics[height=4cm]{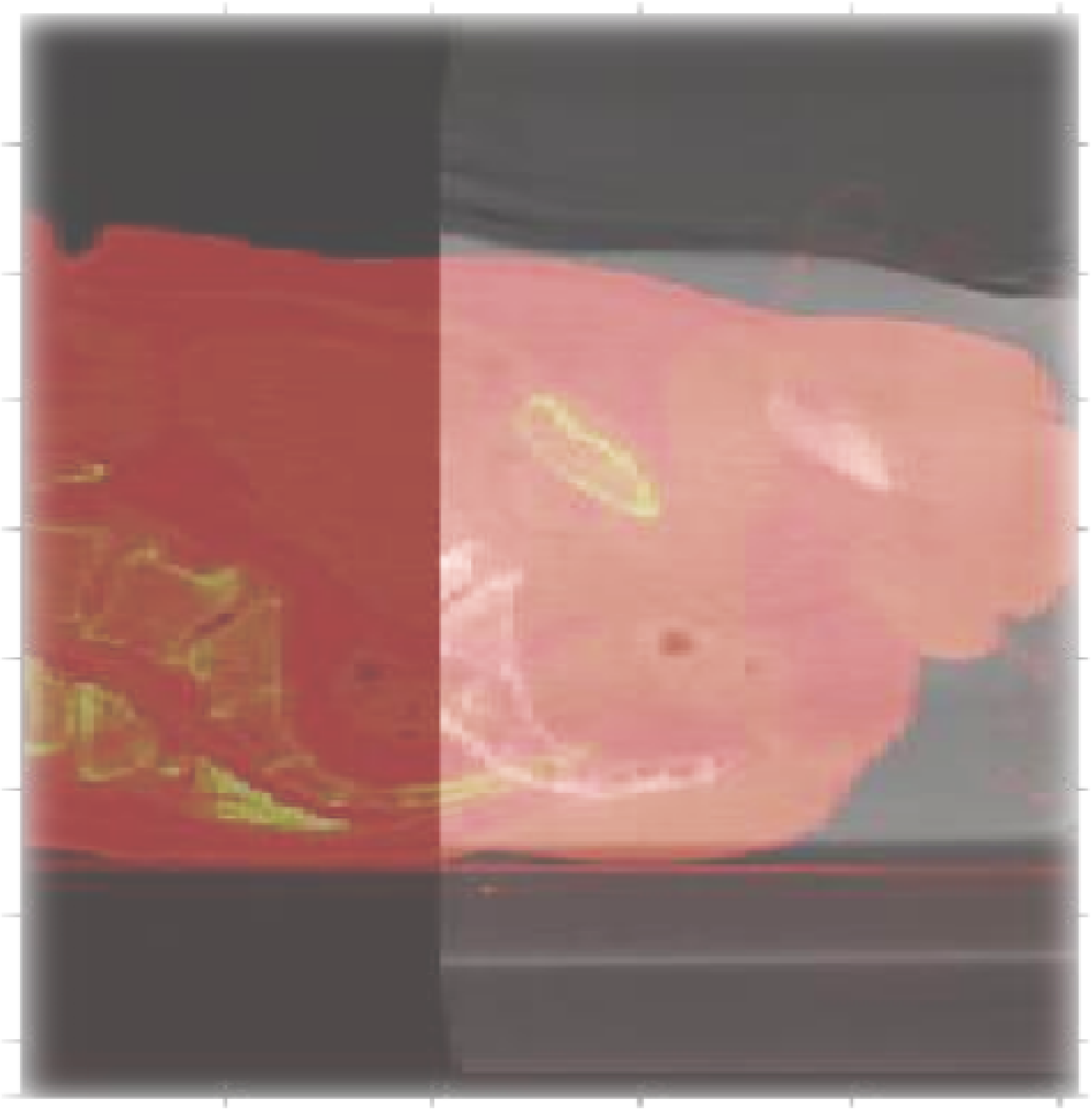}
\includegraphics[height=4cm]{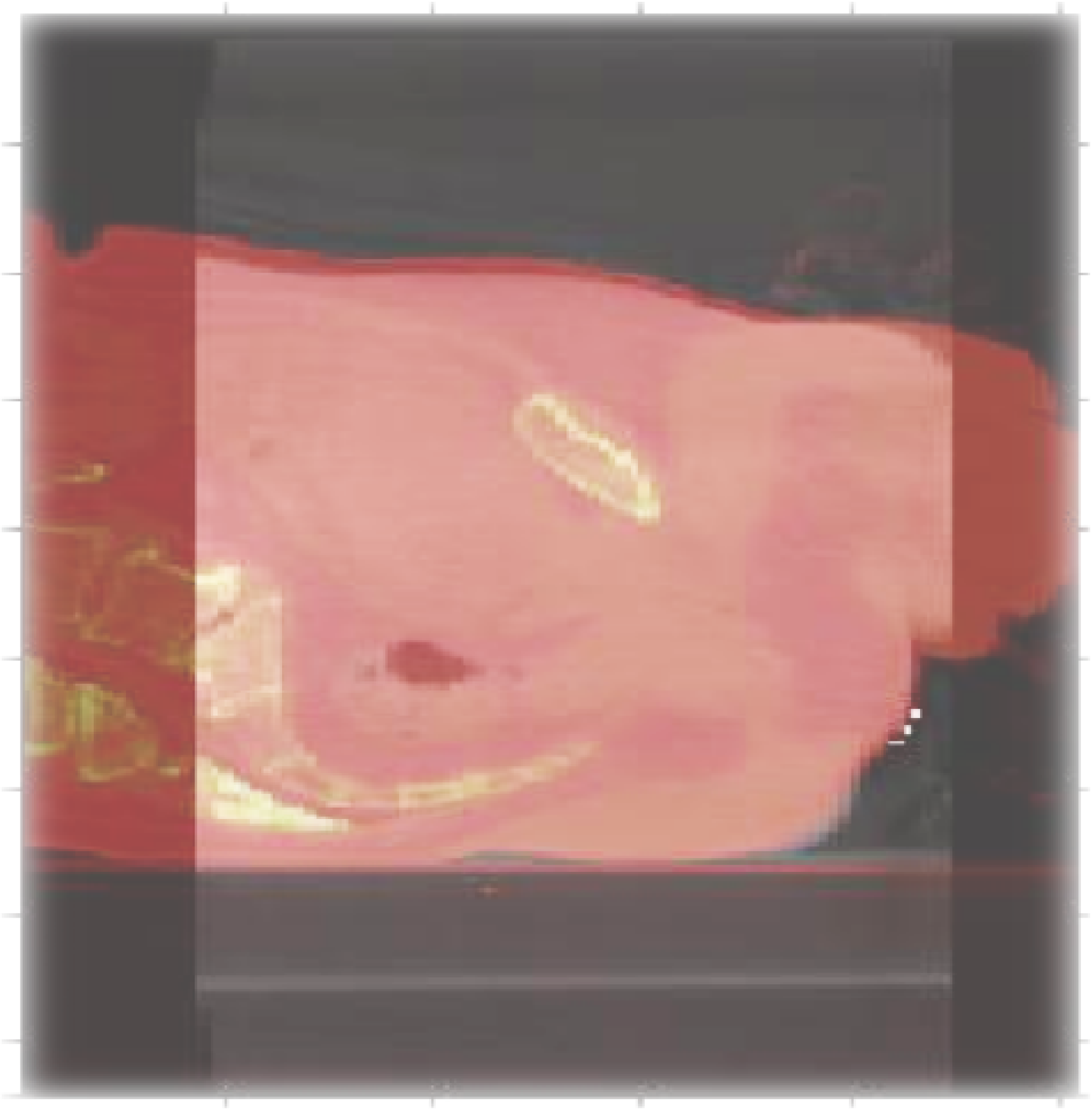}
\end{minipage}
\end{minipage}
\caption{\footnotesize The first row shows the initial position of the template with respect to the reference. The second row shows the astimated relative position after 12 quasi-Newton-Steps. We used a CBCT-image for the reference and a FBCT-image for the template.}
\label{C_CT}
\end{figure}

It turns out that comparing the numerical costs of the algorithms is a difficult task: In  \cite{Levenberg} the authors overcome the necessity of evaluating the gradient of one image in each iteration, but the numerical costs are of the order $N$, since they sum over all $N$ pixel on the image. In our QMC-approach we sum over a uniform distributed grid, which gives a certain (but hopefully negligible) error, which reduces the computational costs to $O((\log N)^2/N)$ in the case of 2D images. We want to point out that reducing the sum over all pixel to a sum over a grid with low discrepancy is also applicable for the LM-algorithms and leads to quite small approximation errors as in the QMC-Newton case. For the SB-Newton algorithm the computational costs are of the same order as the number of the spline-coefficients from the approximation. On the one hand, this number is often extremely small in comparison to the image size, since the results of the SB-Newton algorithm are more than sufficient even in a very course level of spline-approximation. On the other hand, for each spline coefficient, a series of B-splines has to be evaluated at different numbers (cf. \ref{Fkt-splin}), which is why the costs per spline-coefficient are high. Therefore, the SB-Newton algorithm becomes very slow for a fine level of spline-approximation.

Now, we demonstrate the algorithms with the help of some medically relevant pictures. In Fig 5.4, reference and template are FBCT and CBCT shots of the prostate area. These two datasets consist of $420\times 420\times 72$ and $512\times 512\times 101$ voxels respectively. We will not use a priori information about their relative positions to each other. Instead, we dispose the reference on the lower part of the template (see also the first row of Fig. \ref{C_CT}).

The beginning of the registration consists of comprising the data set to $26\times 26\times 25$ spline coefficients each. The second row of Fig. 5.4 shows the result of the registration after 12 SB-Newton steps. Any further steps only provide translations below the size of a voxel and rotations under 0.01 degree.

\begin{figure}[t]
\begin{minipage}[m]{1.3\textwidth}
\begin{minipage}[l]{0.25\textwidth}
\centering
\includegraphics[height=4cm]{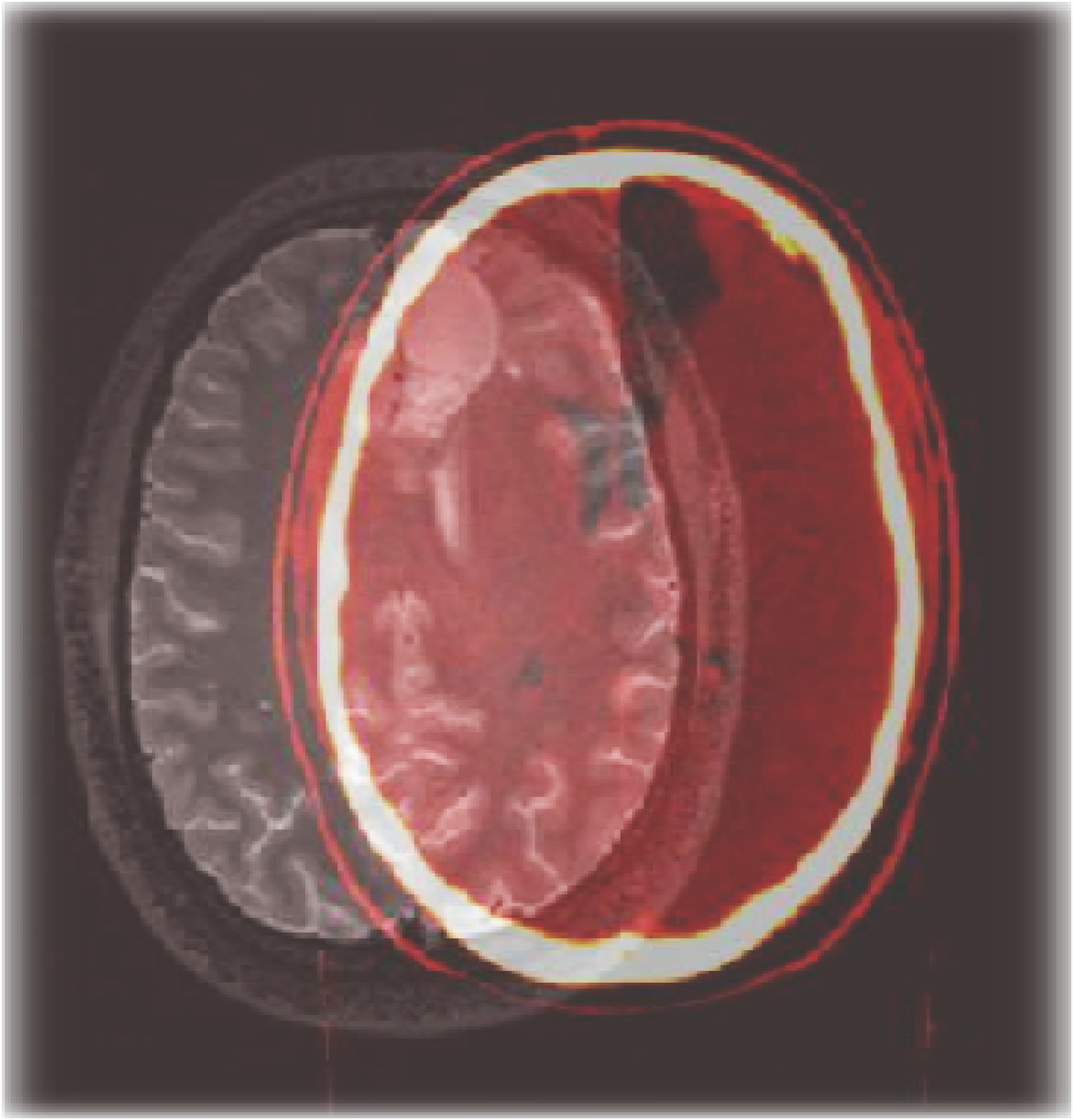}
\includegraphics[height=4cm]{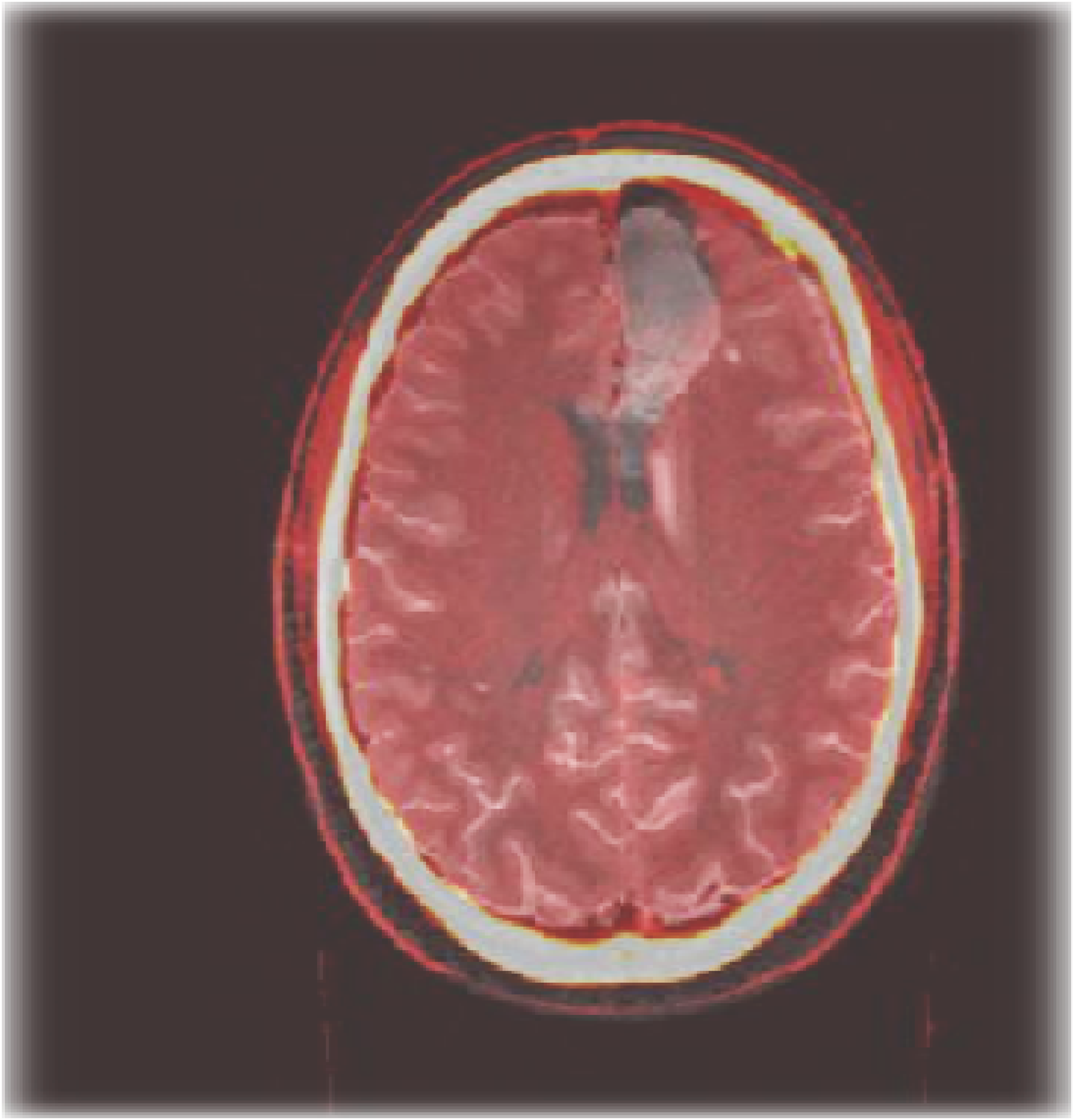}
\end{minipage}%
\begin{minipage}[m]{0.25\textwidth}
\centering
\includegraphics[height=4cm]{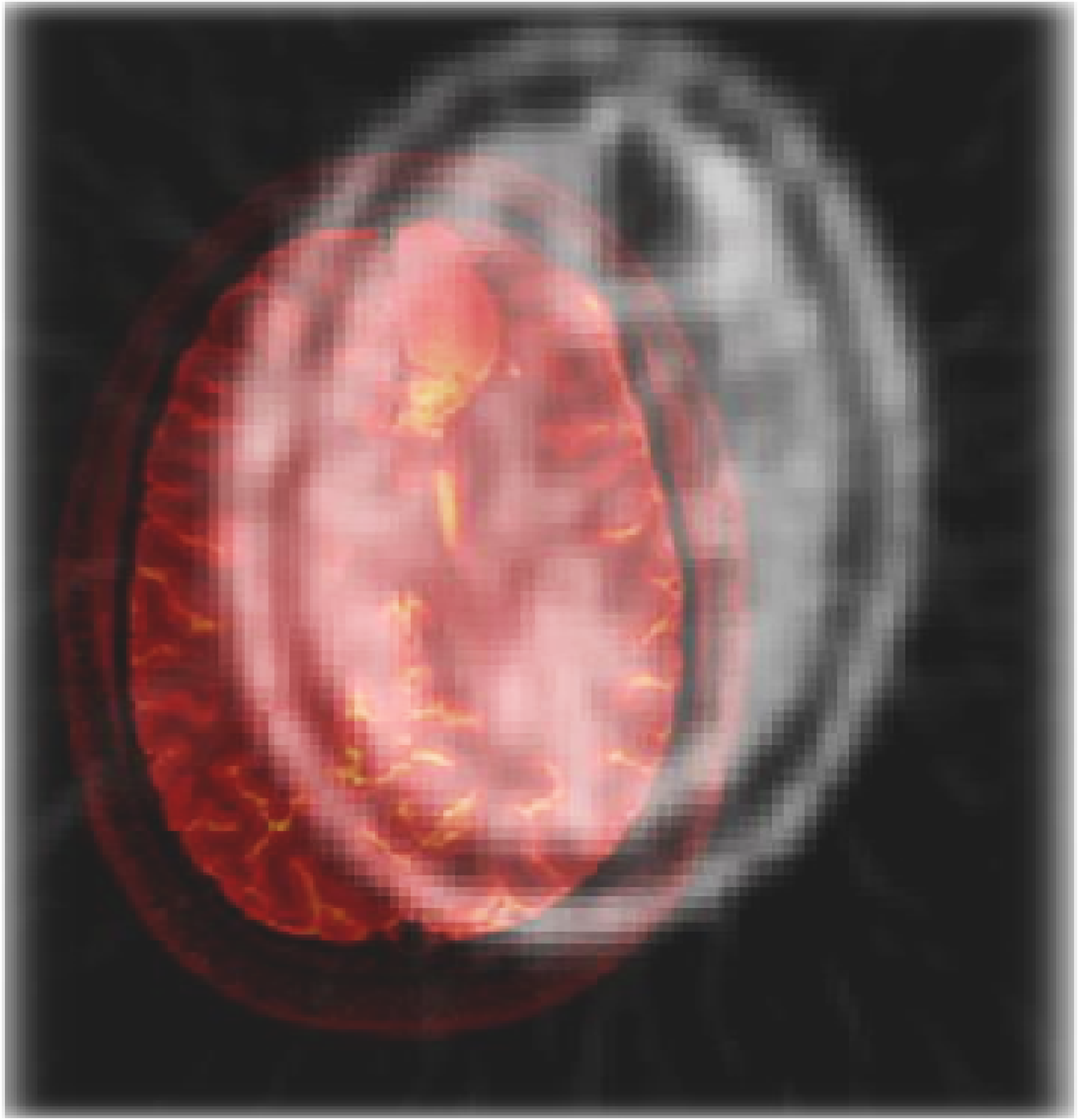}
\includegraphics[height=4cm]{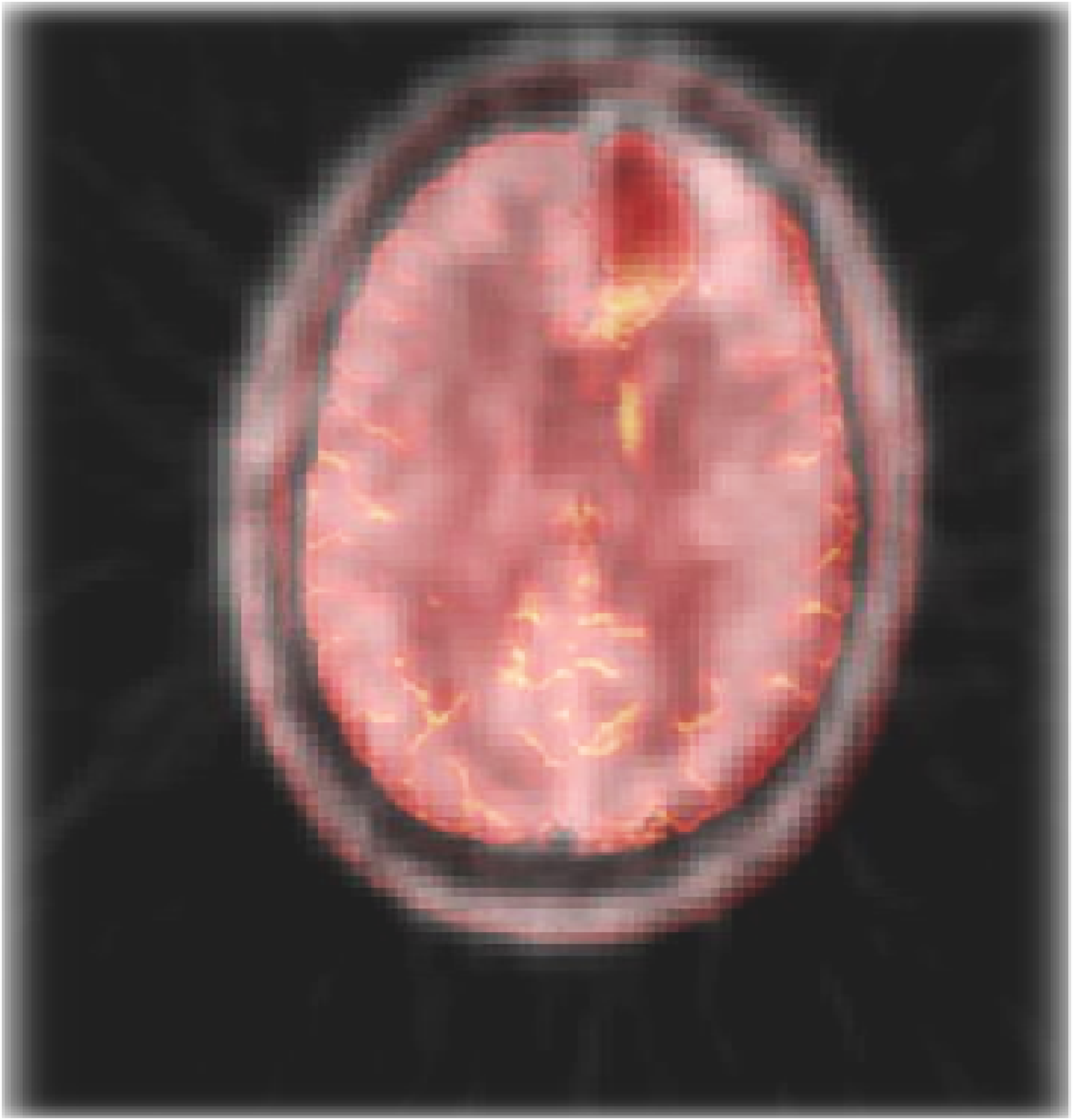}
\end{minipage}
\begin{minipage}[r]{0.25\textwidth}
\centering
\includegraphics[height=4cm]{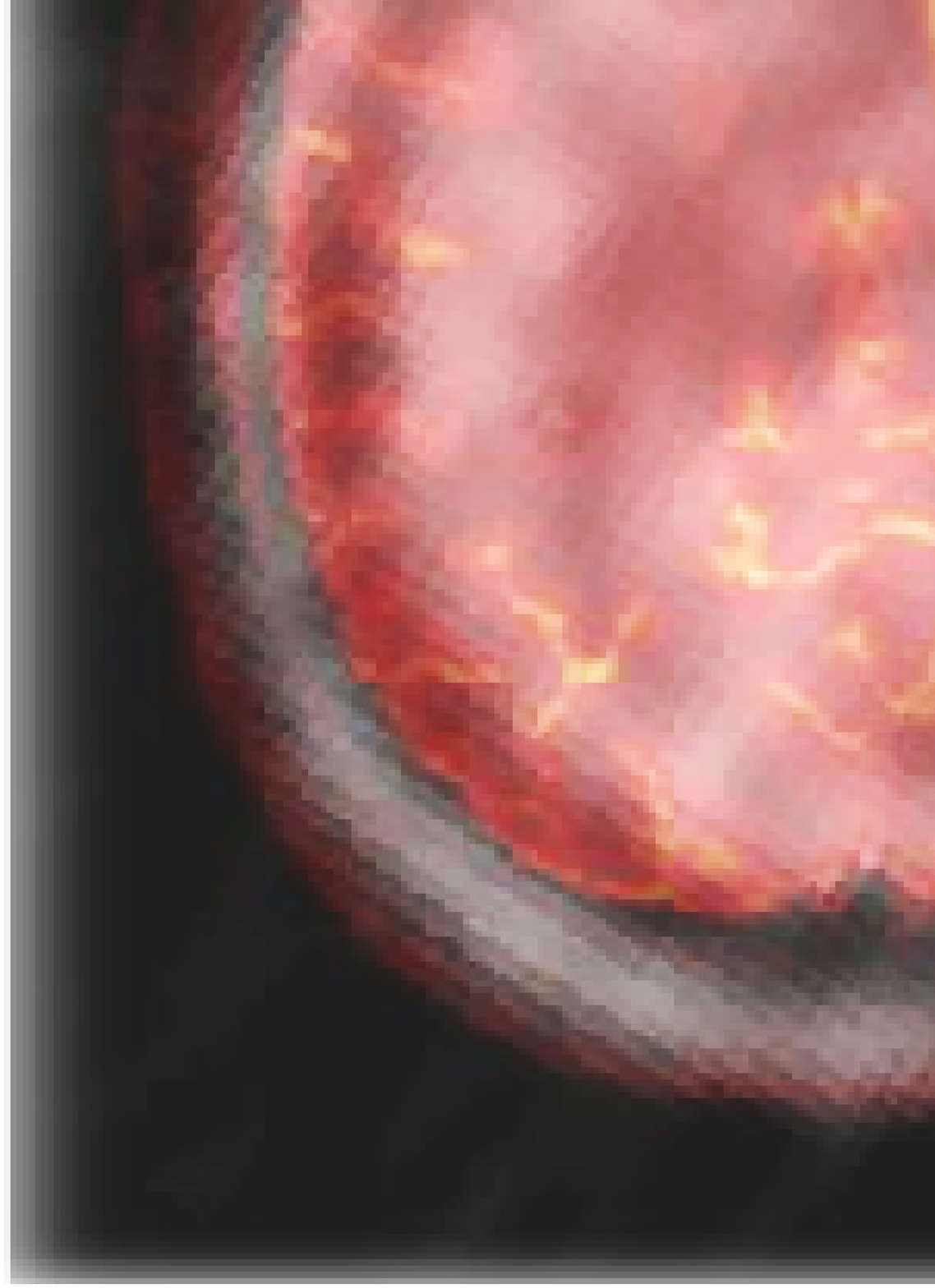}
\includegraphics[height=4cm]{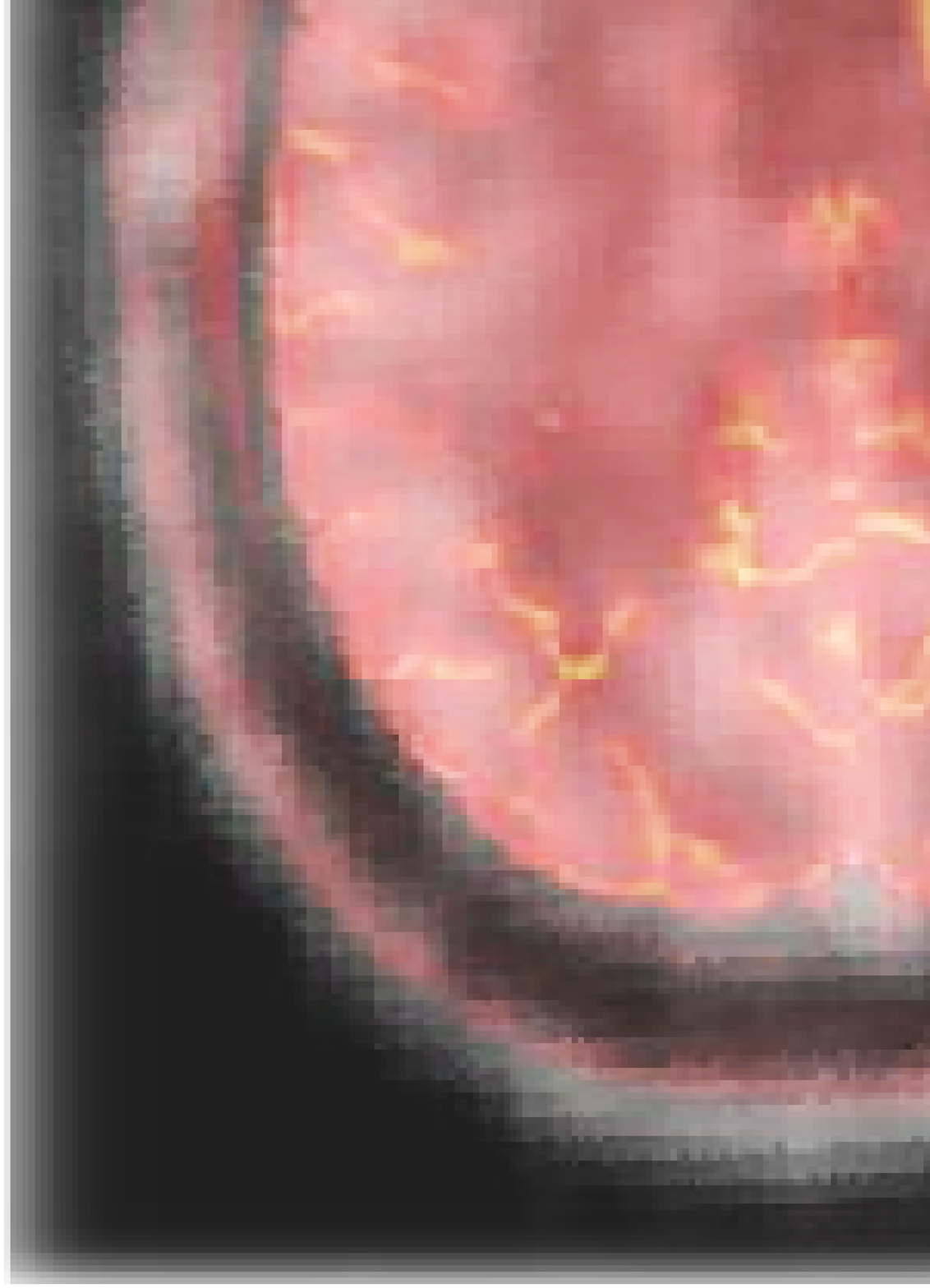}
\end{minipage}
\end{minipage}
\caption{\footnotesize The first row shows the initial situation of three registration problems. In the first one we compare a CT- with an NMR-image and in the second and third, we compare NMR- with PET-images. The second row presents the result of mutual information-based algorithm after ten quasi-Newton-steps.}
\label{C_PET}
\end{figure}

Due to the great similarity between the two recording methods the SB-Newton algorithm on SE(n) was used for the registration in the last example. With the help of the procedure described in section 4.3 one is also able to use the algorithm for data which have been recorded with very different modalities. We will show this in the next example by comparing a MR picture with a CT (each with $512\times 512$ Pixels) by using a PET- picture ($128\times 128$ Pixels) and used the ''Mutual Information-based Registration Algorithm on SE(n)``. The first row of Fig. \ref{C_PET} shows the origin of the correlating registration problems, and the second one reveals the result after 10 steps. We used a compression of  $64\times 64$ spline coefficients for each case mentioned. In the last column of Fig. \ref{C_PET} we more-over add an additional rotation of 30 degrees. Once more, we have reached a good match of the pictures within 10 steps. We already pointed out that this method only provides a local convergence. If the pictures differ in their origin - the algorithm won’t provide an acceptable result – even if tried using many steps. This happens if we rotate the PET- picture – as seen on third column in Fig. \ref{C_PET} –  more than 30 degrees.

\section{Conclusions}
\begin{sloppypar}
We developed a novel framework for the rigid or volume-preserving registration of two real valued functions. For this we used a modified $(\mu,\nu)-$Newton algorithm to solve the corresponding optimization problem on the manifolds $SE(n)$ or $SA(n)$. The local parameterizations of these manifolds are chosen in such a way to get a very efficient and easily implementable algorithm. Additionally, we proved the local quadratic convergence of this method under suitable generic conditions.
\end{sloppypar}

In order to apply this framework to the image registration task, we offered two strategies. The QMC-Newton compares two images in a sequence of points with low discrepancy. The appearing cost function could then be easily approximated by the Quasi Monte Carlo method. The SB-Newton strategy uses B-spline approximation of the images. Here, no image evaluations are necessary, the algorithms operate directly on the compressed (jpeg-like) data. Our numerical tests showed that both strategies preserve a high accuracy even in the case of high compressed data. Additionally, we confirmed the local quadratic convergence of both methods in numerical experiments. Comparatively, the QMC-Newton step is done in less computational time than a SB-Newton step - at least in our implementation. But it turned out that the second method has a higher accuracy in detecting the requested transformation.

\section{Appendix: Generic convergence conditions}
In Theorem 3.4 we proved the local quadratic convergence of the QMC-algorithm on the condition that the critical points of the cost functions are nondegenerate. In this appendix we will show that this condition is generically fulfilled in image registration,  i.e. on very mild conditions for the reference image $g\in\C^3(\R^n,\R)$, the set of all template images $f\in C^3(\R^n,\R)$ for which the algorithm converges locally quadratically is open and dense in $C^3(\R^n,\R)$ in terms of the strong topology\footnote{For a definition of the weak and strong topology of function spaces we refer to\cite{Hirsch}.}.

To begin with, we have to introduce some aspects of the transversality theory.
We refer to \cite{Hirsch} for a more detailed discussion. Let $M,N$ be manifolds and $A\subset N$ a submanifold of $N$. A differential map $f:M\rightarrow N$ is \textit{transverse} to $A$ and one writes $f\pitchfork A$ if
\begin{eqnarray}
 A_y+T_xf(M_x)=N_y\nonumber
\end{eqnarray}
whenever $f(x)=y\in A$. That is, the tangent space of $N$ at $y$ is spanned by the
tangent space of $A$ at $y$ and the tangent space of $M$ at $x$.  The following two theorems are well known in  transversal theory, their proofs can be found e.g. in \cite{Hirsch}.

\begin{theorem}
 Let $f:M\rightarrow N$ be a $C^r$ map, $r\geqslant 1$ and $A\subset N$ a $C^r$ submanifold. If $ f$ is transverse to A then $f^{-1}(A)$ is a submanifold of $M$. The codimension of $f^{-1}(A)$ is the same as the codimension of $A$ in $N$.
\end{theorem}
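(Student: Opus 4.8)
The plan is to reduce the global assertion to a purely local statement and then invoke the regular value (submersion) theorem. Since being a submanifold is a local property, it suffices to produce, around each point $x\in f^{-1}(A)$, a chart of $M$ in which $f^{-1}(A)$ appears as a coordinate subspace of the correct codimension. First I would fix $x\in f^{-1}(A)$, put $y=f(x)\in A$, and write $k=\dim N-\dim A$ for the codimension of $A$. Exploiting that $A$ is a $C^r$ submanifold, I choose a slice chart $\phi:V\to\R^{\dim N}$ of $N$ around $y$ with $\phi(y)=0$ and
\[
\phi(A\cap V)=\phi(V)\cap(\R^{\dim A}\times\{0\}),
\]
so that $A$ is locally cut out by the vanishing of the last $k$ coordinates. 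Letting $\pi:\R^{\dim N}\to\R^{k}$ be the projection onto these last $k$ coordinates, I define on $U:=f^{-1}(V)$ the $C^r$ map
\[
g:=\pi\circ\phi\circ f:U\to\R^{k},
\]
which by construction satisfies $f^{-1}(A)\cap U=g^{-1}(0)$.

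The key step, and the heart of the argument, is to show that the hypothesis $f\pitchfork A$ is precisely equivalent to $0$ being a regular value of $g$, i.e.\ that $Dg_{x}$ is surjective for every $x\in g^{-1}(0)$. Here I would use that $\ker D(\pi\circ\phi)_{y}=A_y$: indeed $\pi\circ\phi$ vanishes identically on $A\cap V$, so $A_y\subseteq\ker D(\pi\circ\phi)_y$, and a dimension count (both sides have codimension $k$ in $N_y$) forces equality. Consequently $D(\pi\circ\phi)_{y}$ descends to an isomorphism $N_y/A_y\xrightarrow{\ \sim\ }\R^{k}$. The transversality condition $A_y+T_xf(M_x)=N_y$ says exactly that $T_xf(M_x)$ surjects onto the quotient $N_y/A_y$; applying the above isomorphism, this is the surjectivity of $Dg_x=D(\pi\circ\phi)_{y}\circ T_xf$. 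I expect the tangent-space bookkeeping in this identification $\ker D(\pi\circ\phi)_y=A_y$ to be the only genuine obstacle; everything else is routine.

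With $0$ established as a regular value of $g$, the regular value theorem (itself a repackaging of the implicit function theorem) shows that $g^{-1}(0)=f^{-1}(A)\cap U$ is a $C^r$ submanifold of $U$ of codimension $k$ and supplies an adapted chart. Since $x\in f^{-1}(A)$ was arbitrary, these local submanifold charts are mutually compatible and assemble into a $C^r$ submanifold structure on $f^{-1}(A)$, with constant codimension $k=\operatorname{codim}_N A$. This yields both conclusions of the theorem simultaneously.
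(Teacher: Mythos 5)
Your proof is correct: the localization via a slice chart for $A$, the identification $\ker D(\pi\circ\phi)_y=A_y$ by a dimension count, the resulting equivalence of $f\pitchfork A$ with $0$ being a regular value of $g=\pi\circ\phi\circ f$, and the appeal to the regular value theorem are all sound, and you correctly note that surjectivity of $Dg$ is needed at every point of $g^{-1}(0)$, which transversality supplies. The paper does not prove this statement itself but cites Hirsch for it, and your argument is precisely the standard one given there, so there is nothing to compare beyond noting the agreement.
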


\begin{theorem}[Parametric Transversality]
 Let $V,M,N$ be $C^r$ manifolds without boundary and $A\subset N$ is a $C^r$ submanifold. Let $F:V\rightarrow C^r(M,N)$ satisfy the following conditions:
\begin{itemize}
 \item[(a)] the evaluation map $F^{ev}:V\times M \rightarrow N,(v,x)\mapsto F_v(x)$ is $C^r$.
\item[(b)] $F^{ev}$ is transverse to $A$.
\item[(c)] $r>\mbox{max} \left\lbrace 0, \mbox{dim }N +\mbox{dim }A -\mbox{dim }M \right\rbrace $. 
\end{itemize}
Then the set 
$$\pitchfork (F;A):=\left\lbrace v\in V~|~F_v\pitchfork A\right\rbrace $$
is residual and therefore dense. If $A$ is closed in $N$ and $F$ is continuous for the strong topology on $C^r(M,N)$, then $\pitchfork (F,A)$ is also open.
\end{theorem}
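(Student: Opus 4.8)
The plan is to reduce the assertion to Sard's theorem applied to a single projection, the classical route to parametric transversality. First I would feed the evaluation map $F^{ev}\colon V\times M\to N$ into the preimage theorem stated just above: since $F^{ev}\pitchfork A$ by hypothesis (b), the set $W:=(F^{ev})^{-1}(A)$ is a $C^r$ submanifold of $V\times M$ of codimension $\dim N-\dim A$, so that $\dim W=\dim V+\dim M-\dim N+\dim A$. The restriction $\pi:=\operatorname{pr}_V|_W\colon W\to V$ of the canonical projection $V\times M\to V$ is then a $C^r$ map whose relative dimension is $\dim W-\dim V=\dim M+\dim A-\dim N$.

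The crux of the argument --- and the step I expect to be the main obstacle --- is the linear-algebra lemma asserting that $F_v\pitchfork A$ holds exactly when $v$ is a regular value of $\pi$. To establish it I would fix $(v,x)\in W$, write $y=F^{ev}(v,x)\in A$, set $L:=DF^{ev}(v,x)\colon T_vV\oplus T_xM\to T_yN$, and let $\rho\colon T_yN\to T_yN/T_yA$ be the quotient map. Transversality of $F^{ev}$ says $\rho L$ is onto, and the preimage theorem identifies $T_{(v,x)}W=\ker(\rho L)$, on which $D\pi$ is the projection to $T_vV$. A short chase then shows this projection is surjective iff every $\rho L(\xi,0)$ lies in $\operatorname{im}(\rho L|_{T_xM})$, which (because $\rho L$ is onto) happens iff $\rho L|_{T_xM}=\rho\,DF_v(x)$ is itself onto, i.e. iff $T_yA+DF_v(x)(T_xM)=T_yN$. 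Ranging over all $x$ with $F_v(x)\in A$ (and noting that $v\notin\operatorname{im}\pi$ forces $F_v\pitchfork A$ vacuously) yields the desired equivalence.

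With the equivalence secured I would apply Sard's theorem to $\pi$. Its hypothesis is precisely that $r$ exceeds the relative dimension $\dim W-\dim V$ of $\pi$, as furnished by (c); the critical values of $\pi$ then form a measure-zero set, and covering $W$ and $V$ by countable atlases promotes this to the statement that the regular values of $\pi$ are residual in $V$. By the lemma this residual set is precisely $\pitchfork(F;A)$; since a second-countable manifold is a Baire space, residual implies dense, giving the first conclusion.

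For the openness statement I would argue separately that the set $\{g\in C^r(M,N): g\pitchfork A\}$ is open in the strong topology whenever $A$ is closed --- a standard fact (see \cite{Hirsch}), with closedness of $A$ being what prevents transversality from degenerating in the limit. Since $F\colon V\to C^r(M,N)$ is strongly continuous by hypothesis, $\pitchfork(F;A)=F^{-1}(\{g:g\pitchfork A\})$ is the preimage of an open set under a continuous map, hence open. Everything except the regular-value lemma is thus an invocation of the preimage theorem, Sard's theorem, and the openness of strong transversality.
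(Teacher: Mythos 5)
The paper offers no proof of this statement at all: it is quoted verbatim as a classical result with the proof deferred to \cite{Hirsch}, so there is no internal argument to compare yours against. What you have written is precisely the standard Hirsch-style proof, and it is correct: the preimage theorem applied to $F^{ev}$ produces the submanifold $W=(F^{ev})^{-1}(A)$, the linear-algebra lemma identifying regular values of the projection $\pi=\mathrm{pr}_V|_W$ with parameters $v$ for which $F_v\pitchfork A$ is the heart of the matter and your kernel chase for it is sound (including the vacuous case $v\notin\pi(W)$), Sard's theorem plus a countable exhaustion gives residuality, and the openness claim follows from strong continuity of $F$ together with the openness of $\{g\in C^r(M,N):g\pitchfork A\}$ for closed $A$. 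One point you should not gloss over: you assert that hypothesis (c) furnishes exactly the Sard bound $r>\max\{0,\dim W-\dim V\}=\max\{0,\dim M+\dim A-\dim N\}$, but (c) as printed in the paper reads $r>\max\{0,\dim N+\dim A-\dim M\}$, with $M$ and $N$ transposed relative to what your (correct) computation requires and relative to the statement in \cite{Hirsch}. Your proof silently uses the right quantity; you should flag the discrepancy rather than claim the stated (c) is "precisely" what Sard needs, since the two bounds differ whenever $\dim M\neq\dim N$.
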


To give the degenerated critical points a geometric interpretation, we also have
to introduce the manifold $J^r(M,N)$ of $r$-jets of functions from $M$ to $N$. A $r$-jet from $M$ to $N$ is an equivalence-class $[x,f,U]_r$  of a triple in which $U\subset M$ is an open subset, $x\in M$ and $f:U\to N$ is an $C^r$ map. We say that two triples $[x,f,U]_r$ and $[x',f',U']_r$ are equivalent if $x=x'$, and $f$ and $f'$ have same derivatives in $x$ up to the order $r$. We use the notation $j_x^rf:=[x,f,U]_r$ to denote the $r$-jet of $f$ in $x$ and $j^rf:M\to J(M,n)$ defines the $r$-prolongation map $x\mapsto j^r_xf$. The following theorem
can be found in \cite{Hirsch}.

\begin{theorem}[Jet Transversality Theorem]
 Let $M$, $N$ be $C^{\infty}$ manifolds without boundary, and let $A\subset J^r(M,N)$ be a $C^{\infty}$ submanifold. Suppose $1\leqslant r<s\leqslant\infty$. Then $$\pitchfork^s(M,N;j^r,A):=\left\lbrace f\in C^s(M,N)~\big|~j^rf\pitchfork A \right\rbrace $$ is residual and thus dense in $C_S^s(M,N)$, and open if $A$ is closed.
\end{theorem}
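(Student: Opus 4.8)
The plan is to deduce the statement from the Parametric Transversality Theorem stated above, after reducing the global assertion to a local one in coordinate charts. The guiding idea is that a generic function can be made transverse to $A$ by adding a small polynomial perturbation, since polynomials of degree at most $r$ give independent control over the value of $f$ and all of its partial derivatives up to order $r$ at any prescribed point.

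First I would localize. Choose charts identifying a neighbourhood in $M$ with an open set $U\subset\R^m$ and the target with an open set in $\R^n$; over such a chart the jet bundle trivialises as $J^r(U,N)\cong U\times N\times P^r$, where $P^r$ is the finite-dimensional space of $r$-jets at a point. For a fixed $f\in C^s$ I would take the parameter space $V=P^r$ of polynomials $p_v$ of degree at most $r$ and define the family $F:V\to C^s(U,N)$ by $F_v=f+p_v$, with evaluation map $F^{ev}(v,x)=j^r_x(f+p_v)$. The key point is that for each fixed $x$ the map $v\mapsto j^r_x(f+p_v)$ is an affine submersion onto the jet fibre: varying the coefficients of $p_v$ realises every perturbation of the value and of the derivatives up to order $r$ independently. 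Hence $F^{ev}$ is a submersion and, in particular, $F^{ev}\pitchfork A$ for every submanifold $A$.

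With hypotheses (a) and (b) of the Parametric Transversality Theorem in hand (smoothness and continuity of $F^{ev}$ are immediate, and $F^{ev}$ is a submersion), and noting that the $r$-fold prolongation of a $C^s$ map is $C^{s-r}$ with $s-r\geqslant 1$ by $r<s$, so that the smoothness-and-dimension requirement (c) can be met, I would conclude that $\{v\in V\mid j^r(f+p_v)\pitchfork A\}$ is residual, hence dense, in $V$. This yields local density: transversality to $A$ over a fixed compact chart-domain is achievable by an arbitrarily small $C^s$ perturbation of $f$. To globalize, I would fix a locally finite countable cover $\{K_i\}$ of $M$ by compact sets contained in charts, use that transversality over a compact set is an open condition, and employ a partition of unity so that the local perturbation near $K_i$ does not destroy the transversality already secured on $K_1,\dots,K_{i-1}$. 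The set of good $f$ over all of $M$ is then the countable intersection of the residual sets attached to the $K_i$, and since $C^s_S$ is a Baire space this intersection is again residual, hence dense. Openness when $A$ is closed follows because transversality to a closed submanifold is a strongly open condition.

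The main obstacle is the globalization rather than the local step. One must arrange the successive perturbations so that, in the strong (Whitney) topology, the perturbation supported near $K_i$ is small enough to preserve transversality already obtained on the earlier compacta while still landing in the residual set for $K_i$; since a strong neighbourhood constrains the function over all of the noncompact $M$ at once, controlling these perturbations simultaneously and verifying that the resulting countable intersection is genuinely residual is the delicate part. The locally finite cover together with the Baire property of $C^s_S$ is exactly what makes this succeed.
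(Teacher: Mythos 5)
You should first be aware that the paper does not prove this statement at all: Theorem 7.3 is quoted verbatim as a classical result with a pointer to Hirsch's \emph{Differential Topology}, so there is no in-paper argument to compare against. Your sketch follows the standard Thom/Hirsch route (local polynomial perturbations, the Parametric Transversality Theorem, then a Baire-category globalization over a countable family of compacta), which is exactly the proof the cited reference gives, so the overall strategy is the right one. Two of your worries are actually non-issues: in the ``countable intersection of open dense sets'' formulation one does not need to chain the perturbations so that later ones preserve transversality on earlier compacta --- each set in the intersection is handled independently and Baire does the rest --- and the only genuine bookkeeping there is to exhaust $A$ by compact pieces $L_j$ so that ``transverse to $A$ along $K_i$ at jets landing in $L_j$'' is an open condition even when $A$ is not closed.

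The genuine gap is your treatment of hypothesis (c) of the Parametric Transversality Theorem. You assert that since $j^r(f+p_v)$ is $C^{s-r}$ with $s-r\geqslant 1$, condition (c) ``can be met.'' It cannot, in general: (c) demands that the differentiability class exceed a dimension count (in the correct formulation, $\dim M$ minus the codimension of $A$ in $J^r(M,N)$), and when $A$ has small codimension this bound can be arbitrarily large, whereas $s-r$ may equal $1$. The underlying reason is Sard's theorem, which for a $C^k$ map requires $k$ to exceed the excess of the source dimension over the target dimension; applying it to the projection of $(F^{ev})^{-1}(A)$ onto the parameter space $V$ needs more regularity than the $C^1$ you have guaranteed. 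The standard repair, which your sketch omits, is to first reduce to $f\in C^{\infty}$: smooth maps are dense in $C^s_S(M,N)$, the polynomial perturbations of a smooth $f$ are smooth, so (c) holds trivially for the smooth family, and the resulting dense set of smooth transverse maps sits inside each open set ``transverse along $K_i$ at jets in $L_j$,'' establishing its density in $C^s_S(M,N)$. Without this reduction (or some substitute argument), the local density step of your proof does not go through for $s$ close to $r$.
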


Let us now come back to the Lie Groups $G=SA(n)$ and $G=SE(n)$ respectively. In the manifold $J^2(G,\R)$ the subset
\begin{eqnarray}
 \frak{U}=\left\lbrace j^2_xh~\big|~h\in C^2(G,\R),~x\in G,~\nabla h(x)=0,~\det \mbox{Hess}_h(x)=0  \right\rbrace \nonumber
\end{eqnarray}
contains all degenerated critical points. Furthermore, the set $\frak{U}$ is a finite union of submanifolds $\frak{U}=\frak{U}_1\cup\ldots\cup\frak{U}_m$ with
\begin{eqnarray}
 \dim\frak{U}_1\leqslant\ldots\leqslant\dim\frak{U}_m=\frac{3}{2}\dim G+\frac{1}{2}(\dim G)^2.\nonumber
\end{eqnarray}
Since we want to study the influence of the images $f,~g$ on the cost functions $\Psi$ and $\Phi$ of formula (\ref{Zielfunktion}) and (\ref{appsum}), we introduce the linear maps $\hat{\Psi},\hat{\Phi}:C^3(\R^n,\R)\to C^3(G,\R)$ which are defined by
\begin{align}
 \hat{\Psi}(f)(A,t)&:=\frac{1}{N}\sum\limits_{i=1}^Ng(x_i)f(Ax_i+t)\nonumber\\
\hat{\Phi}(f)(A,t)&:=\int_{\R^n}g(x)f(Ax+t)dx\nonumber.
\end{align}
For a given function $f\in C^3(\R^n,\R)$ the cost function $\hat{\Psi}(f)$ has no degenerated critical points if, and only if $j^2\hat{\Psi}(f)$ misses $\frak{U}$. Since 
\begin{align}
\dim J^2(G,\R)=\frac{5}{2}\dim G+\frac{1}{2}(\dim G)^2+1 \nonumber
\intertext{we get}
\dim G + \dim \frak{U}_i<\dim J^2(G,\R),~~~i=1,\ldots,m.\nonumber
\end{align}
By means of Theorem 7.1 we get that  $j^2\hat{\Psi}(f)$ misses $\frak{U}$ if, and only if  $j^2\hat{\Psi}(f)$ is transverse to every submanifold $\frak{U}_i$ $i=1,\ldots,m$. The same statement applies to the function $\hat{\Phi}$.

 \begin{theorem}
Suppose that the template image $g\in C^3(\R^n,\R)$ has compact support
and satisfy the following conditions: 
\begin{itemize}
 \item [a)] In the optimization problem (\ref{Zielfunktion}), $g$ is not identical to zero.
\item [b)] In the optimization problem (\ref{appsum}), there exists $k=\frac{1}{2}(n+1)(n+2)$ elements of the sequence $\left\lbrace x_i\right\rbrace _{i=1}^N$  which do not do not lie on a quadric hypersurface and $g(x_i)\neq 0$ for all these $k$  elements.
\end{itemize}
Then the cost functions $\Phi$ and $\Psi$ have no degenerate critical points, for a generic set of reference images $f\in C^3_S(\R^n,\R)$.
\end{theorem}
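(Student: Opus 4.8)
The plan is to treat the reference image $f$ as a parameter and apply the Parametric Transversality Theorem to the families $f\mapsto j^2\hat\Phi(f)$ and $f\mapsto j^2\hat\Psi(f)$, and then to invoke the equivalence recorded just before the statement, that $j^2\hat\Phi(f)$ misses $\frak U$ if and only if it is transverse to every stratum $\frak U_i$. I set $V:=C^3(\R^n,\R)$, $M:=G$, $N:=J^2(G,\R)$ and consider the evaluation map $F^{ev}:V\times G\to J^2(G,\R)$, $F^{ev}(f,M):=j^2_M\hat\Phi(f)$, and likewise for $\hat\Psi$. Because $g$ has compact support, $\hat\Phi$ and $\hat\Psi$ are continuous linear maps from $C^3(\R^n,\R)$ into $C^3(G,\R)$ and differentiation under the integral (resp. sum) is legitimate, so $F^{ev}$ is regular enough and the smoothness hypotheses of the theorem hold; the order condition is harmless since the codimension of each $\frak U_i$ is large by the dimension estimate $\dim G+\dim\frak U_i<\dim J^2(G,\R)$ noted above.

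The real content is the transversality hypothesis $F^{ev}\pitchfork\frak U_i$. I would prove the stronger fact that the partial differential in the parameter direction, $\delta f\mapsto D_fF^{ev}(f_0,M_0)\,\delta f=j^2_{M_0}\hat\Phi(\delta f)$, is already onto the fibre of $J^2(G,\R)$ over $M_0$; together with the base directions obtained by moving $M$, this makes $DF^{ev}$ surjective, so that $F^{ev}$ is transverse to \emph{any} submanifold, in particular to each $\frak U_i$. As $\hat\Phi$ is linear in $f$, the three components of this differential are exactly the value, gradient and Hessian of the earlier Lemma, now read as linear functionals of $\delta f$. Expanding the group action through $\mu_{M_0}$ as in (\ref{two})--(\ref{three}) and writing $z=A_0x+t_0$ for $M_0=(A_0,t_0)$, these functionals are integrals (for $\Phi$), respectively finite sums (for $\Psi$), of $\delta f$, $\nabla\delta f$ and $\mbox{H}_{\delta f}$ at the points $z$, weighted by $g$ and by the monomials $1,\ z_j,\ z_iz_j$ of degree at most two that arise from $\Omega z+v$.

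For $\Psi$ this surjectivity is a finite-dimensional statement and is precisely where hypothesis (b) enters. The jets $(\delta f(z_i),\nabla\delta f(z_i),\mbox{H}_{\delta f}(z_i))$ at the distinct points $z_i=A_0x_i+t_0$ may be prescribed independently by bump functions, so non-surjectivity would produce a nonzero covector $(c_0,w,Q)$ annihilating the image. Comparing, for each $i$ with $g(x_i)\neq0$, the coefficients of $\delta f(z_i)$, of $\nabla\delta f(z_i)$ and of $\mbox{H}_{\delta f}(z_i)$ separately, this annihilation is equivalent to the vanishing at all such $z_i$ of a single polynomial of degree at most two built from $(c_0,w,Q)$. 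Since the space of these polynomials has dimension $k=\tfrac12(n+1)(n+2)$ and, by hypothesis (b), the $k$ points with $g(x_i)\neq0$ lie on no common quadric, the only such polynomial is zero; hence $(c_0,w,Q)=0$ and the differential is onto. For $\Phi$ the corresponding covector, after integration by parts (allowed because $g_M$ has compact support), yields a fixed differential operator of order at most two with polynomial coefficients annihilating the nonzero function $g_M$; hypothesis (a), $g\not\equiv0$, together with compact support, rules this out. I expect this last point to be the main obstacle: unlike the constant-coefficient case, a polynomial-coefficient operator need not act as a Fourier multiplier, so one must argue carefully that the moment-and-derivative functionals attached to $1,z_j,z_iz_j$ stay linearly independent against the single fixed weight $g_M$.

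Granting transversality, the Parametric Transversality Theorem gives that $\{f:\ j^2\hat\Phi(f)\pitchfork\frak U_i\}$ is residual, and open since each $\frak U_i$ is closed and $F$ is continuous for the strong topology. Taking the intersection over the finitely many strata $i=1,\dots,m$, and then intersecting the corresponding sets for $\hat\Phi$ and $\hat\Psi$, preserves residuality, density and openness. For $f$ in this set $j^2\hat\Phi(f)$ is transverse to every $\frak U_i$, so by Theorem 7.1 and the dimension estimate it misses $\frak U$ entirely; that is, $\Phi$ has no degenerate critical point, and the same argument applies to $\Psi$. This establishes the claim for a generic $f\in C^3_S(\R^n,\R)$.
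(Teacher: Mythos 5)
Your overall architecture matches the paper's: stratify the bad set $\frak{U}\subset J^2(G,\R)$ and show that for generic $f$ the jet map $j^2\hat{\Phi}(f)$ (resp.\ $j^2\hat{\Psi}(f)$) is transverse to, hence misses, each stratum, the key step being surjectivity of the $f$-derivative of the evaluation map onto the fibre of $J^2(G,\R)$. Your dual formulation of that surjectivity for $\Psi$ (an annihilating covector forces nonzero polynomials of degree at most two to vanish at the $k$ points $A_0x_i+t_0$, contradicting hypothesis (b) since affine maps preserve quadrics) is equivalent to the paper's primal argument that the vectors $g(x_i)\varphi(Ax_i+t)$, with $\varphi(x)=(1,x,xx^{\top})$, span the $k$-dimensional space $\R\times\R^n\times\mathrm{Sym}(n)$.

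There are, however, three genuine gaps. First, you invoke the Parametric Transversality Theorem with the parameter space $V=C^3(\R^n,\R)$; the theorem as stated (and as proved via Sard's theorem) requires a finite-dimensional parameter manifold, and it is not available off the shelf for an infinite-dimensional $V$. The paper repairs this by restricting to finite-dimensional affine families: $f+P^{2N}(n,1)$ for $\Psi$, and $f+\mathcal{P}_m$ for $\Phi$, where producing a single polynomial space $\mathcal{P}_m$ on which surjectivity holds uniformly for all $(A,t)\in G\cap K_r(0)$ is itself a nontrivial step (orthonormal polynomials on a cube containing $\mathrm{supp}(g)$ and a full-rank matrix $\tilde{M}$ independent of $(A,t)$). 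Second, the step you yourself flag as the main obstacle, surjectivity for $\Phi$, is left unproved. The paper closes it by observing that failure of surjectivity is exactly a linear dependence among the functions in $\mathcal{H}_{A,t}$ (derivatives of $g$ and of $g$ times monomials of degree at most two), i.e.\ a second-order linear PDE in divergence form satisfied by $g$; combined with compact support this forces $g\equiv 0$, contradicting hypothesis (a). Third, you do not address the passage from density in the weak topology to density in the strong topology on the noncompact domain $\R^n$, nor the noncompactness of $G$; the paper handles both via a Baire-category argument over the exhaustion of $G$ by the balls $K_r(0)$ together with a cut-and-paste modification of approximating sequences outside a compact set $K\subset\mathring{L}$ containing the relevant orbit of $\mathrm{supp}(g)$. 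Without these three ingredients the argument does not yet yield the stated conclusion in $C^3_S(\R^n,\R)$.
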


\begin{proof}
 Following the argumentation above, we define
\begin{eqnarray}
 A_{\Psi,i}:=\pitchfork^3(\R^n,\R;j^2\hat{\Psi},\frak{U}_i):=\left\lbrace f\in C^3(\R^n,\R)~\big|~j^2\hat{\Psi}(f)\pitchfork\frak{U}_i \right\rbrace \nonumber
\end{eqnarray}
and $A_{\Phi,i}:=\pitchfork^3(\R^n,\R;j^2\hat{\Phi},\frak{U}_i)$ respectively. We have to show that $A_{\Psi,i}$ and $A_{\Phi,i}$ are open and dense in $C^3_S(\R^n,\R)$.

Using the Jet Transversality Theorem provides for  the openess of $\pitchfork^3(G,\R;j^2,\frak{U}_i)$. Since $\hat{\Psi}$ and $\hat{\Phi}$ are continuous maps, we conclude that the sets $A_{\Psi,i}$ and $A_{\Phi,i}$ are open. Hence, we only have to show the densness of both sets.
Therefore, we define $$A_{\Psi,i,r}:=\pitchfork^3(\R^n,\R;j^2\hat{\Psi}|_{K_r(0)},\frak{U}_i)\mbox{~~~and~~~} A_{\Phi,i,r}:=\pitchfork^3(\R^n,\R;j^2\hat{\Phi}|_{K_r(0)},\frak{U}_i)$$ respectively, while $K_r(0)$ is an open ball with radius $r$ in $G$ with respect to the Frobenius norm. Again, we can use the Jet Transversality Theorem and the continuity of $\hat{\Phi}$ and $\hat{\Psi}$ to show that $A_{\Phi,i,r}$ and $A_{\Psi,i,r}$ are  countable intersections of open sets. Due to the
category theorem of Baire and the fact that $A_{\Psi,i}=\bigcap_{r=1}^{\infty}A_{\Psi,i,r}$, it is enough to show the denseness of $A_{\Psi,i,r}$ with respect to the strong topology. An analogous argumentation for $\hat{\Phi}$ shows that the denseness of $A_{\Phi,i,r}$ is sufficient to complete the proof.

Now, let $K,L\subset\R^n$ be two compact subset with 
\begin{eqnarray}
 \left\lbrace y\in\R^n~\big|~y=Ax+t,~x\in \mbox{supp}(g),~(A,t)\in K_r(0)\cap G\right\rbrace \subset K~~~\mbox{and}~~~K\subset\mathring{L} \nonumber
\end{eqnarray}
and $f\in C^3(\R^n,R)$. Suppose there is a sequence $\left\lbrace f_k\right\rbrace_{k\in\N}\subset A_{\Psi,i,r}$ which converges to $f$ with respect to the weak topologie. Then we can construct a sequence $\left\lbrace h_k\right\rbrace_{k\in\N}\subset A_{\Psi,i,r}$ with $h_k(x)=f_k(x)$ for all $x\in K$ and $h_k(x)=f(x)$ for all $x\in \R^n\setminus L$ such that $h_k\to f$ with respect to the strong topology. Since $f_k(x)=h_k(x)$ for all $x\in K$ implies $h_k\in A_{\Psi,i,r}$ we get that the denseness of $A_{\Psi,i,r}$ with respect to the weak topology implies the denseness with respect to the strong topology. Since the same statement is true for the set $A_{\Phi,i,r}$ we only have to show that both sets are dense in $C^3_W(\R^n,\R)$.


Now, let us complete the proof for the function $\hat{\Psi}$. Consider the vector space $V=\R\times\R^n\times\mbox{Sym}(n)$, endowed with the standard scalar-product $\left\langle ~.~\right\rangle :V\times V\to \R,$
\begin{eqnarray}
 \left\langle (\alpha,a,A),(\beta,b,B)\right\rangle =\alpha\beta+a^{\top}b+tr(AB)\nonumber
\end{eqnarray}
and the map 
\begin{eqnarray}
 \varphi:\R^n\to V~,~~~x\mapsto(1,x,xx^{\top}).\nonumber
\end{eqnarray}
Since a quadric hypersurface $Q$ is defined via 
\begin{eqnarray}
Q_{\alpha,a,A}=\left\lbrace
x\in\R^n~|~x^{\top}Ax+a^{\top}x+\alpha=0 \right\rbrace\nonumber   \end{eqnarray}
 for an arbitrary element $(\alpha,a,A)\in V\setminus \left\lbrace  0\right\rbrace $ we get the equivalence
\begin{eqnarray}
x\in Q_{\alpha,a,A}~\Leftrightarrow~\left\langle (\alpha,a,A),\varphi(x)\right\rangle=0. \nonumber
\end{eqnarray}
Hence, using the condition that $k=\frac{1}{2}(n+1)(n+2)$ elements of the sequence $\left\lbrace x_i\right\rbrace_{i=1}^{N}$ do not lie on a quadric hypersurface and that $g(x_i)\neq 0$ for those elements, we get 
\begin{eqnarray}
 \mbox{span}\left\lbrace g(x_1)\varphi(x_1),\ldots,g(x_N)\varphi(x_N)\right\rbrace =V.\nonumber
\end{eqnarray}
 and a short calculation shows that also 
\begin{eqnarray}
 \mbox{span}\left\lbrace g(x_1)\varphi(Ax_1+t),\ldots,g(x_N)\varphi(Ax_N+t)\right\rbrace =V.\nonumber
\end{eqnarray}
is valid for each $(A,t)\in G$.
Hence, the linear map generated by the matrix
\begin{eqnarray}
 M=\Big( g(x_1)\varphi(Ax_1+t),\ldots,g(x_N)\varphi(Ax_N+t)\Big) \nonumber
\end{eqnarray}
is surjective for each $(A,t)\in G$. 

Now, consider formula (\ref{disccoefs}) as a linear map of the form 
\begin{eqnarray}
 (j^2_{Ax_1+t}f,\ldots,j^2_{Ax_N+t}f)\mapsto(\alpha,\beta,\gamma,\epsilon,\delta).\label{formel_dif}
\end{eqnarray}
Let $m_1,\ldots,m_k$ denote the rows of $M$, i.e. $M^{\top}=(m_1^{\top},\ldots,m_k^{\top})$. Then, the $m_1,\ldots,m_k$ can be used to built the rows of the representation matrix of (\ref{formel_dif}) in the following way:
\begin{align}
 &\alpha_i=m_1\cdot (\frac{\partial f}{\partial x_i}(Ax_1+t),\ldots,\frac{\partial f}{\partial x_i}(Ax_N+t))^{\top},\nonumber\\
&\beta_{i,1}=m_2\cdot (\frac{\partial f}{\partial x_i}(Ax_1+t),\ldots,\frac{\partial f}{\partial x_i}(Ax_N+t))^{\top}\nonumber\\
&~~~~~~~~~~~~~~~~~~~~~~~\vdots\nonumber\\
&\delta_{n,n,i,j}=m_k\cdot (\frac{\partial^2 f}{\partial x_i\partial x_j}(Ax_1+t),\ldots,\frac{\partial f}{\partial x_i\partial x_j}(Ax_N+t))^{\top}\nonumber
\end{align}
 Therefore, the map (\ref{formel_dif}) is surjective and with the formulas (\ref{begin_newton_SE(n)}) and (\ref{end_newton_SE(n)}) for $G=SE(n)$, and (\ref{f0})-(\ref{f6}) for $G=SA(n)$ respectively, the map
\begin{eqnarray}
 (j^2_{Ax_1+t}f,\ldots,j^2_{Ax_N+t}f)\mapsto j^2_{(A,t)}\hat{\Psi}(f).\label{form_dif2}
\end{eqnarray}
is surjective for each chosen $(A,t)\in G\cap K_r(0)$. Moreover, the map (\ref{form_dif2}) is transverse to every submanifold of $J^2(G,\R)$. 

Now, let $f\in C^3(\R^n,\R)$ be arbitrarily chosen. To show the denseness of $A_{\Psi,i,r}$ in terms of the weak topology, it is enough to show that $A_{\Psi,i,r}\cap [f+P^{2N}(n,1)]$ is dense. Here, $P^{2N}(n,1)$ denotes the set of all polynomials $\R^n\to\R$ with a degree smaller than or equal to $2N$. In the case of the map 
\begin{eqnarray}
 F^{\mbox{ev}}:[f+P^{2N}(n,1)]\times G\to J^2(G,\R)\nonumber\\
(f+p,A,t)\mapsto j^2_{(A,t)}\hat{\Psi}(f+p)\nonumber
\end{eqnarray}
we already showed the surjectivity if $(A,t)\in G$ is fixed. Hence, using the Parametric Transversality Theorem we get that $F^{\mbox{ev}}(f+p,~.~)$ is transverse to $\frak{U_i}$ for a dense subset of $P^{2N}(n,1)$. Therefore, $A_{\Psi,i,r}$ is dense in $C^3_W(\R^n,\R)$ which completes the proof for the cost function $\Psi$.

Let us now consider the cost function $\Phi$. Following the argumentation above,
it is enough to prove that $A_{\Phi,i,r}$ is dense in $C^3_W(\R^n,\R)$. Due to the conditions for the template image $g$, the set of functions 
\begin{align}
 \mathcal{H}_{A,t}=
\left\lbrace \frac{\partial}{\partial x_i}g(x),
\frac{\partial}{\partial x_i}\big(g(x)(Ax+t)_k\big),
\frac{\partial^2}{\partial x_i \partial x_j}g(x), 
\frac{\partial^2}{\partial x_i \partial x_j}\big(g(x)(Ax+t)_k\big),\right. \nonumber\\
\left.  
\frac{\partial^2}{\partial x_i \partial x_j}\big(g(x)(Ax+t)_k (Ax+t)_l\big)~~~\Big|~~~ i,j,k,l=1,\ldots,n,~j\leqslant i,~l\leqslant k 
\right\rbrace \nonumber
\end{align}
 is linear independent. Otherwise, $g$ would fullfil a partial differential equation, and, since $g$ has compact support, this would imply that $g\equiv 0$, which contradicts the requirements for $g$. To simplify the notation, we take an arbitrary order of $\mathcal{H}_{A,t}$ and write $h_i(x)$ for its elements, $i=1,\ldots,\tilde{k}$, $\tilde{k}:=\frac{1}{4}n(n+1)(6+3n+n^2)$. Now, let $Q\subset\R^n$ be a cube containing $\mbox{supp}(g)$. Then, define a set of orthonormal polynomials $\left(  b_j(x)\right) _{j\in\N} $ with respect to the $L_2$-norm on $Q$, which is in ascend order with respect to the
degree. Moreover, define $\mathcal{P}_m:=\left\langle b_1(x),\ldots,b_m(x)\right\rangle $. Thus, the best $L_2$ approximation of $h_i(x)\in \mathcal{H}_{A,t}$ with polynomials up to a certain degree is given by
\begin{align}
 h_{i,m}=\sum\limits_{j=1}^ma_{i,j}b_j(x)\nonumber
\intertext{with}
a_{i,j}=\int_Qb_j(x)h_i(x)dx.\nonumber
\end{align}

Now, consider the matrix
\begin{align}
\tilde{M}= \left( 
\begin{array}{ccc}
 a_{1,1} & \cdots & a_{1,m}\\
\vdots & & \vdots\\
a_{\tilde{k},1}& \cdots & a_{\tilde{k},m}
\end{array}
\right). \nonumber
\end{align}
Since $\mathcal{H}_{A,t}$ is linear independent, there exists a $m\in\N$ such that all rows of $\tilde{M}$ 
are also linear independent. Due to the fact that $\left\langle\right.  \mathcal{H}_{A,t}\left. \right\rangle=\left\langle\right.  \mathcal{H}_{\tilde{A},\tilde{t}}\left. \right\rangle$ for all $(A,t),(\tilde{A},\tilde{t})\in G$, this number $m$ is independent of the choise of $(A,t)$. Since $\tilde{M}$ is the representation matrix of the map
\begin{eqnarray}
 \mathcal{P}_m\to\R^{\tilde{k}},~~~f\mapsto (\alpha,\beta,\gamma,\delta,\epsilon),\label{nn1}
\end{eqnarray}
we get the surjectivity of (\ref{nn1}) for each $(A,t)\in G$, if $m$ is big enough. Hence, using the formulas (\ref{begin_newton_SE(n)}) and (\ref{end_newton_SE(n)}) for $G=SE(n)$, and (\ref{f0})-(\ref{f6}) for $G=SA(n)$ respectively, one can easily verify that $j^2_{(A,t)}\hat{\Phi}\big|_{\mathcal{P}_m}$ is surjective for each chosen $(A,t)\in G\cap K_r(0)$.

Now, let $f\in C^3(\R^n,\R)$ be arbitrarily chosen. Like in the case of the cost function $\Psi$ before, we apply the Parametric Transversality Theorem to the map
\begin{eqnarray}
 F^{\mbox{ev}}:[f+\mathcal{P}_m]\times G\to J^2(G,\R)\nonumber\\
(f+p,A,t)\mapsto j^2_{(A,t)}\hat{\Phi}(f+p).\nonumber
\end{eqnarray}
Since $F^{\mbox{ev}}(~.~,A,t)$ is surjective for all $(A,t)\in G\cap K_r(0)$, the map $F^{\mbox{ev}}(f+p,~.~)$ is transverse to $\frak{U_i}$ for a dense subset of $\mathcal{P}_m$. Therefore, $A_{\Phi,i,r}$ is dense in $C^3_W(\R^n,\R)$ which completes the proof for the cost function $\Phi$.
\end{proof}

Finally, we want to note that the condition \textit{b)} of Theorem 7.4 is not very restrictive. Since the Region of Interest $Q$ is typically bounded, there exists a minimum value $m_g\in\R$ of $g$. Hence, we can consider the registration-problem
\begin{eqnarray}
 \min\limits_{(A,t)\in G}\int_{Q}(\tilde{g}(x)-\tilde{f}(Ax+t))^2dx\nonumber
\end{eqnarray}
 with $\tilde{g}(x)=g(x)+m_g+1$ and $\tilde{f}(x)=f(x)+m_g+1$ instead of (\ref{leastsquares}). For that new registration problem we have $\tilde{g}(x)\neq 0$ for all $x\in Q$.

With this result we can guarantee with a probability of one that the QMC-algorithm localy converges quadratically to a local maximum in the case of non-artificial generated images. However, Theorem 7.4 will not give any result for the idealistic case, in which the reference and template image are identical. Moreover, the case in which both images are elements of a spline function space, or smoothed by a Gaussian kernel, is not  applicable to this theorem. Even though these restrictions are more relevant  for applications, we believe that a further discussion in this direction is beyond the scope of this paper.

\section*{Acknowledgment}
This work has been supported by the Interdisciplinary Center for Clinical Research (IZKF) through the project F-37-N (Organ Tracking).

\bibliographystyle{siam}
\bibliography{640}

\end{document}